\newtheorem{proposition}{Proposition}
\newtheorem{lemma}{Lemma}
\newtheorem{corollary}{Corollary}
\theoremstyle{definition}
\newtheorem{definition}{Definition}
\newtheorem{notation}{Notation}
\newtheorem{convention}{Convention}
\newtheorem{hypothesis}{Hypothesis}
\theoremstyle{remark}
\newtheorem{remark}{Remark}
\newtheorem{example}{Example}
\DeclarePairedDelimiterXPP{\normsup}[1]{}{\|}{\|}{_{\infty}}{#1}
\DeclarePairedDelimiterXPP{\generalnorm}[2]{}{\|}{\|}{_{#1}}{#2}
\DeclarePairedDelimiter{\opnorm}{\vvvert}{\vvvert}
\DeclarePairedDelimiterXPP{\normlip}[1]{}{\|}{\|}{_\mathrm{Lip}}{#1}
\DeclarePairedDelimiter{\bkt}{\langle}{\rangle}
\DeclarePairedDelimiter{\abs}{|}{|}
\DeclarePairedDelimiter{\lie}{[}{]}
\DeclarePairedDelimiter{\floor}{\lfloor}{\rfloor}
\DeclarePairedDelimiter\Paren()
\DeclarePairedDelimiter\Bra[]
\DeclarePairedDelimiterXPP{\lieG}[1]{}{[}{]}{_{\prodG}}{#1}
\newcommand{\rTT}{\mathbb{T}_+}
\DeclarePairedDelimiterX\Graft[1]\langle\rangle{#1}
\newcommand{\prodB}{\mathbin{\blacktriangleright}}
\newcommand{\prodG}{\mathbin{\smalltriangleright}}
\DeclareMathOperator{\expG}{\exp_{\prodG}}
\DeclareMathOperator{\logG}{\log_{\prodG}}
\DeclarePairedDelimiterXPP{\Fact}[1]{}{(}{)}{!}{#1}
\newcommand{\vd}{\,\mathrm{d}}
\newcommand{\dD}{\mathrm{D}}
\newcommand{\ba}{\mathbf{a}}
\newcommand{\bx}{\mathbf{x}}
\newcommand{\by}{\mathbf{y}}
\newcommand{\btau}{\boldsymbol{\tau}}
\newcommand{\bsigma}{\boldsymbol{\sigma}}
\newcommand{\blambda}{\boldsymbol{\lambda}}
\newcommand{\binBCHDG}{\mathbin{\circledast_{\prodG}}}
\newcommand\given{\nonscript\:\delimsize\vert\nonscript\:\mathopen{}} 
\newcommand\SetSymbol[1][]{\nonscript\:#1\vert\nonscript\:\mathopen{}\allowbreak}
\DeclarePairedDelimiterX\Set[1]\{\}{%
\renewcommand\given{\SetSymbol[\delimsize]}#1}
\DeclareMathOperator{\grandO}{O}
\newcommand{\ind}[1]{\mathbbm{1}_{#1}}
\DeclareMathOperator{\id}{\mathrm{id}}
\newcommand{\NN}{\mathbb{N}}
\newcommand{\RR}{\mathbb{R}}
\newcommand{\TT}{\mathbb{T}}
\newcommand{\uT}{\mathrm{T}}
\newcommand{\uU}{\mathrm{U}}
\newcommand{\uV}{\mathrm{V}}
\newcommand{\ug}{\mathrm{g}}
\newcommand{\cC}{\mathcal{C}}
\newcommand{\cCb}{\mathcal{C}_{\mathrm{b}}}
\newcommand{\cT}{\mathcal{T}}
\newcommand{\cA}{\mathcal{A}}
\newcommand{\cL}{\mathcal{L}}
\newcommand{\ci}{\mathcal{i}}
\newcommand{\cD}{\mathcal{D}}
\newcommand{\cW}{\mathcal{W}}
\newcommand{\cWln}{\mathcal{W}_{\leq n}}
\newcommand{\cI}{\mathcal{I}}
\newcommand{\cF}{\mathcal{F}}
\begin{document}

\title{Constructing
    General\\
    Rough Differential Equations\\
through Flow Approximations}

\author{Antoine Lejay\thanks{Université de Lorraine, CNRS, Inria, IECL, F-54000 Nancy, France, \texttt{antoine.lejay@univ-lorraine.fr}}}

\date{December 8, 2021}

\maketitle

\begin{abstract}
    The non-linear sewing lemma constructs flows 
    of rough differential equations from a broad class of approximations called almost flows. 
    We consider a class of almost flows that could be approximated by solutions
    of ordinary differential equations, in the spirit of the backward error analysis.
    Mixing algebra and analysis, a Taylor formula with remainder and a composition formula
    are central in the expansion analysis.
    With a suitable algebraic structure on the non-smooth vector fields to be integrated, 
    we recover in a single framework several results regarding high-order
    expansions for various kinds of driving paths. 
    We also extend the notion of driving rough path. We introduce as an example
    a new family of branched rough paths, called aromatic rough paths modeled
    after aromatic Butcher series.
\end{abstract}


\textbf{Keywords: } Rough differential equations; Branched rough paths; Aromatic Butcher series. 

\section{Introduction}

Introduced at the end of the 1990s by T.~Lyons \cite{lyons98a}, the theory of
rough paths defines pathwise solutions to stochastic differential equations
driven by Brownian paths and more generally by a large class irregular paths,
random or deterministic.  The core idea is that the Brownian paths need to be
lifted as paths living in a non-commutative truncated tensor algebra. For
general, irregular path, the order of truncation to consider depends on the
regularity of the path. 

Rough differential equations (RDE) generalize the notion of controlled
differential equations in which the controls, or driving paths, are irregular
in time, \textit{e.g., } $\alpha$-Hölder.  Such paths are called \emph{rough
paths}.  Since the seminal work \cite{lyons98a}, several concurrent
constructions have been given to the existence of solutions of~RDE: by solving
a fixed point \cite{lyons98a,lyons07a,lejay_victoir}, as limit of discrete
approximations by A.~M.~Davie~\cite{davie05a}, by approximating the driving
paths with geodesics, by P.~Friz and N.~Victoir \cite{friz2008,friz}, or by
approximating the flows by I.~Bailleul \cite{bailleul12a,bailleul13b}.

In the recent series of works \cite{brault1,brault2,brault3}, we have studied
the various properties of the non-linear sewing lemma which consider an
abstract family of approximation of flows. This extend the results in
\cite{bailleul12a,bailleul13b} and also encompass the approaches from
A.M.~Davie and P.~Friz \& N.~Victoir \cite{davie05a,friz}.  The main point of
the non-linear sewing lemma is to construct approximations of flows which are
\textquote{rectified} as flows.  

A rough path of order $n$ is a path living in a truncated tensor algebra
$\uT_{\leq n}(\uU):=\bigoplus_{k=0}^n \uU^{\otimes k}$.  A broad family of rough paths may be
constructed from approximating smooth paths through by their iterated integrals.  It shares
the same algebraic properties as the iterated integrals that K.T.~Chen pointed
out in the 1950s \cite{chen57}. Some controls of the regularity are also needed
in accordance with the grading of the tensor space.  In view of performing
integration, several alternative and extension of rough paths have been given.
The first one is that of \emph{controlled rough paths} by
M.~Gubinelli~\cite{gub04}.  Later, M. Gubinelli introduced the notion of
\emph{branched rough paths} \cite{gubinelli10a} which are encoded through
trees. 

The notion of branched rough paths and corresponding RDE has subsequently been 
studied in \cite{hairer_kelly,1810.12179,boedihardjo18a,1712.01965,1604.07352}. 
In \cite{1604.07352}, a solution to RDE driven by branched rough paths have been
given that extends the one of A.M.~Davie: a solution is a path $y:[0,T]\to\uV$ characterized by 
\begin{equation}
    \label{eq:intro:2}
    \abs{y_t-\phi_{t,s}(y_s)}\leq C\abs{t-s}^\theta\text{ for some }\theta>1
\end{equation}
for a proper approximation $\phi_{t,s}$ constructed using the \emph{elementary 
differentials} already used in the context of the studies of B-series  
and Runge-Kutta scheme \cite{hairer_10a}. The article~\cite{2003.12626}
study a Runge-Kutta scheme for RDE, while~\cite{2002.10432,1801.02964}
use algebraic results on Taylor series, B-series and the Faà di Bruno formula 
as a core. A variant to branched rough paths consists in using planar trees~\cite{1804.08515}.
While these articles focus mainly on the properties of the driving paths, the algebraic 
properties of the vector fields play a large role here.

In a generic way, we can see a rough path as a path $\bx:[0,T]\to\cW$ taking 
its values in a sub-group of an algebra $\cW$ (either a tensor algebra or an 
algebra of trees). Equivalently, we can see $\bx$ as a $\cW$-valued family $\Set{\bx_{s,t}}_{s\leq t}$
satisfying the \emph{Chen relation} 
\begin{equation}
    \label{eq:intro:chen}
    \bx_{r,s}\bx_{s,t}=\bx_{r,t}\text{ for any }r\leq s\leq t.
\end{equation}

Let us now consider an algebra homomorphism $F$ from $\cW$ to 
the space $\cC$ of continuous functions from a Banach space $\uV$ into itself with compositions. 
We assume that $F[1]=\id$, the identity map.
Such a $F$ is called a \emph{character}.
The Chen relation~\eqref{eq:intro:chen} translates into 
\begin{equation}
    \label{eq:intro:chen:2}
    F[\bx_{s,t}] = F[\bx_{r,s}]\circ F[\bx_{s,t}]
\end{equation}
This relation implies that $F[\bx_{s,t}](a)$ is a \emph{flow}.

The driving path is not necessarily constructed as satisfying the Chen relation
in~$\cW$, but for example in a quotient space $\widetilde{\cW}:=\cW/\cI$ for an ideal $\cI$.
Truncated tensor algebras are of this type.
In such a case, \eqref{eq:intro:chen:2} is no longer true.  
However, there are situations in which it is \textquote{close} to be true in the sense
that for some $\theta>1$, 
\begin{equation}
    \label{eq:intro:3}
    \abs{F[\bx_{s,t}](a)-F[\bx_{r,s}]\circ F[\bx_{s,t}](a)}\leq C\abs{t-s}^\theta
    \text{ for any }s\leq t,\ a\in\uV.
\end{equation}
With a few additional assumptions on $F[\bx_{r,s}]$, the non-linear sewing 
lemma \cite{bailleul12a,brault1,brault2} shows that there exists a flow
$\Set{\phi_{t,s}}_{s\leq t}$ (\textit{i.e.}, $\phi_{t,s}\circ\phi_{s,t}=\phi_{t,r}$
for any $r\leq s\leq t$) 
close to $\Set{F[\bx_{s,t}]}_{s\leq t}$. Hence, the path $y:[0,T]\to\uV$
defined by $y_t=\phi_{t,s}(a)$ actually satisfies \eqref{eq:intro:2}.
Thus, $y$ is a suitable object for being a solution to a generalized
notion of differential equation.

Given a $\uT_{\leq n}(\uU)$-valued rough path $\bx$ with components $\bx^{(i)}\in\uU^{\otimes i}$, $i=0,\dotsc,n$, 
as well as a function $f:\uV\to L(\uU,\uV)$, a Taylor expansion leads us to naturally consider 
\begin{equation}
    \label{eq:intro:davie}
    F[\bx_{s,t}](a) =\sum_{k=0}^n f^{\{k\}}(a)\bx^{(k)}_{s,t}\text{ for any }s\leq t, 
\end{equation}
where $f^{\{k+1\}}=\dD f^{\{k\}}\cdot f$ and $f^{(0)}=\id$.
Such an approximation was considered first in \cite{davie05a} for $n=2$, and then in \cite{friz2008,2002.10432}
for arbitrary orders $n$. 
In particular, in \cite{friz2008}, it is shown that thanks to an astute use 
of the Davie lemma and sub-Riemannian geodesic approximations, \eqref{eq:intro:3}
holds with $\theta=(n+1)/p$ for a $(1/p)$-Hölder rough path with $n\geq \floor{p}$.
The effect of the approximation on the constant $C$ in \eqref{eq:intro:2}
was studied in \cite{boedihardjo,boedihardjo16b}.
Generalizations to branched rough paths with trees of order $n$ also lead to 
similar controls \cite{boedihardjo18a,1604.07352}.

Another approximation is given by setting $F[\bx_{s,t}](a)=z^{s,t}_1$ where $z^{s,t}$ is
the solution to the ordinary differential equation (ODE)
\begin{equation}
    \label{eq:intro:bailleul}
    z^{s,t}_r=a+\int_0^r F[\blambda_{s,t}](z^{s,t}_u)\vd u,\ r\in[0,1]
    \text{ with }\blambda_{s,t}=\log(\bx_{s,t})
    \text{ in }\widetilde{\cW}.
\end{equation}
The approach is developed in \cite{bailleul12a,bailleul15a,bailleul13b}.

When $\phi_{t,s}$ is seen as an approximation, the kind of control in \eqref{eq:intro:3}
is related to \emph{consistency} of the scheme. Under extra hypothesis, the rate
of the numerical Euler scheme obtained by composing the $\phi_{t,s}$ over small time intervals, 
may be deduced from the knowledge of $\theta$ \cite{friz,brault3,boutaib}.

It is important to point out that naive computations do not lead to a proper
assessment of $\theta$ when $n\geq 3$. They actually lead to $\theta=3/p$ whatever 
the order $n$ of the expansion. To reach a rate that depends on $n$, one really needs 
computations of \emph{algebraic nature}. In particular, for $n\geq 3$, 
we are limited to use \emph{weak geometric rough paths}, which 
are obtained as the limits of smooth rough paths naturally lifted
through their iterated integrals. However, it was shown first in \cite{lejay_victoir}
that for the order of regularity $p\in[2,3)$, and then in \cite{hairer_kelly}
and studied subsequently in \cite{1810.12179,1712.01965} using
branched rough paths that one could reduced non geometric rough paths
to geometric ones through suitable isomorphisms.

Beyond the algebraic aspect, another issue raised to get proper controls on \eqref{eq:intro:3}
is to manage the fact that in an expression like \eqref{eq:intro:davie}
involves functions of different order of regularity when $f$ is not smooth. 
One should really take care of the regularity of the functions when 
expanding then through Taylor approximations to keep only the relevant terms.

To overcome these difficulties, we consider an approach based on a  \textquote{backward error analysis}~\cite{reich}.
We consider solving an ordinary differential equation (ODE) of type 
\begin{equation}
    \label{eq:intro:6}
    y_t[\alpha](a)=a+\int_0^t F[\alpha](y_s[\alpha])\vd s,
\end{equation}
for a suitable $\alpha\in\widetilde{\cW}$ which we related to some $\beta\in\widetilde{\cW}$
through 
\begin{equation}
    \label{eq:intro:5}
    F[\beta](a)=y_1[\alpha](a)+\text{ remainder}.
\end{equation}
Or course, $\beta$ and $\alpha$ are linked by a transform of type exponential/logarithmic.
We actually prove a stronger statement: 
\begin{equation*}
    F[\gamma](y_1[\alpha](a))=F[\beta(\alpha,\gamma)](a)+\text{ remainder}
\end{equation*}
for some $\beta(\alpha,\gamma)\in\widetilde{\cW}$. This is the key to control
over composition of type $F[\alpha]\circ F[\beta]$, and the to rectify $\Set{F[\bx_{s,t}]}_{s\leq t}$ as a
flow using the non-linear sewing lemma. An expression of type \eqref{eq:intro:5} is obtained 
by iterating the use of the Newton formula. This connects~\eqref{eq:intro:bailleul} and~\eqref{eq:intro:davie}.

In our context, we then consider that $F$ from $\widetilde{\cW}$ to the space
of continuous functions not as a character in the space of continuous functions
with composition, but as satisfying a relation of type 
\begin{equation*}
    \dD F[\beta]\cdot F[\alpha]=F[\alpha\beta]
\end{equation*}
for a suitable $\alpha$, where $\dD$ is the Fréchet derivative. We call such a map a Newtonian map.

Alternatively, we may consider $F$ as an algebra morphism from $\widetilde{\cW}$
to the algebra of differential operators. 
To use the Newton formula, $F[\alpha]$ shall be a first-order differential
operator. This imposes some limitations on the kind of $\alpha$ to 
consider, which typically shall belong to a Lie algebra. This explains
why weak rough paths~\cite{lyons02b,friz} shall be considered when dealing with high-order expansions
(however, the correspondence between geometric and non-geometric rough paths
stated above may be used \cite{hairer_kelly,bruned20a}).

In this article,
\begin{itemize}[noitemsep,topsep=-\parskip,partopsep=0pt,leftmargin=2em]
    \item We give a Taylor formula with remainder for approximating 
	flows of ODE with non-smooth vector fields defined as algebra
	homomorphisms from $\cW$ to an algebra of functions or of differential operators.
    \item We also provide a composition formula from which we derive
	inequalities of type~\eqref{eq:intro:3}. The composition formula 
	is a key statement to use the Davie lemma as a substitute
	to the Gronwall lemma and therefore construct to almost flows from flows.
    \item We extend the notion of a driving rough paths as a path
	with values in the truncated algebra $\cW_{\leq n}$ derived from quotienting graded algebra $\cW$ 
	up to a given order $n$. The latter result may easily be generalized by replacing $\cW_{\leq n}$
	to a quotient of $\cW$. This is useful when $\cW$ is the superposition of algebras. 
    \item We show how our results encompass some classical results about
	ODE, mainly the Lie-Trotter formula as well as controls of type \eqref{eq:intro:3}
	for rough differential. 
    \item We obtain decaying (in $n$) bounds similar to the ones of \cite{boedihardjo}
	on the constant in \eqref{eq:intro:3} even for driving path living 
	in an infinite dimensional Banach space as we do not rely on using
	geodesic approximations.
    \item Finally, we introduce a novel notion of \emph{aromatic rough paths}. 
	This class contains the class branched rough paths which itself contains the class of tensor-valued rough paths.
	This is a natural extension to the context
	of rough path of
aromatic Butcher series introduced to construct affine equivariant numerical integrators
	by Munthe-Kaas \textit{et al.} \cite{bogfjellmo,MT2016,MR3510021}.
	\end{itemize}

Here, we focus mostly on constructing suitable vector fields, from which we deduce 
the properties a driving path shall satisfy. The literature mentioned above
focus first on a suitable structure for the driving path.
RDE are defined afterwards.

Beyond aromatic rough paths, other rough paths of similar nature and encoded by
trees could be considered. The Grossmann-Larson (or Cayley) algebra
\cite{grossman} encodes the way differential operators acts on themselves and
leads to a slightly different structure than the one used for the aromatic
B-series. However, one may expect that all the structures lead to similar
results. The prevalence of trees in all these representations is
understood by the fact that trees are convenient ways to encode 
both operations on iterated integrals and on composition of differential operators. 
Aromatic trees exhibit some universal properties \cite{2002.05718}, so that
we could think they lead to the broadest natural class of branched rough paths.
The relationship between these different constructions will be subject to
future work.

\textbf{Outline.} 
In Section~\ref{sec:taylor}, we present the Taylor and the composition formula 
with remainder as well as the suitable algebraic setup related to Newtonian
maps and operators.
In Section~\ref{sec:flow}, we gives a general construction of flows from
rough paths, and gives the suitable controls. 
We show how to apply these results to ODE in Section~\ref{sec:ode}
and to RDE in Section~\ref{sec:rde}. Section~\ref{sec:BRP}
is devoted to branched rough paths, and the  construction of
aromatic branched rough paths.

\section{The Taylor and the composition formula with remainder}

\label{sec:taylor}

\subsection{The dual nature of ODEs}

Let $\uV$ be a Banach space. The Fréchet derivative operator is denote by $\dD$.
As usual, we denote by $\cC^k(\uV,\uV)$ the space of functions which are continuously differentiable
from $\uV$ to $\uV$ with derivatives of order $k\geq0$.

Let $\cL$ be a set of elements, considered as letters.  
Let $F$ be a map from $\cL$ to the class of Lipschitz functions from $\uV$ to $\uV$.
The ODE 
\begin{equation}
    \label{eq:ode:1}
    y_t[\alpha](a)=a+\int_0^t F[\alpha](y_s[\alpha](a))\vd s\text{ for } t\geq 0
\end{equation}
has a unique $\uV$-valued continuous solution $y[\alpha](a)$ for any $\alpha\in\uV$. 

The Newton formula applied on a function $g\in\cC^1(\uV,\uV)$ yields that 
\begin{equation}
    \label{eq:ode:2}
    g(y_t[\alpha](a))=g(a)+\int_0^t (\dD g(y_s[\alpha](a))\cdot F[\alpha](y_s[\alpha](a))\vd s\text{ for }t\geq 0.
\end{equation}

We define the first order differential operator $F^\dag[\alpha]$ by 
\begin{equation}
    \label{eq:ode:3}
    F^\dag[\alpha]g(a):=\dD g(a)\cdot F[\alpha](a)\text{ for } a\in\uV,\ g\in\cC^1(\uV,\uV),
\end{equation}
as well as the linear differential operator 
\begin{equation}
    \label{eq:ode:9}
    Y_t[\alpha]g(a):=g(y_t[\alpha](a))\text{ for }a\in\uV,\ g\in\cC^0(\uV,\uV). 
\end{equation}

The Newton formula \eqref{eq:ode:2} is then rewritten 
\begin{equation*}
    Y_t[\alpha]g(a)=g(a)+\int_0^t Y_s[\alpha]F^\dag[\alpha]g(a)\vd s\text{ for } a\in\uV,\ g\in\cC^1(\uV,\uV).
\end{equation*}
In the functional form, this means the linear equation
\begin{equation}
    \label{eq:ode:4}
    Y_t[\alpha]=1 +\int_0^t Y_s[\alpha]F^\dag[\alpha]\vd s, 
\end{equation}
where $1$ is the neutral operator $1g=g$ for $g\in\cC^1(\uV,\uV)$.

Conversely, given a differential operator $F^\dag[\alpha]$ with smooth coefficients,   
a Picard approximation applied to \eqref{eq:ode:4} leads to 
\begin{equation}
    \label{eq:ode:5}
    Y_t[\alpha]=1+tF^\dag[\alpha]+\frac{t^2}{2!}F^\dag[\alpha]^2+\frac{t^3}{3!}F^\dag[\alpha]^{3}+\dotsb
    \text{ for }t\geq 0,
\end{equation}
where $F^\dag[\alpha]^k:=F^\dag[\alpha](F^\dag[\alpha]^{k-1})$ for $k\geq 2$.
For convenience, we set $F^\dag[\alpha]^0=1$.

We now assume that $\alpha\mapsto F^\dag[\alpha]$ is a homomorphism from an algebra $\cW\supset \cL$
to the algebra  $\cD$ of differential operators with smooth coefficients. We rewrite the expansion~\eqref{eq:ode:5}
as 
\begin{equation}
    \label{eq:ode:6}
    Y_t[\alpha]=F^\dag[\exp(t\alpha)]\text{ for }t\geq 0\text{ with }\exp(t\alpha)=\sum_{k\geq 0}\frac{t^k}{k!}\alpha^k.
\end{equation}

Let $\ci$ be the identity map from $\uV$ to $\uV$. 
When $F^\dag[\alpha]$ and $F[\alpha]$ are related by  \eqref{eq:ode:3}, 
then $Y_t[\alpha]g=g(y_t[\alpha])$, $t\geq 0$, where $y[\alpha]$ solves \eqref{eq:ode:1}
for any starting point $a\in\uV$. 
As soon as $k\geq 2$, $F^\dag[\alpha]^k$ is in general a differential operator 
of degree~$k$.

Alternatively, when dealing with functions $F[\alpha]$ instead of the differential form $F^\dag[\alpha]$,
the condition $F^\dag[\alpha\beta]=F^\dag[\alpha]F^\dag[\beta]$ is replaced by the following
condition on the Lie derivative:
\begin{equation}
    \label{eq:ode:8}
    F[\alpha\beta](a)=\dD F[\beta](a)\cdot F[\alpha](a)\text{ for }a\in\uV,\ \beta\in\cW,\ \alpha\in\cL\cap\cW.
\end{equation}

Comparing \eqref{eq:ode:6} with \eqref{eq:ode:9} means that $Y_t[\alpha]$ may 
be understood as either a non-linear function acting on functions by composition 
or as a differential operator, hence the dual nature. In the context 
of numerical approximations, this duality is expressed through the notion
of S-series, which generalizes the notion of B-series~\cite{murua}.
Considering functions circumvent the constraint that $F^\dag[\alpha]$ shall be first 
order differential operator.

\subsection{Setup for vector fields with non-smooth coefficients}

The expansion in \eqref{eq:ode:6} is only valid if the function $F[\alpha]=F^\dag[\alpha]\ci$
is smooth. This is not the case in many practical situations. We present a setup 
to deal with regular yet not smooth functions.

\begin{hypothesis}
    \label{hyp:100}
    The algebra $\cW$ is a graded, unital algebra $\cW=\bigoplus_{k\geq 0}\cW_k$
    with $\cW_0\simeq \RR$ and $ab\in\cW_{i+j}$ whenever $a\in\cW_i$ and $b\in\cW_j$.
    The neutral element $1$ belongs to $\cW_0$.
\end{hypothesis}
\begin{notation}
    Fix $n\geq 0$. 
    We define $\cW_{\leq n}:=\bigoplus_{k=0}^n \cW_{k}$ and $\cW_{>n}:=\bigoplus_{k\geq n+1} \cW_k$.
    Since $\cW_{>n}$ is a two-sided ideal with respect to the multiplication of $\cW$, 
    $\cW_{\leq n}$ is identified with the quotient space $\cW/\cW_{>n}$. 
\end{notation}
\begin{notation}
    We decompose $\alpha\in\cW$ as $\alpha=\sum_{i=0}^{+\infty} a^{(i)}$ with $\alpha^{(i)}\in\cW_i$, $i\in\NN$.
\end{notation}
\begin{notation}[Relations $\prodG$ and $\prodB$]
    For $\alpha,\beta\in\cW$, we define
    \begin{align}
	\alpha\prodG\beta&:=\sum_{0\leq i+j\leq n}\alpha^{(i)}\beta^{(j)}\in\cW_{\leq n}
	\\
	\text{ and }
	\alpha\prodB\beta&:=\sum_{i+j\geq n+1}\alpha^{(i)}\beta^{(j)}\in \cW_{>n}.
    \end{align}
\end{notation}
\begin{convention}
    \label{conv:1}
    For non-associative algebras, these products are always applied from right to left, for example~$\alpha\prodG\beta\prodG\gamma$
    means $\alpha\prodG(\beta\prodG\gamma)$.
\end{convention}

The product $\prodG$ is the projection of $\alpha\beta$ onto $\cW_{\leq n}$. 
Furthermore, for $\alpha,\beta,\gamma\in\cW$, 
\begin{align}
    \label{eq:313}
    \alpha\prodG\gamma\prodG \beta&=\alpha\prodG (\gamma\beta),\\
    \label{eq:314}
    \alpha\beta&=\alpha\prodG\beta+\alpha\prodB\beta\\
    \label{eq:315}
    \text{and }
    \alpha\prodB\beta&=\sum_{i+j=n+1}^{2n}\alpha^{(i)}\beta^{(j)}\text{ when }\alpha,\beta\in\cW_{\leq n}.
\end{align}

\begin{notation}[Exponential and logarithm]
    We write $\gamma^{\prodG k}$ for $\gamma\in\cW$. 
    The exponential and the logarithm related to $\prodG$ are defined by 
    \begin{align*}
	\expG(\alpha)&:=\sum_{k\geq 0}\frac{1}{k!}\alpha^{\prodG k}\\
	\text{and }
	\logG(1+\alpha)&:=\sum_{k\geq 0}(-1)^{k+1}\alpha^{\prodG k},
    \end{align*}
    for $\alpha\in\cW$ with $\alpha^{(0)}=0$.
\end{notation}
\begin{hypothesis}
    \label{hyp:101}
    There exists a $\RR$-submodule $\cL$ of $\cW$
    which is stable under the projection $\alpha\mapsto \alpha^{(i)}$, for $i=0,\dotsc,n$. 
    We set $\cL_{\leq n}:=\cL\cap\cW_{\leq n}$. 
\end{hypothesis}

\subsection{Newtonian map}

Our first definition concerns parametric families of functions. 

\begin{definition}[Newtonian map]
    Assume Hypotheses~\ref{hyp:100} and \ref{hyp:101}.
    A \emph{Newtonian map} is a linear map $F$ from $\cW_{\leq (n+1)}$ to $\cC^0(\uV,\uV)$ such that 
    \begin{itemize}[noitemsep,topsep=-\parskip,partopsep=0pt]
	\item $F[1]=\ci$. 
	\item $F[\alpha]$ is $\cC^1$ for any $\alpha\in\cW_{\leq n}$.
	\item $F[\alpha]$ is Lipschitz continuous for any $\alpha\in\cL_{\leq n}$.
	\item for any $(\alpha,\beta)\in\cL_{\leq n}\times\cWln$, 
	    \begin{equation*}
		\label{eq:ext:2} 
		F[\alpha\beta]:=\dD F[\beta]\cdot F[\alpha]. 
	    \end{equation*}
    \end{itemize}
\end{definition}
\begin{remark} When we combine \eqref{eq:ext:2} with \eqref{eq:314} and \eqref{eq:315}, we obtain
    for $(\alpha,\beta)\in\cL_{\leq n}\times\cWln$, 
    \begin{equation*}
	F[\alpha\beta]=F[\alpha\prodG\beta]+F[\alpha\prodB\beta]        
	=F[\alpha\prodG\beta]+R
    \end{equation*}
    where $\alpha\prodG\beta\in\cW_{\leq n}$ and 
    \begin{equation*}
	R:=\sum_{i+j=n+1}^{2n}\dD F[\beta^{(j)}]\cdot F[\alpha^{(i)}]
    \end{equation*}
    since $\alpha^{(i)}\in\cL_{\leq n}$ by Hypothesis~\ref{hyp:101}. 
    Thus, $F[\alpha\prodG\beta]\in\cC^1$ and $R\in\cC^0$. Thus justifies 
    to use both $\prodG$ and $\prodB$ to keep everything well defined.
\end{remark}

Let us give a few examples. Elementary differentials, initially related to Runge-Kutta numerical schemes \cite{hairer_10a}, 
provides us with another natural family of Newtonian maps which we develop later in Section~\ref{sec:BRP}. 

\begin{example}
    \label{ex:algebra}
    Let $\cF$ be the vector space of functions of class $\cC^\infty(\uV,\uV)$ which we equip with 
    the product $f\ast g:=(a\mapsto \dD g(a)\cdot f(a))$. Thus, $(\cF,+,\ast)$ becomes a non-associative algebra
    with a right unit $\ci:\uV\to\uV$, the identify map on $\uV$.
    Any  algebrea homomorphism $F$ from $(\cW,+,\cdot)$ to $(\cF,+,\ast)$ is a Newtonian map.
\end{example}

Instances of Example~\ref{ex:algebra} are given by analytic functions
and by linear operators.

\begin{example}
    Let us consider as $\cW$ the algebra of $\RR$-valued sequences $\alpha:=\Set{\alpha_i}_{i\in\NN}$
    with the product
    \begin{equation*}
	\alpha\beta:=\Set*{\sum_{i+j=k} j\beta_{j+1}\alpha_i}_{k\in\NN}.
    \end{equation*}
    We set $\cL:=\cW_{>0}$.  For $\uV=\RR$, the function $F[\alpha](a)=\sum_{i\in\NN} \alpha_i a^i$
    is then a Newtonian map, as 
    $\dD F[\beta](a)=\sum_{i\in\NN}i\beta_{i+1}a^i$ and $\dD F[\beta](a)\cdot F[\alpha](a)=F[\alpha\beta](a)$.
\end{example}
\begin{example}
    \label{ex:product}
    We consider $\cW$ is a Banach algebra and we let $\uV:=\cW$.
    For $\alpha\in\cW$, we define $F[\alpha](a):=a \alpha $ for $a\in\uV$.
    We note that $\dD F[\alpha](a)\cdot h= h\alpha $ for any $h\in\uV$. Thus,
    \begin{equation*}
	F[\beta]\ast F[\alpha](a):=\dD F[\beta](a)\cdot F[\alpha](a)=a \alpha\beta=F[\alpha\beta](a).
    \end{equation*}
    Then $F$ is a Newtonian map with $\cL=\cW$.
\end{example}
\begin{remark}
    In general, the unit element $1$ of $\cW$ cannot belong to $\cL$, 
    as $F[1\cdot \beta](a)=\dD F[\beta](a)\cdot a$ and this should be equal to $F[\beta]$
    while $F[1]=\ci$. This is however possible in Example~\ref{ex:product}.
\end{remark}
\begin{example}[Derivatives of Newtonian maps]
    Let $F$ be a Newtonian map. Then $G[\alpha]$ defined by 
    \begin{equation*}
	G[\alpha](a,b)=\begin{bmatrix}
	    F[\alpha](a)\\
	    \dD F[\alpha](a)\cdot b 
	\end{bmatrix}
    \end{equation*}
    is a Newtonian map whenever the regularity is suitable. 
    This statement implies that one may construct 
    flows and their derivatives at the same time.
\end{example}


\subsection{Newtonian operators}

\begin{definition}[Differential operator]
    Fix $m\geq 0$.
    A \emph{$m$-order differential operator} $H$ is a linear operator of type 
    \begin{equation*}
	Hg(a)=\sum_{k=0}^m \dD^k g(a)\cdot H_k(a)\text{ for any }g\in\cC^m(\uV,\uV),
    \end{equation*}
    where $H_k$ is a continuous function with values in the symmetrization of $\uV^{\otimes k}$, 
    and $H_m$ is non-vanishing.  We use the convention that $\dD^0 g=g$.
\end{definition}
\begin{definition}[Suitable function]
    Given a differential operator~$H$, we say that~$g$ is a \emph{suitable function} 
    when the regularity of~$g$ is greater or equal than the order of~$H$ so that~$Hg$ is 
    a continuous function.
\end{definition}
\begin{definition}[Newtonian operator]
    Assume Hypotheses~\ref{hyp:100} and \ref{hyp:101}.
    A \emph{Newtonian operator} is a linear map $F^\dag$ from $\cW_{\leq (n+1)}$ to 
    the class of differential operator such that  
    \begin{itemize}[noitemsep,topsep=-\parskip,partopsep=0pt]
	\item $F^\dag[1]=1$, where $1$ is the neutral operator $1g=g$ for any $g\in\cC^0$. 
	\item $F^\dag[\beta]$ has coefficients of class $\cC^1$ for any $\beta\in\cW_{\leq n}$.
	\item $F^\dag[\alpha]g=\dD g\cdot F[\alpha]$ for Lipschitz continuous function $F[\alpha]$ for any $\alpha\in\cL_{\leq n}$.
	\item for any $(\alpha,\beta)\in\cL_{\leq n}\times\cWln$, 
	    \begin{equation*}
		\label{eq:ext:2op} 
		F^\dag[\alpha\beta]g:=\dD F^\dag[\beta]g\cdot F[\alpha]=F^\dag[\alpha]F^\dag[\beta]g,
	    \end{equation*}
	    for any suitable function $g:\uV\to\uV$.
    \end{itemize}
\end{definition}

When $F^\dag[\alpha]$ and $F^\dag[\beta]$ are two first order differential operators,
their product is not a first order differential operator in general. 
The proof of the next lemma is classical \cite[Section III.5.2, p.~85]{hairer_10a}
(it relies on the fact that $\dD^2 g\cdot a\otimes b
=\dD^2 g\cdot b\otimes a$ for any $a,b\in\uV$ and any function $g:\uV\to\uV$ of class $\cC^2$). 

\begin{lemma}
    Set $\lieG{\alpha,\beta}:=\alpha\prodG\beta-\beta\prodG\alpha$. 
    When $\alpha,\beta\in\cL_{\leq n}$, then for any suitable function $g$, 
    \begin{equation*}
	F^\dag[\lieG{\alpha,\beta}]g=\dD g \cdot (F^\dag[\alpha]F[\beta]-F^\dag[\beta]F[\alpha]) 
    \end{equation*}
    so that $F^\dag[\lieG{\alpha,\beta}]$ is a first order differential operator without
    zero order term.
\end{lemma}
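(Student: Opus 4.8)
The plan is to exploit the two equivalent forms of the defining relation for a Newtonian operator, namely $F^\dag[\alpha\beta]g = \dD F^\dag[\beta]g\cdot F[\alpha] = F^\dag[\alpha]F^\dag[\beta]g$, applied separately to the products $\alpha\prodG\beta$ and $\beta\prodG\alpha$. First I would write $F^\dag[\lieG{\alpha,\beta}]g = F^\dag[\alpha\prodG\beta]g - F^\dag[\beta\prodG\alpha]g$ by linearity of $F^\dag$. Since $\alpha,\beta\in\cL_{\leq n}\subset\cWln$, both $\alpha\prodG\beta$ and $\beta\prodG\alpha$ lie in $\cW_{\leq n}$ and the defining relation applies (with the first argument in $\cL_{\leq n}$ and the second in $\cWln$). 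The second-order form gives $F^\dag[\alpha\prodG\beta]g = F^\dag[\alpha]F^\dag[\beta]g$ and $F^\dag[\beta\prodG\alpha]g = F^\dag[\beta]F^\dag[\alpha]g$, so the difference is the commutator $(F^\dag[\alpha]F^\dag[\beta] - F^\dag[\beta]F^\dag[\alpha])g$.

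Next I would expand each composition. Since $\alpha,\beta\in\cL_{\leq n}$, both $F^\dag[\alpha]$ and $F^\dag[\beta]$ are first-order operators of the form $F^\dag[\alpha]h = \dD h\cdot F[\alpha]$ (and likewise for $\beta$), with $F[\alpha],F[\beta]$ Lipschitz; moreover by the coefficient-regularity assumption on $\cW_{\leq n}$ these coefficients are $\cC^1$, so the compositions are well defined on suitable $g$ (of class $\cC^2$). Applying the product rule, $F^\dag[\alpha]F^\dag[\beta]g(a) = \dD\bigl(\dD g\cdot F[\beta]\bigr)(a)\cdot F[\alpha](a) = \dD^2 g(a)\cdot\bigl(F[\beta](a)\otimes F[\alpha](a)\bigr) + \dD g(a)\cdot\bigl(\dD F[\beta](a)\cdot F[\alpha](a)\bigr)$, and symmetrically with $\alpha,\beta$ swapped. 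Subtracting, the crucial point is that the two second-order terms $\dD^2 g(a)\cdot(F[\beta](a)\otimes F[\alpha](a))$ and $\dD^2 g(a)\cdot(F[\alpha](a)\otimes F[\beta](a))$ cancel, because $\dD^2 g$ is symmetric (Schwarz's theorem), exactly the fact cited in the paragraph preceding the lemma. What survives is $\dD g(a)\cdot\bigl(\dD F[\beta](a)\cdot F[\alpha](a) - \dD F[\alpha](a)\cdot F[\beta](a)\bigr)$, which in the notation $F^\dag[\cdot]F[\cdot]$ of the statement reads $\dD g\cdot(F^\dag[\alpha]F[\beta] - F^\dag[\beta]F[\alpha])$. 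This gives the displayed identity, and since the right-hand side is manifestly of the form $\dD g\cdot(\text{vector field})$ with Lipschitz coefficient, $F^\dag[\lieG{\alpha,\beta}]$ is a first-order differential operator with vanishing zero-order term.

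The only genuine subtlety — and the step I would be most careful about — is the regularity bookkeeping: one must check that a ``suitable function'' $g$ for $F^\dag[\lieG{\alpha,\beta}]$ is indeed differentiable enough for both intermediate compositions $F^\dag[\alpha]F^\dag[\beta]g$ and $F^\dag[\beta]F^\dag[\alpha]g$ to make sense, i.e.\ that $g\in\cC^2$ suffices, and that the $\cC^1$ regularity of the coefficients $F[\alpha],F[\beta]$ on $\cW_{\leq n}$ guarantees $\dD F[\beta]\cdot F[\alpha]$ is continuous. Both follow directly from the hypotheses in the definition of a Newtonian operator together with Hypothesis~\ref{hyp:101} (which ensures the components stay in $\cL$, as in the earlier remark). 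No extra input is needed; the algebraic cancellation is immediate once the derivatives are expanded.
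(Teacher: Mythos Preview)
Your proposal is correct and follows exactly the approach the paper indicates: the paper does not give a proof but simply calls it classical and points to the key fact that $\dD^2 g(a)\cdot u\otimes v=\dD^2 g(a)\cdot v\otimes u$, which is precisely the cancellation you carry out after expanding the two compositions via the Leibniz rule. The only cosmetic point is that the defining relation of a Newtonian operator is stated for the full product $F^\dag[\alpha\beta]$ rather than for $F^\dag[\alpha\prodG\beta]$, but this does not affect your argument and the paper itself does not dwell on the distinction here.
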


The proof of the next three lemmas are immediate.

\begin{definition}[Related Newtonian operators and map]
    \label{def:related}
    A Newtonian map $F$ and a Newtonian operator $F^\dag$ are related when
    $F^\dag$ is the Lie derivative with respect to~$F$, \textit{i.e.},
    \begin{equation}
	\label{eq:316}
	F^\dag[\alpha]g(a)=\dD g(a) \cdot F[\alpha](a)\text{ for any }\alpha\in\cL_{\leq n}, 
	\ a\in\uV \text{ and }g\in\cC^1(\uV,\uV).
    \end{equation}
\end{definition}
\begin{lemma}
    \label{lem:equiv}
    Assume that the Newtonian map $F$ and the Newtonian operator $F^\dag$ are related. 
    Then $g(y_t[\alpha])=Y_t[\alpha]g$, where $y[\alpha]$ solves~\eqref{eq:ode:1}
    and $Y[\alpha]$ solves~\eqref{eq:ode:4}.
\end{lemma}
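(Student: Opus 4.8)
The plan is to show that the composition operator along the flow of \eqref{eq:ode:1} solves the linear operator equation \eqref{eq:ode:4}, and then to conclude by uniqueness. Fix $\alpha\in\cL_{\leq n}$ and $a\in\uV$. Since $F[\alpha]$ is Lipschitz, \eqref{eq:ode:1} admits a unique flow $t\mapsto y_t[\alpha](a)$, and I introduce, for $g\in\cC^1(\uV,\uV)$, the operator $Z_t g(a):=g(y_t[\alpha](a))$ — this is precisely the $Y_t[\alpha]$ of \eqref{eq:ode:9}, but I treat it here as a candidate and aim to identify it with the solution $Y[\alpha]$ of \eqref{eq:ode:4}.

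The core of the argument is a single application of the Newton formula. By \eqref{eq:ode:2}, for every $g\in\cC^1(\uV,\uV)$, $Z_t g(a)=g(a)+\int_0^t \dD g(y_s[\alpha](a))\cdot F[\alpha](y_s[\alpha](a))\vd s$. Because $F$ and $F^\dag$ are related, \eqref{eq:316} gives $\dD g(b)\cdot F[\alpha](b)=F^\dag[\alpha]g(b)$ for every $b\in\uV$; evaluating at $b=y_s[\alpha](a)$ and noting that $F^\dag[\alpha]g\in\cC^0(\uV,\uV)$ (its coefficient $F[\alpha]$ is continuous and $g\in\cC^1$), the integrand equals $(F^\dag[\alpha]g)(y_s[\alpha](a))=Z_s(F^\dag[\alpha]g)(a)$. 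Hence $Z_t g=g+\int_0^t Z_s(F^\dag[\alpha]g)\vd s$ for all such $g$, that is $Z_t=1+\int_0^t Z_s F^\dag[\alpha]\vd s$, which is exactly \eqref{eq:ode:4}. So $Z$ solves \eqref{eq:ode:4}.

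It remains to see that \eqref{eq:ode:4} pins its solution down, so that $Y_t[\alpha]=Z_t$ and therefore $Y_t[\alpha]g=g(y_t[\alpha])$. The cleanest route is to track the Picard iterates $Y^{(0)}=1$, $Y^{(k+1)}_t g=g+\int_0^t Y^{(k)}_s(F^\dag[\alpha]g)\vd s$: reading the computation above in reverse, these are the composition operators attached to the Picard iterates of \eqref{eq:ode:1}, which converge locally uniformly to $y[\alpha]$ since $F[\alpha]$ is Lipschitz, so any solution of \eqref{eq:ode:4} must equal $Z$. The one delicate point — and the main obstacle — is the functional-analytic bookkeeping: \eqref{eq:ode:4} has to be read as an identity of operators on the suitable class $\cC^1(\uV,\uV)$, with $F^\dag[\alpha]$ lowering regularity by one and $Y_s[\alpha]$ acting by composition on the resulting $\cC^0$ function, so that every term of the iteration and of the integral is well defined; once this is set in place, the lemma reduces to the Newton formula together with the relatedness identity \eqref{eq:316}.
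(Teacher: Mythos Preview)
Your core argument is correct and is exactly what the paper has in mind: the Newton formula \eqref{eq:ode:2} together with the relatedness identity \eqref{eq:316} immediately gives that $Z_t g := g(y_t[\alpha])$ satisfies \eqref{eq:ode:4}. The paper declares the proof immediate and gives no further details.

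The uniqueness detour you append is unnecessary and, as written, does not work. In the paper, $Y_t[\alpha]$ is \emph{defined} by \eqref{eq:ode:9} as the composition operator $g\mapsto g(y_t[\alpha])$; equation \eqref{eq:ode:4} is a property it enjoys, not an abstract equation that characterizes it. So once you have shown that $Z$ solves \eqref{eq:ode:4}, you are done: $Z=Y[\alpha]$ by definition, and there is nothing to pin down. Moreover, your Picard argument is incorrect: the operator iterates $Y^{(k+1)}_t g = g + \int_0^t Y^{(k)}_s(F^\dag[\alpha]g)\vd s$ are \emph{not} the composition operators along the ODE Picard iterates $y^{(k)}$. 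Indeed $g(y^{(k+1)}_t(a)) = g\bigl(a+\int_0^t F[\alpha](y^{(k)}_s(a))\vd s\bigr)$, whereas the operator iterate evaluates to $g(a)+\int_0^t (F^\dag[\alpha]g)(y^{(k)}_s(a))\vd s$; these coincide only for affine $g$, and agree in general only in the limit---precisely via the Newton formula applied to the true flow, which is the statement you are trying to prove.
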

\begin{lemma}
    \label{lem:OtoM}
    Let $F^\dag$ be a Newtonian operator. Then $F[\alpha]:=F^\dag[\alpha]\ci$ is a Newtonian map
    and \eqref{eq:316} is true.
\end{lemma}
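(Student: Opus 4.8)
The plan is to check, one property at a time, that $F[\alpha]:=F^\dag[\alpha]\ci$ meets the four requirements in the definition of a Newtonian map, using throughout that the identity map $\ci$ is of class $\cC^\infty(\uV,\uV)$—hence a suitable function for every differential operator—together with the elementary facts $\dD\ci(a)=\id_\uV$ for all $a$ and $\dD^k\ci=0$ for $k\geq 2$.

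First I would dispatch linearity and the codomain: $F^\dag$ is linear into the space of differential operators and $H\mapsto H\ci$ is linear (it is evaluation of a linear operator at a fixed function), so $\alpha\mapsto F^\dag[\alpha]\ci$ is linear on $\cW_{\leq(n+1)}$; and since a differential operator sends $\cC^\infty$ functions to continuous functions, $F[\alpha]\in\cC^0(\uV,\uV)$. The normalisation is immediate, $F[1]=F^\dag[1]\ci=1\,\ci=\ci$.

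Next I would read off the regularity and the identity \eqref{eq:316}. Writing a differential operator in the form $Hg(a)=\sum_{k=0}^m\dD^k g(a)\cdot H_k(a)$ and evaluating at $g=\ci$ annihilates every term with $k\geq 2$, leaving $F^\dag[\alpha]\ci(a)=H_0(a)\,a+H_1(a)$. For $\alpha\in\cW_{\leq n}$ the coefficients $H_0,H_1$ are $\cC^1$ by hypothesis and $a\mapsto a$ is smooth, whence $F[\alpha]\in\cC^1$. For $\alpha\in\cL_{\leq n}$, the defining relation of a Newtonian operator says that $F^\dag[\alpha]$ has no zero-order term and first-order coefficient a Lipschitz function which the definition already names $F[\alpha]$; since then $H_0=0$ and $\dD\ci(a)=\id_\uV$, we get $F^\dag[\alpha]\ci(a)=\dD\ci(a)\cdot F[\alpha](a)=F[\alpha](a)$, so the two uses of $F[\alpha]$ agree and $F[\alpha]$ is Lipschitz. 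Running the same computation with a general $g\in\cC^1(\uV,\uV)$ in place of $\ci$ is precisely \eqref{eq:316}.

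It remains to verify the product rule. For $(\alpha,\beta)\in\cL_{\leq n}\times\cW_{\leq n}$ I would apply the multiplicativity axiom of a Newtonian operator to the suitable function $g=\ci$, obtaining $F^\dag[\alpha\beta]\ci=\dD\bigl(F^\dag[\beta]\ci\bigr)\cdot F[\alpha]$; since $F^\dag[\beta]\ci=F[\beta]$ by construction, this reads $F[\alpha\beta]=\dD F[\beta]\cdot F[\alpha]$, as wanted. I expect no genuine obstacle here—the argument is an unwinding of the definitions—and the only point deserving a moment's care is to ensure every operator and every derivative is applied within its domain, namely that $\ci$ is suitable for $F^\dag[\beta]$ and that $F^\dag[\beta]\ci=F[\beta]$ is $\cC^1$ so that $\dD(F^\dag[\beta]\ci)$ is legitimate; both follow from $\ci\in\cC^\infty(\uV,\uV)$ and the $\cC^1$ regularity of the coefficients of $F^\dag[\beta]$ for $\beta\in\cW_{\leq n}$.
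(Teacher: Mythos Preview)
Your proposal is correct and is precisely the definition-unwinding the paper has in mind; indeed the paper does not spell out a proof at all, declaring the result ``immediate'' along with the two neighbouring lemmas. Your careful check of each item in the definition, including the observation that evaluating at $g=\ci$ kills all terms of order $\geq 2$ in the differential operator, is exactly the intended argument.
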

\begin{lemma}
    \label{lem:MtoO}
    Let $F[\alpha]$ be defined as a $\cC^n$ function which is Lipschitz on $\uV$
    for any $\alpha$ in a set $\cL$. 
    Let $\cW$ be the algebra freely generated by the elements of $\cL$.
    We assume that $\cW$ is a graded algebra (the grading is not necessarily 
    the one that arises from the number of terms in the monomials). 
    We define $F^\dag$ by $F^\dag[\alpha]g(a):=\dD g(a)\cdot F[\alpha]$ for $\alpha\in\cL$
    and recursively $F^\dag[\alpha\gamma]g(a):=F^\dag[\alpha]F^\dag[\gamma]g(a)$
    for any $\gamma=\alpha_1\dotsb\alpha_m$ with~$\gamma\in\cW_{\leq n}$. 
    Then~$F^\dag$ defines a Newtonian operator.
\end{lemma}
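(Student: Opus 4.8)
The plan is to verify the four defining properties of a Newtonian operator directly for the map $F^\dag$ produced by the recursion; the only genuine work is to check that the grading of $\cW$ controls both the order of each operator $F^\dag[\gamma]$ and the regularity of its coefficients. I first record the bookkeeping. Since $\cW$ is freely generated by $\cL$ and graded with $\cW_0\simeq\RR$ (Hypothesis~\ref{hyp:100}), every letter has degree at least $1$, so a monomial $\alpha_1\dotsb\alpha_m$ lies in $\cW_{\geq m}$; consequently every monomial occurring in some $\beta\in\cW_{\leq k}$ has length at most $k$, and any monomial $\alpha\alpha_1\dotsb\alpha_m\in\cW_{\leq n+1}$ has its tail $\alpha_1\dotsb\alpha_m$ in $\cW_{\leq n}$. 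As the monomials form a basis of $\cW$, it suffices to define and analyse $F^\dag$ on monomials of $\cW_{\leq n+1}$ and then extend $\RR$-linearly, and the recursion $F^\dag[\alpha\gamma]g=F^\dag[\alpha]\bigl(F^\dag[\gamma]g\bigr)$ is well posed because a nonempty word has a unique leftmost letter. (When $F$ is only given on a bare set $\cL$ of letters, one first extends it linearly to the span of these letters, which then plays the role of $\cL$ in the definition of a Newtonian operator.)

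The core is an induction on the length $m$ of a monomial $\gamma=\alpha_1\dotsb\alpha_m\in\cW_{\leq n+1}$, so $1\leq m\leq n+1$: I claim that $F^\dag[\gamma]g=\sum_{k=0}^{m}\dD^{k}g\cdot H^{\gamma}_{k}$ with $H^{\gamma}_{0}=0$, with $H^{\gamma}_{m}$ the symmetrisation of $F[\alpha_1]\otimes\dotsb\otimes F[\alpha_m]$, and with $H^{\gamma}_{k}$ of class $\cC^{\,n-(m-k)}$ for $1\leq k\leq m$ (reading $\cC^{\leq 0}$ as $\cC^{0}$). The case $m=1$ is the definition $F^\dag[\alpha]g=\dD g\cdot F[\alpha]$ together with $F[\alpha]\in\cC^{n}$. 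For the step, write $\gamma=\alpha\gamma'$ with $\gamma'\in\cW_{\leq n}$ of length $m-1$, put $h:=F^\dag[\gamma']g$, and expand $F^\dag[\gamma]g=\dD h\cdot F[\alpha]$ by the Leibniz rule: the coefficient of $\dD^{k}g$ receives the term $H^{\gamma'}_{k-1}\otimes F[\alpha]$ (symmetrised) from differentiating $\dD^{k-1}g$ and the term $\dD H^{\gamma'}_{k}\cdot F[\alpha]$ from differentiating the coefficient $H^{\gamma'}_{k}$; by the inductive hypothesis both carry regularity exactly $n-(m-k)$, the zeroth-order coefficient vanishes because $H^{\gamma'}_{0}=0$, and the top one is the symmetrisation of $F[\alpha]\otimes H^{\gamma'}_{m-1}$, as claimed.

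Granting this, the four properties follow. The empty word gives $F^\dag[1]g=g$, i.e.\ $F^\dag[1]=1$. For $\beta\in\cW_{\leq n}$ each monomial has length $m\leq n$, so every nonzero coefficient of $F^\dag[\beta]$ has regularity $n-(m-k)\geq n-(m-1)\geq 1$, whence $\cC^{1}$ coefficients; for $\beta\in\cW_{\leq n+1}$ the same count gives at least continuous coefficients, so $F^\dag$ indeed takes values in the class of (finite-order) differential operators. The third property is the definition on letters, extended linearly, with $F[\alpha]$ Lipschitz for $\alpha\in\cL_{\leq n}$ by hypothesis. For the fourth property take $g$ suitable for $F^\dag[\alpha\beta]$, hence $g\in\cC^{\,m'+1}$ where $m'\leq n$ is the largest monomial length in $\beta$; then the claim gives $F^\dag[\beta]g\in\cC^{1}$, so $\dD\bigl(F^\dag[\beta]g\bigr)$ is meaningful, and for a letter $\alpha$ the recursion literally reads $F^\dag[\alpha\beta]g=F^\dag[\alpha]\bigl(F^\dag[\beta]g\bigr)=\dD\bigl(F^\dag[\beta]g\bigr)\cdot F[\alpha]$; linearity in $\alpha$ and in $\beta$ then extends the identity to all $(\alpha,\beta)\in\cL_{\leq n}\times\cW_{\leq n}$.

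The one delicate point is the regularity accounting inside the induction: one must use the grading — not the word length — to bound $m$ by $n$ (respectively $n+1$), and then verify that the two Leibniz contributions to each coefficient carry the same regularity $n-(m-k)$, so that the $\cC^{1}$ demand in the second and fourth properties is exactly met. Everything else is a direct unwinding of the definitions, which is why the lemma can be stated as immediate.
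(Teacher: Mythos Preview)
Your proof is correct and is precisely the direct verification the paper has in mind when it declares the lemma immediate; the paper gives no argument beyond that remark, and your induction on word length together with the Leibniz-rule bookkeeping on the coefficients $H^\gamma_k$ is the natural way to make the regularity check explicit.
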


\subsection{The Taylor formula with remainder}

The now state the Taylor formula.

\begin{lemma}[Taylor formula with remainder, functional form]
    Let $F$ be a Newtonian map.
    For any $\alpha\in\cL_{\leq n}$ and $\beta\in\cW_{\leq n}$,
    \begin{equation}
	\label{eq:T1}
	F[\beta](y_t[\alpha](a))=F[\expG(t\alpha)\prodG \beta](a)+R_t[\alpha,\beta](a)
	\text{ for }t\geq 0, 
    \end{equation}
    where $y[\alpha]$ solves \eqref{eq:ode:1} and 
    \begin{equation*}
	R_t[\alpha,\beta](a)=\int_0^t F\Bra*{\alpha\prodB \Paren*{\expG((t-s)\alpha)\prodG \beta}}(y_s[\alpha](a))\vd s
	\text{ for }t\geq 0. 
    \end{equation*}
\end{lemma}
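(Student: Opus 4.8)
The plan is to slide between the two sides of~\eqref{eq:T1} along a single parameter and to recognise the remainder as the integral of the resulting derivative. Fix $t\geq 0$, $a\in\uV$, $\alpha\in\cL_{\leq n}$ and $\beta\in\cW_{\leq n}$, and for $s\in[0,t]$ set
\[
    \mu_s:=\expG((t-s)\alpha)\prodG\beta\in\cW_{\leq n}\qquad\text{and}\qquad u(s):=F[\mu_s](y_s[\alpha](a)).
\]
Since $\expG(0)=1$ and $1\prodG\beta=\beta$ (because $1\in\cW_0$ and $\beta\in\cW_{\leq n}$), one has $u(t)=F[\beta](y_t[\alpha](a))$, while $u(0)=F[\expG(t\alpha)\prodG\beta](a)$. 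Hence it suffices to show that $u$ is $\cC^1$ on $[0,t]$ with
\[
    u'(s)=F\bigl[\alpha\prodB\mu_s\bigr](y_s[\alpha](a)),
\]
and then to integrate over $[0,t]$, which yields exactly~\eqref{eq:T1} with the announced $R_t[\alpha,\beta]$.

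Differentiability is easy. For $\expG(t\alpha)$ to be defined, $\alpha$ must have vanishing degree-$0$ part, so $\alpha^{\prodG k}\in\bigoplus_{m\geq k}\cW_m$ is killed by the truncation for $k>n$; thus $\expG((t-s)\alpha)=\sum_{k=0}^{n}\tfrac{(t-s)^k}{k!}\alpha^{\prodG k}$ is a polynomial in $s$ with values in the finite sum $\cW_{\leq n}$, and so is $s\mapsto\mu_s$. Because $F$ is linear and $F[\gamma]$ is $\cC^1$ for every $\gamma\in\cW_{\leq n}$, the map $(s,b)\mapsto F[\mu_s](b)$ is jointly $\cC^1$; and $s\mapsto y_s[\alpha](a)$ is $\cC^1$ with $\tfrac{\dd}{\dd s}y_s[\alpha](a)=F[\alpha](y_s[\alpha](a))$ by~\eqref{eq:ode:1}, since $F[\alpha]$ is Lipschitz. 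The chain and product rules then give
\[
    u'(s)=F\Bigl[\tfrac{\dd}{\dd s}\mu_s\Bigr](y_s[\alpha](a))+\dD F[\mu_s](y_s[\alpha](a))\cdot F[\alpha](y_s[\alpha](a)).
\]

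Two short computations complete the argument. First, differentiating the (finite) exponential series termwise and using $\alpha^{\prodG(k+1)}=\alpha\prodG\alpha^{\prodG k}$ together with the bilinearity of $\prodG$ gives $\tfrac{\dd}{\dd s}\expG((t-s)\alpha)=-\alpha\prodG\expG((t-s)\alpha)$ (the would-be top term carries degree $\geq n+1$ and vanishes); multiplying on the right by $\beta$ and reorganising the nested truncations by means of~\eqref{eq:313}, one obtains $\tfrac{\dd}{\dd s}\mu_s=-\alpha\prodG\mu_s$, hence $F[\tfrac{\dd}{\dd s}\mu_s]=-F[\alpha\prodG\mu_s]$. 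Second, the Newtonian relation applied to the pair $(\alpha,\mu_s)\in\cL_{\leq n}\times\cW_{\leq n}$ reads $\dD F[\mu_s]\cdot F[\alpha]=F[\alpha\mu_s]$, and $F[\alpha\mu_s]=F[\alpha\prodG\mu_s]+F[\alpha\prodB\mu_s]$ by~\eqref{eq:314}. Adding the two pieces of $u'(s)$, the $F[\alpha\prodG\mu_s]$ terms cancel and $u'(s)=F[\alpha\prodB\mu_s](y_s[\alpha](a))$, which is continuous in $s$: by the remark following the definition of a Newtonian map, $F[\alpha\prodB\mu_s]$ is a finite sum of the $\cC^0$ functions $\dD F[\mu_s^{(j)}]\cdot F[\alpha^{(i)}]$ with $\alpha^{(i)}\in\cL_{\leq n}$ (Hypothesis~\ref{hyp:101}), so the integrand is well defined. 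Integrating over $[0,t]$ and recalling $\mu_s=\expG((t-s)\alpha)\prodG\beta$ gives the claim.

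The only genuinely delicate point is the algebraic identity $\tfrac{\dd}{\dd s}\mu_s=-\alpha\prodG\mu_s$: one has to keep Convention~\ref{conv:1} (the right-to-left reading of iterated $\prodG$) straight, use~\eqref{eq:313} and the grading of $\cW$ to push the truncation past $\beta$, and observe by a degree count (as in~\eqref{eq:315}) that exactly the overflow captured by $\prodB$ survives the cancellation. Once this identity and the Newtonian relation are in hand, everything else---the interpolation, the identification of the endpoints, and the cancellation---is routine bookkeeping.
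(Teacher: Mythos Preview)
Your proof is correct and takes a genuinely different route from the paper. The paper argues by \emph{iteration}: it applies the Newton formula once to $F[\beta](y_t[\alpha])$, isolates the term $F[\alpha\prodG\beta]$ plus a $\prodB$-remainder, and then repeats on $F[\alpha\prodG\beta]$, $F[\alpha\prodG(\alpha\prodG\beta)]$, etc. This produces a finite Taylor sum together with a nested iterated integral of the form $\int_0^t\!\int_0^{t_1}\!\cdots\!\int_0^{t_{k-1}} F[\alpha\prodB\alpha^{\prodG(k-1)}\prodG\beta](y_{t_k})\,\dd t_k\cdots\dd t_1$, which is then collapsed to a single integral via the elementary identity $\int_0^t\!\cdots\!\int_0^{t_{k-1}} h(t_k)\,\dd t_k\cdots\dd t_1=\int_0^t \tfrac{(t-s)^{k-1}}{(k-1)!}h(s)\,\dd s$. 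Summing the resulting monomials in $(t-s)$ rebuilds $\expG((t-s)\alpha)$ inside $F[\alpha\prodB(\cdot)]$. Your argument, by contrast, is a one-shot Duhamel/interpolation: you differentiate $s\mapsto F[\expG((t-s)\alpha)\prodG\beta](y_s[\alpha])$ and observe that the Newtonian relation makes the $\prodG$-part cancel exactly the derivative of the exponential, leaving only the $\prodB$-overflow. This is shorter and avoids both the inductive bookkeeping and the iterated-integral identity; the price is that you must justify the algebraic identity $\tfrac{\dd}{\dd s}\mu_s=-\alpha\prodG\mu_s$ up front (which, under Convention~\ref{conv:1} and \eqref{eq:313}, is exactly the content of your last paragraph). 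In the associative case the two calculations are formally equivalent; in the non-associative case both proofs rely on reading $\expG(t\alpha)\prodG\beta$ via the right-to-left convention, so your treatment is at the same level of rigor as the paper's.
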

\begin{proof}
    Using the Newton formula applied to  \eqref{eq:ode:1}, since $F[\beta]$ is of class $\cC^1$, 
    \begin{multline*}
	F[\beta](y[\alpha]_t(a))=F[\beta](a)
	+\int_0^t \dD F[ \beta] F[\alpha](y_s[\alpha](a))\vd s
	\\
	=
	F[ \beta](a)+tF[\alpha\prodG \beta](a)
	+\int_0^t F[\alpha\prodB \beta](y_s[\alpha](a))\vd s
	\\
	+\int_0^t \Paren*{F[\alpha \prodG \beta](y_s[\alpha](a))-F[\alpha\prodG \beta](a)}\vd s.
    \end{multline*}
    Iterating this procedure, 
    \begin{multline*}
	F[ \beta](y[\alpha]_t(a))
	\\
	=
	\sum_{k=0}^{+\infty} \frac{t^k}{k!}F[{\alpha^{\prodG k}\prodG \beta}](a)
	+\sum_{k=1}^{+\infty}
	\int_0^t \int_0^{t_1}\dotsb \int_0^{t_{k-1}}
	F[\alpha\prodB\alpha^{\prodG (k-1)}\prodG \beta](y_{t_k}[\alpha](a))\vd s.
    \end{multline*}
    Using for example \cite[Exercise~2.81, p.~253]{duistermaat}, 
    \begin{multline*}
	\int_0^t \int_0^{t_1}\dotsb \int_0^{t_{k-1}}
	F[\alpha \prodB\alpha^{\prodG (k-1)} \prodG \beta](y_{t_k}[\alpha](a))\vd t_k\dotsb\vd t_1
	\\
	=
	\int_0^t \frac{(t-s)^{k-1}}{(k-1)!}F[\alpha\prodB\alpha^{\prodG (k-1)} \beta](y[\alpha]_{s}(a))\vd s.
    \end{multline*}
    With the definition of the exponential $\expG$, this gives the desired formula.
\end{proof}

The proof of the operational form is similar so that we skip it.

\begin{lemma}[Taylor formula with remainder, operational form]
    Let $F^\dag$ be a Newtonian operator.
    For any $\alpha\in\cL_{\leq n}$ and $\beta\in\cW_{\leq n}$,
    \begin{equation}
	\label{eq:T2}
	Y_t[\alpha]F^\dag[\beta]=F^\dag[\expG(t\alpha)\prodG \beta]+R_t^\dag[\alpha,\beta],
    \end{equation}
    where $Y[\alpha]$ is defined by \eqref{eq:ode:9} and 
    \begin{equation*}
	R_t^\dag[\alpha,\beta]g(a):=\int_0^t Y_s[\alpha]F^\dag\Bra*{\alpha\prodB \Paren*{\expG((t-s)\alpha)\prodG \beta}}g(a)\vd s,
    \end{equation*}
    for any suitable function $g$. 
\end{lemma}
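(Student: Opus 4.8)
The plan is to mirror the proof of the functional form, replacing the map $F$ with the operator $F^\dag$ and invoking the operator version of the Newton relation. First I would apply the defining ODE \eqref{eq:ode:4} for $Y_t[\alpha]$, tested against $F^\dag[\beta]g$ for a suitable function $g$; since $F^\dag[\beta]$ has $\cC^1$ coefficients when $\beta\in\cW_{\leq n}$, the expression $F^\dag[\alpha]F^\dag[\beta]g$ makes sense as a continuous function, and by the defining property of a Newtonian operator it equals $F^\dag[\alpha\beta]g$. This gives
\begin{equation*}
    Y_t[\alpha]F^\dag[\beta]g(a)=F^\dag[\beta]g(a)+\int_0^t Y_s[\alpha]F^\dag[\alpha\beta]g(a)\vd s.
\end{equation*}

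Next I would split $\alpha\beta=\alpha\prodG\beta+\alpha\prodB\beta$ using \eqref{eq:314}, noting that $\alpha\prodB\beta=\sum_{i+j=n+1}^{2n}\alpha^{(i)}\beta^{(j)}$ by \eqref{eq:315}, so that $F^\dag[\alpha\prodB\beta]$ is well defined as a sum of $F^\dag[\alpha^{(i)}]F^\dag[\beta^{(j)}]$ with each $\alpha^{(i)}\in\cL_{\leq n}$ by Hypothesis~\ref{hyp:101}; this keeps $\alpha\prodG\beta\in\cW_{\leq n}$, so the iteration can continue. One then writes the integrand $Y_s[\alpha]F^\dag[\alpha\prodG\beta]g(a)$ as $F^\dag[\alpha\prodG\beta]g(a)+(Y_s[\alpha]-1)F^\dag[\alpha\prodG\beta]g(a)$ and iterates, each step producing a fresh factor of $\alpha\prodG$ on the ``principal'' term and a remainder term carrying one factor of $\alpha\prodB$. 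After $k$ iterations the principal part is $\sum_{j=0}^{k}\frac{t^j}{j!}F^\dag[\alpha^{\prodG j}\prodG\beta]g(a)$, which converges to $F^\dag[\expG(t\alpha)\prodG\beta]g(a)$ since the grading truncation makes the sum effectively finite in $\cW_{\leq n}$.

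For the remainder, the $k$-th iteration contributes an iterated integral
\begin{equation*}
    \int_0^t\int_0^{t_1}\dotsb\int_0^{t_{k-1}} Y_{t_k}[\alpha]F^\dag[\alpha\prodB(\alpha^{\prodG(k-1)}\prodG\beta)]g(a)\vd t_k\dotsb\vd t_1,
\end{equation*}
and applying the same reduction of iterated integrals over a simplex (as in \cite[Exercise~2.81, p.~253]{duistermaat}) collapses this to $\int_0^t\frac{(t-s)^{k-1}}{(k-1)!}Y_s[\alpha]F^\dag[\alpha\prodB(\alpha^{\prodG(k-1)}\prodG\beta)]g(a)\vd s$. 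Summing over $k\geq 1$ and recognizing $\sum_{k\geq1}\frac{(t-s)^{k-1}}{(k-1)!}\alpha^{\prodG(k-1)}\prodG\beta=\expG((t-s)\alpha)\prodG\beta$ yields exactly $R_t^\dag[\alpha,\beta]g(a)=\int_0^t Y_s[\alpha]F^\dag[\alpha\prodB(\expG((t-s)\alpha)\prodG\beta)]g(a)\vd s$, which is \eqref{eq:T2}.

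The main obstacle is bookkeeping rather than analysis: one must check at each iteration that every operator appearing is applied only to a suitable function, i.e.\ that the regularity of $g$ is high enough for all the compositions $Y_s[\alpha]F^\dag[\cdots]g$ to be continuous, and that the $\prodB$ term genuinely lands in $\cW_{>n}$ so that no term of degree $\leq n$ is dropped when passing to the quotient. Since $F^\dag[\beta]$ has $\cC^1$ coefficients for $\beta\in\cW_{\leq n}$, one differentiation per step is exactly what is available, and the grading guarantees the principal sum terminates; once these consistency points are verified, the argument is formally identical to the functional case, which is why it is legitimate to omit the details.
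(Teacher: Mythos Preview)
Your proposal is correct and follows exactly the approach the paper intends: the paper explicitly states that the operational proof is similar to the functional one and skips it, and your argument is precisely the operator-valued transcription of that proof, replacing the Newton formula \eqref{eq:ode:2} by the linear equation \eqref{eq:ode:4} and the Newtonian-map relation $\dD F[\beta]\cdot F[\alpha]=F[\alpha\beta]$ by the Newtonian-operator relation $F^\dag[\alpha]F^\dag[\beta]g=F^\dag[\alpha\beta]g$. The bookkeeping remarks you make about suitability of $g$ and the termination of the principal sum are the right consistency checks, and nothing further is needed.
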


\begin{remark}
    For $b,b'\in\uV$, we consider $\cL=\Set{\alpha}$ for a symbol $\alpha$
    and $\cW$ the word algebra generated, with $\cW_k$ the span of $\alpha^k$.
    We set $F^\dag[\alpha]g(a):=\dD g(a)\cdot (b'-b)$. 
    We define for $g\in\cC^k$ and $a\in\uV$,
    \begin{equation*}
	F^\dag[\alpha^k]g(a):=F^\dag[\alpha]F^\dag[\alpha^{k-1}]g(a)=\dD^k g(a)\cdot (b'-b)^{\otimes k}
	\text{ for }a\in\uV.
    \end{equation*}
    The Taylor formula \eqref{eq:T2} contains the usual Taylor formula with remainder
    for any~$g\in\cC^{n+1}$:
    \begin{equation*}
	g(b')=\sum_{k=0}^n \frac{1}{k!} \dD^k g(b)\cdot (b'-b)
	+\int_0^1 \frac{(1-s)^{n}}{n!}\dD^{n+1} g(sb+(1-s)b')\cdot (b'-b)^{\otimes k}\vd s
    \end{equation*}
    since  $\alpha\prodB\expG((t-s)\alpha)=(t-s)^n\alpha^{n+1}/n!$.
\end{remark}

\subsection{The composition formula}

The subspace $\cL$ is not necessarily a subalgebra.
We start by defining a binary relation between elements of $\cL$. 

\begin{notation}
    \label{not:301}
    We write 
    \begin{align}
	\label{eq:309}
	\alpha\binBCHDG\beta&:=\logG(\expG(\alpha)\prodG\expG(\beta))\text{ for }\alpha,\beta\in\cL.
    \end{align}
    By the properties of $\prodG$, $\alpha\binBCHDG\beta\in\cL_{\leq n}$.
\end{notation}

Usually, expressions of $\alpha\binBCHDG\beta$ is given by the truncated Baker-Campbell-Hausdorff-Dynkin (BCHD) formula
\cite{bonfiglioli}. BCHD type formula can be extended to  non-associative algebras \cite{mostovoy17a}.

\begin{hypothesis}
    \label{hyp:303}
    The submodule $\cL$ is such that $\alpha\binBCHDG\beta\in\cL_{\leq n}$ for any $\alpha,\beta\in\cL$.
\end{hypothesis}

\begin{example}
    When $\cW$ is a graded Lie algebra and $\cL$ is a Lie algebra generated by $\lie{\alpha,\beta}=\alpha\beta-\beta\alpha$ 
    containing $\cW_1$, then Hypothesis~\ref{hyp:303} holds true.
\end{example}

\begin{lemma}[Composition formula, functional form]
    Let $F$ be a Newtonian map.
    For $\alpha,\beta\in\cL_{\leq n}$, under Hypotheses~\ref{hyp:100}, \ref{hyp:101} and~\ref{hyp:303}, 
    \begin{multline}
	\label{eq:302}
	y_1[\beta]\circ y_1[\alpha](a)
	=y_1[\alpha\binBCHDG \beta](a)
	\\
	+R_1[\alpha,\expG(\beta)](a)
	+R_1[\beta,1](\Phi[\alpha](a))
	-R_1[\alpha\binBCHDG \beta,1](a).
    \end{multline}
\end{lemma}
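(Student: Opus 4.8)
The plan is to write the composition $y_1[\beta]\circ y_1[\alpha]$ as a perturbation of the flow of the single vector field $F[\alpha\binBCHDG\beta]$, using the Taylor formula with remainder (the functional form, equation~\eqref{eq:T1}) three times and exploiting the defining property~\eqref{eq:309} of $\binBCHDG$. First I would apply the Taylor formula with $t=1$ to express $F[\beta'](y_1[\alpha](a))$ for a suitable $\beta'\in\cW_{\leq n}$ as $F[\expG(\alpha)\prodG\beta'](a)+R_1[\alpha,\beta'](a)$. The natural choice is to take $\beta'$ so that $\expG(\alpha)\prodG\beta'=\expG(\alpha\binBCHDG\beta)$; by the very definition~\eqref{eq:309}, $\expG(\alpha)\prodG\expG(\beta)=\expG(\alpha\binBCHDG\beta)$ in $\cW_{\leq n}$, so $\beta'=\expG(\beta)$ does the job. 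However, the left-hand side we want is a composition of the two \emph{flows}, i.e. evaluations at time $1$, not of an $F$ applied at a shifted point, so the key is to recognize $y_1[\beta]\circ y_1[\alpha](a)$ as $F[\ci]$-type data transported along $y[\alpha]$ and then along $y[\beta]$.

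Concretely, the second step is to observe that $y_1[\beta](b)=F[\expG(\beta)](b)+R_1[\beta,1](b)$: this is~\eqref{eq:T1} with $\beta=1$ (the identity $\ci$), since $F[\expG(1\cdot\beta)\prodG 1](b)=F[\expG(\beta)](b)$. Apply this with $b=y_1[\alpha](a)=:\Phi[\alpha](a)$. This gives
\begin{equation*}
    y_1[\beta]\circ y_1[\alpha](a)=F[\expG(\beta)](y_1[\alpha](a))+R_1[\beta,1](\Phi[\alpha](a)).
\end{equation*}
Now apply the Taylor formula~\eqref{eq:T1} once more to the first term, with the roles $\alpha\leftrightarrow\alpha$, $\beta\leftrightarrow\expG(\beta)$:
\begin{equation*}
    F[\expG(\beta)](y_1[\alpha](a))=F[\expG(\alpha)\prodG\expG(\beta)](a)+R_1[\alpha,\expG(\beta)](a)=F[\expG(\alpha\binBCHDG\beta)](a)+R_1[\alpha,\expG(\beta)](a),
\end{equation*}
using Notation~\ref{not:301} and Hypothesis~\ref{hyp:303} to ensure $\alpha\binBCHDG\beta\in\cL_{\leq n}$ so the Taylor formula is legitimately applicable with $\gamma:=\alpha\binBCHDG\beta$ as the driving element. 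Finally, the third invocation of~\eqref{eq:T1}, now with $\gamma$ in the role of $\alpha$ and $1$ in the role of $\beta$, reads $y_1[\gamma](a)=F[\expG(\gamma)](a)+R_1[\gamma,1](a)$, i.e. $F[\expG(\alpha\binBCHDG\beta)](a)=y_1[\alpha\binBCHDG\beta](a)-R_1[\alpha\binBCHDG\beta,1](a)$. Substituting back collects exactly the three remainder terms in~\eqref{eq:302}.

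The main obstacle is bookkeeping rather than a deep difficulty: one must make sure that at each application of~\eqref{eq:T1} the element playing the role of "$\alpha$" actually lies in $\cL_{\leq n}$ (true for $\alpha$, $\beta$ by assumption, and for $\alpha\binBCHDG\beta$ by Hypothesis~\ref{hyp:303}), and the element playing the role of "$\beta$" lies in $\cW_{\leq n}$ (true for $\expG(\beta)$, $\expG(\alpha\binBCHDG\beta)$, and $1$ since $\prodG$-exponentials land in $\cW_{\leq n}$). One also has to be careful that $\expG(\alpha)\prodG\expG(\beta)=\expG(\alpha\binBCHDG\beta)$ is an identity in the truncated algebra $\cW_{\leq n}$ and not merely an asymptotic relation — but this is precisely the definition~\eqref{eq:309} of $\binBCHDG$ together with the fact that $\logG$ and $\expG$ are mutually inverse bijections on $\cW_{>0}$-type elements for the product $\prodG$. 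Keeping track of at which point each intermediate function is $\cC^1$ (so that~\eqref{eq:T1} applies, which needs $F[\beta]\in\cC^1$, i.e. the "$\beta$"-argument in $\cW_{\leq n}$) versus merely $\cC^0$ (the remainders) is the only genuinely delicate point, and it is already handled by the regularity clauses in the definition of a Newtonian map.
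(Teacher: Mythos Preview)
Your proof is correct and follows exactly the same approach as the paper: apply the Taylor formula~\eqref{eq:T1} three times --- first to $y_1[\beta]$ with $\beta$-argument $1$, then to $F[\expG(\beta)](y_1[\alpha](\cdot))$, and finally to $y_1[\alpha\binBCHDG\beta]$ with $\beta$-argument $1$ --- and combine using the defining identity $\expG(\alpha)\prodG\expG(\beta)=\expG(\alpha\binBCHDG\beta)$. Your additional bookkeeping on which elements lie in $\cL_{\leq n}$ versus $\cW_{\leq n}$ is a welcome clarification that the paper leaves implicit.
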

\begin{proof}
    Using the Taylor formula on $y_1[\beta]$ with $F[1]$, 
    \begin{equation*}
	y_1[\beta]\circ y_1[\alpha](a)=F[\expG(\beta)](y_1[\alpha](a))+R_1[\beta,1](y_1[\alpha](a)).
    \end{equation*}
    Applying again the Taylor formula on $y_1[\alpha]$ with $F[\expG(\beta)]$, 
    \begin{equation*}
	y_1[\beta]\circ y_1[\alpha](a)
	=
	F[\expG(\alpha)\prodG \expG(\beta)](a)
	+R_1[\alpha,\expG(\beta)](a)
	+R_1[\beta,1](y_1[\alpha](a)).
    \end{equation*}
    Finally, using the Taylor formula on $y_1[\alpha\binBCHDG\beta]$ with $f[1]$ leads to~\eqref{eq:302}.
\end{proof}

We now state the operational counterpart of the composition formula.

\begin{lemma}[Composition formula, operational form]
    Let $F^\dag$ be a Newtonian operator.
    For $\alpha,\beta\in\cL_{\leq n}$, under Hypotheses~\ref{hyp:100}, \ref{hyp:101} and~\ref{hyp:303}, 
    \begin{multline}
	\label{eq:302bis}
	Y_1[\alpha]Y_1[\beta]g
	=Y_1[\alpha\binBCHDG \beta]g
	\\
	+R_1^\dag[\alpha,\expG(\beta)]g
	+R_1^\dag[\beta,1]g\circ Y_1[\alpha]
	-R_1^\dag[\alpha\binBCHDG\beta,1]g
    \end{multline}
    for any suitable function $g$.
\end{lemma}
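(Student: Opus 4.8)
The plan is to mirror the proof of the functional composition formula, replacing each application of the functional Taylor formula \eqref{eq:T1} by the corresponding operational Taylor formula \eqref{eq:T2}, and keeping careful track of the fact that the operators now act on the \emph{right} (by composition on the underlying flows) rather than pointwise on values. Recall from \eqref{eq:ode:9} and Lemma~\ref{lem:equiv} that $Y_t[\alpha]$ is the substitution operator $g\mapsto g\circ y_t[\alpha]$, so that $Y_1[\alpha]Y_1[\beta]g = g\circ y_1[\beta]\circ y_1[\alpha]$; the asymmetry between \eqref{eq:302} and \eqref{eq:302bis} in the order of $\alpha,\beta$ is exactly this contravariance of precomposition, and I would point that out at the very beginning to avoid sign/order confusion later.

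First I would apply the operational Taylor formula \eqref{eq:T2} with $t=1$, with $\alpha$ replaced by $\beta$ and with the second argument taken to be $1$ (the neutral element, so $F^\dag[1]=1$): this gives $Y_1[\beta] = F^\dag[\expG(\beta)] + R_1^\dag[\beta,1]$ as an identity of operators. Composing on the right with $Y_1[\alpha]$ (that is, evaluating both sides at $Y_1[\alpha]g$, or equivalently precomposing with $y_1[\alpha]$), I get
\begin{equation*}
    Y_1[\alpha]Y_1[\beta]g = Y_1[\alpha]F^\dag[\expG(\beta)]g + R_1^\dag[\beta,1]g\circ Y_1[\alpha].
\end{equation*}
Here I use that $R_1^\dag[\beta,1]g$ is itself a function on $\uV$ and that applying $Y_1[\alpha]$ to it means precomposing with $y_1[\alpha]$, which is the term $R_1^\dag[\beta,1]g\circ Y_1[\alpha]$ appearing in \eqref{eq:302bis}. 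I would take a moment to check that $\expG(\beta)\in\cW_{\leq n}$ (it does, by the properties of $\prodG$ recalled in the Notation on $\expG$) so that $F^\dag[\expG(\beta)]$ is a legitimate argument and the remainder is well defined via Hypothesis~\ref{hyp:101}.

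Next I apply \eqref{eq:T2} again, now with the roles as $\alpha$ (the first argument) and $\beta\eqdef\expG(\beta)$ (the second argument), to expand $Y_1[\alpha]F^\dag[\expG(\beta)]$:
\begin{equation*}
    Y_1[\alpha]F^\dag[\expG(\beta)] = F^\dag[\expG(\alpha)\prodG\expG(\beta)] + R_1^\dag[\alpha,\expG(\beta)].
\end{equation*}
By Notation~\ref{not:301} and Hypothesis~\ref{hyp:303}, $\expG(\alpha)\prodG\expG(\beta) = \expG(\alpha\binBCHDG\beta)$ with $\alpha\binBCHDG\beta\in\cL_{\leq n}$, so $F^\dag[\expG(\alpha\binBCHDG\beta)]$ is again a valid target. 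Finally I apply \eqref{eq:T2} a third time with first argument $\alpha\binBCHDG\beta\in\cL_{\leq n}$ and second argument $1$, namely $Y_1[\alpha\binBCHDG\beta] = F^\dag[\expG(\alpha\binBCHDG\beta)] + R_1^\dag[\alpha\binBCHDG\beta,1]$, and substitute $F^\dag[\expG(\alpha\binBCHDG\beta)] = Y_1[\alpha\binBCHDG\beta] - R_1^\dag[\alpha\binBCHDG\beta,1]$ back into the chain of equalities. Collecting the three remainder terms with their signs yields exactly \eqref{eq:302bis}.

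The computation is essentially bookkeeping, so I do not expect a genuine obstacle; the one place to be careful — and the step I would double-check — is the operational meaning of "precomposing a remainder with $Y_1[\alpha]$" versus "applying $Y_1[\alpha]$ to a function", i.e. making sure the term $R_1^\dag[\beta,1]g\circ Y_1[\alpha]$ is interpreted as the function $a\mapsto R_1^\dag[\beta,1]g(y_1[\alpha](a))$ and not as an operator composition, which is consistent with Lemma~\ref{lem:equiv} and with the analogous term $R_1[\beta,1](\Phi[\alpha](a))$ in the functional formula \eqref{eq:302}. A secondary point worth a sentence is that all three invocations of \eqref{eq:T2} require their second arguments ($1$, $\expG(\beta)$, $1$) to lie in $\cW_{\leq n}$ and the first arguments ($\beta$, $\alpha$, $\alpha\binBCHDG\beta$) to lie in $\cL_{\leq n}$, which is where Hypothesis~\ref{hyp:303} is used; and that $g$ being a suitable function for all the operators in sight is inherited from $g$ being suitable for the highest-order operator that appears. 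Since the paper explicitly says "the proof of the operational form is similar so that we skip it" for the Taylor lemma, I would keep this proof correspondingly brief and simply say it is obtained by replacing $F$, $R$, $y_t[\cdot]$ in the proof of \eqref{eq:302} by $F^\dag$, $R^\dag$, $Y_t[\cdot]$, being mindful of the contravariance.
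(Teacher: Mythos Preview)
Your proposal is correct and follows exactly the approach the paper intends: the paper gives no separate proof of \eqref{eq:302bis}, and your argument is precisely the operational transcription of the three successive applications of the Taylor formula used to prove \eqref{eq:302}. One small terminological slip: when you pass from $Y_1[\beta]=F^\dag[\expG(\beta)]+R_1^\dag[\beta,1]$ to $Y_1[\alpha]Y_1[\beta]g$, you are composing with $Y_1[\alpha]$ on the \emph{left} (equivalently precomposing the resulting functions with $y_1[\alpha]$), not on the right; your displayed equation and the subsequent steps are nonetheless correct.
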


\section{Flows of differential equations controlled by rough paths}

\label{sec:flow}

\subsection{Building $\cW$-valued rough paths}

As introduced by T.~Lyons in \cite{lyons98a}, a $p$-rough path is a path with values
in the truncated tensor algebra $\uT_{\leq n}(\uU)$ of a Banach space $\uU$
and of finite $p$-variation. Later, M.~Gubinelli constructed a rough 
path with values in the algebra of rooted trees~\cite{gubinelli10a}.
These constructions extend the notion of differential 
equations.

The notion of rough paths itself is easily extended to a general graded Banach algebra.

\begin{definition}[Control]
    A \emph{control} is a map $\omega:\rTT^2\to\RR_+$ which 
    is super-additive, that is $\omega_{r,s}+\omega_{s,t}\leq \omega_{r,t}$
    and continuous close to its diagonal.
\end{definition}
\begin{hypothesis}
    The graded algebra $\cW$ is a Banach algebra with a metric $\abs{\cdot}$ which satisfies
    $\abs{\alpha\beta}\leq\abs{\alpha}\cdot\abs{\beta}$ for any $\alpha,\beta\in\cW$
    and 
    \begin{equation*}
	\abs{\alpha}=\sum_{i\geq 0}\abs{\alpha^{(i)}}\text{ where }\alpha^{(i)}\in\cW_i,\ i\geq 0
	\text{ and }\alpha=\sum_{i\geq 0 }\alpha^{(i)}.
    \end{equation*}
\end{hypothesis}
\begin{definition}[$\cW$-valued rough paths]
    A $\cW$-valued family $\bx:=(\bx_{s,t})_{(s,t)\in\rTT^2}$ is a \emph{$\cW$-valued $p$-rough path}
    if there exists a $\RR_+$-valued family $\Set{\mu_i}_{i\in\NN}$ such 
    that 
    \begin{itemize}[noitemsep,topsep=-\parskip,partopsep=0pt]
	\item For $i\geq 0$, $\bx^{(i)}_{s,t}\in\cW_i$ with $\bx^{(0)}_{s,t}=1$ and
	    \begin{equation}
		\label{eq:Wrp:1}
		\abs{\bx^{(i)}_{s,t}}\leq \mu_i\omega_{s,t}^{i/p}\text{ for any }i\geq 1,\ (s,t)\in\rTT^2.
	    \end{equation}

	\item The family $\bx$ is \emph{multiplicative}, that is 
	    \begin{equation}
		\label{eq:Wrp:2}
		\bx_{r,s}\bx_{s,t}=\bx_{r,t}\text{ for any }(r,s,t)\in\rTT^3.
	    \end{equation}

	\item For any $(s,t)\in\rTT^2$, $\abs{\bx_{s,t}}$ is finite.
    \end{itemize}
\end{definition}

Since $\cW_{>n}$ is a two-sided ideal, $\cWln$ is 
isomorphic to the quotient space $\cW/\cW_{>n}$.
The Lyons extension theorem is also easily generalized,

\begin{proposition}[Lyons extension theorem]
    Let $\bx$ be a $\cWln$-valued rough path with $n>\floor{p}$. 
    Then there exists a unique $\cW$-valued rough path $\by_{s,t}$
    such that $\bx_{s,t}=\by^{(\leq n)}_{s,t}$, where
    $\alpha^{(\leq n)}:=\sum_{i=0}^n \alpha^{(i)}$. 
\end{proposition}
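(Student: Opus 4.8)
The plan is to mimic the classical Lyons extension theorem for tensor-valued rough paths, replacing the iterated-integral structure by the abstract grading on $\cW$. Given a $\cW_{\leq n}$-valued rough path $\bx$, I want to build its components $\by^{(i)}$ for $i>n$ by a limit of Riemann-type sums over partitions. First I would fix $(s,t)\in\rTT^2$ and a partition $\pi=\{s=u_0<u_1<\dots<u_N=t\}$, and set $\bx^\pi_{s,t}:=\bx_{u_0,u_1}\bx_{u_1,u_2}\cdots\bx_{u_{N-1},u_N}$ computed \emph{in the full algebra} $\cW$ (or, in practice, in $\cW_{\leq m}$ for each fixed truncation level $m$, then pass to the inverse limit). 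Because $\bx$ is only multiplicative modulo $\cW_{>n}$, refining the partition changes $\bx^\pi_{s,t}$ only in degrees $>n$; the degrees $\le n$ are already partition-independent and equal to $\bx_{s,t}$.

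The core estimate is the analogue of the ``neo-classical inequality'' / Lyons' almost-multiplicativity bound. Inserting one extra point $v\in(u_{k-1},u_k)$ replaces the factor $\bx_{u_{k-1},u_k}$ by $\bx_{u_{k-1},v}\bx_{v,u_k}$, and by \eqref{eq:Wrp:2} truncated at level $n$ together with the grading, the difference lives in $\cW_{>n}$ and, using \eqref{eq:Wrp:1} and submultiplicativity $\abs{\alpha\beta}\le\abs\alpha\abs\beta$, each degree-$j$ component of the difference is bounded by a constant times $\omega_{u_{k-1},u_k}^{j/p}$ with $j\ge n+1>\floor p$, hence by something like $(\text{const})\,\omega_{u_{k-1},u_k}^{(n+1)/p}$ with $(n+1)/p>1$. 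Summing over $k$ and using super-additivity of $\omega$ (so that $\sum_k \omega_{u_{k-1},u_k}^{(n+1)/p}\le \big(\max_k\omega_{u_{k-1},u_k}\big)^{(n+1)/p-1}\omega_{s,t}$) shows the net $(\bx^\pi_{s,t})_\pi$ is Cauchy as the mesh shrinks, in each fixed degree $j$; define $\by^{(j)}_{s,t}$ to be the limit (and $\by^{(j)}=\bx^{(j)}$ for $j\le n$). Taking the limit in the estimate also yields $\abs{\by^{(j)}_{s,t}}\le\mu'_j\,\omega_{s,t}^{j/p}$ for suitable $\mu'_j$, so $\by$ satisfies \eqref{eq:Wrp:1}; the degree-by-degree construction, combined with the fact that partitions of $[r,t]$ refine to partitions through $s$, gives the full multiplicativity \eqref{eq:Wrp:2} for $\by$ in $\cW$. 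Finiteness of $\abs{\by_{s,t}}$ follows once one checks the series $\sum_j\mu'_j\omega_{s,t}^{j/p}$ converges, which is where one needs the quantitative form of the bound (the $\mu'_j$ should grow no faster than allows summability, exactly as in the tensor case where a factorial decay $\omega^{j/p}/\Fact{j/p}$ appears).

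For uniqueness, I would argue that any $\cW$-valued rough path $\by'$ with $\by'^{(\le n)}=\bx$ must, by multiplicativity, satisfy $\by'_{s,t}=\by'_{u_0,u_1}\cdots\by'_{u_{N-1},u_N}$ exactly (in $\cW$) for every partition; projecting to $\cW_{\le n}$ this agrees with $\bx^\pi_{s,t}$ in degrees $\le n$, and an induction on the degree $j$ shows that in degree $j$ the value $\by'^{(j)}_{s,t}$ is forced: assuming $\by'$ and $\by$ agree in all degrees $<j$, the degree-$j$ part of the product telescope differs from $\bx^\pi$ only through lower-degree data that already coincide, so letting the mesh go to zero pins down $\by'^{(j)}_{s,t}=\by^{(j)}_{s,t}$. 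Hence $\by'=\by$.

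The main obstacle I anticipate is the quantitative bookkeeping in the almost-multiplicativity estimate: one must track how the constants $\mu_i$ propagate into the higher-degree constants $\mu'_j$ under repeated multiplication and refinement, and verify that the resulting bound still has the super-additive-summable form $C\,\omega_{s,t}^{j/p}$ with the right control on $C=C(j)$ so that both the Cauchy property (needs $(n+1)/p>1$, i.e. $n\ge\floor p$, actually $n>\floor p$ is used to have strict inequality with room to spare) and the convergence of $\sum_j\abs{\by^{(j)}_{s,t}}$ hold. This is precisely the step where, in Lyons' original proof, the neo-classical inequality and the factorial-decay Ansatz enter; here it must be redone abstractly using only submultiplicativity of $\abs\cdot$ and the grading, but no essentially new idea beyond Lyons' is needed — it is the technical heart of the argument.
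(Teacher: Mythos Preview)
Your proposal is correct and is, in essence, the content of the \emph{multiplicative sewing lemma}: build the extension degree by degree as a limit of partition products, control the defect when inserting a point, and recover both the bounds and multiplicativity in the limit. The paper, however, does not redo any of this. Its proof is a two-line observation: since $\bx$ is multiplicative in $\cW_{\leq n}$, the defect $\bx_{r,s}\bx_{s,t}-\bx_{r,t}=\bx_{r,s}\prodB\bx_{s,t}$ lies in $\cW_{>n}$ and is bounded by a constant times $\omega_{r,t}^{(n+1)/p}$; this is precisely the almost-multiplicativity hypothesis of the multiplicative sewing lemma of \cite{lyons98a,feyel}, which is then invoked as a black box to produce the unique multiplicative extension.

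So the difference is one of packaging rather than of idea. Your route is self-contained and makes the mechanism visible (partition products, Cauchy estimate via super-additivity, inductive uniqueness on the degree), at the cost of reproving a known lemma and having to carry out the factorial-decay bookkeeping you flag as the ``main obstacle''. The paper's route is short because that bookkeeping is already encapsulated in the cited sewing lemma; all that remains is to check its hypothesis, which reduces to the grading identity $\alpha\beta=\alpha\prodG\beta+\alpha\prodB\beta$ together with the submultiplicativity of the norm and the bounds \eqref{eq:Wrp:1}. If you want your write-up to match the paper's economy, replace the partition-limit construction by a one-line appeal to the multiplicative (non-commutative) sewing lemma after displaying the bound on $\bx_{r,s}\prodB\bx_{s,t}$.
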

\begin{proof}
    The $\cWln$-rough path $\bx$ is almost multiplicative in~$\cW$, that is 
    \begin{equation*}
	\bx_{r,s,t}:=
	\bx_{r,s}\bx_{s,t}-\bx_{r,t}=\bx_{r,s}\prodB\bx_{s,t}\in\cW_{> n}
	\text{ for any }(r,s,t)\in\rTT^3.
    \end{equation*}
    Therefore,
    \begin{equation*}
	\abs{\bx_{r,s,t}}\leq \omega_{r,t}^{(n+1)/p}(n-1)\abs{\bx_{r,t}}^2.
    \end{equation*}
    The multiplicative sewing lemma \cite{lyons98a,feyel} is sufficient to conclude.
\end{proof}

\subsection{Rough Differential Equations driven by $\cW$-valued rough paths}

\begin{hypothesis}
    \label{hyp:304}
    Fix $n>0$.  There exist 
    \begin{itemize}[noitemsep,topsep=-\parskip,partopsep=0pt]
	\item A submodule $\cL$ of $\cW$ satisfying Hypothesis~\eqref{hyp:101}.
	\item For some $p\geq 1$ and $n\geq \floor{p}$, a $\cW$-valued $p$-rough path $\bx$ 	
	    with $\blambda_{s,t}:=\logG(\bx_{s,t})\in\cL_{\leq n}$.
	\item A Newtonian map $F$.
    \end{itemize}
\end{hypothesis}
\begin{hypothesis}
    \label{hyp:305}
    There exists  $\Set{\nu_i}_{i=1,\dotsc,n}\in\RR^n_+$ such that for any $\alpha\in\cW_n$, 
    \begin{equation*}
	\max\Set*{\normlip{F[\alpha^{(i)}]},\normsup{F[\alpha^{(i)}]}}\leq \nu_i \abs{\alpha^{(i)}}\text{ for }i=1,\dotsc,n.
    \end{equation*}
\end{hypothesis}
\begin{notation}
    We define a family $\phi$ of maps from $\uV$ to $\uV$ by 
    \begin{equation*}
	\phi_{t,s}(a):=y[\blambda_{s,t}]_1(a) \text{ for } a\in\uV,\ (s,t)\in\rTT^2.
    \end{equation*}
\end{notation}
\begin{proposition}
    \label{prop:GDE}
    Under Hypotheses~\ref{hyp:304} and \ref{hyp:305}, 
    if $\cW$ is finite dimensional, then for any $a\in\uV$, 
    there exists a path $y\in\cC^0(\TT,\uV)$, not necessarily unique, 
    such that $y_0(a)=a$ and 
    \begin{equation}
	\label{eq:312}
	\abs{y_t-\phi_{t,s}(y_s)}\leq C\omega_{s,t}^{(n+1)/p}
	\text{ for }(s,t)\in\rTT^2.
    \end{equation}

    Such a path is called a \emph{D-solution} to 
    \begin{equation}
	\label{eq:311}
	y_t=a+\int_0^t F[\vd \bx_{s}](y_s)\vd s\text{ for }t\geq 0.
    \end{equation}
\end{proposition}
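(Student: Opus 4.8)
The plan is to apply the non-linear sewing lemma (as recorded in the papers \cite{bailleul12a,brault1,brault2}) to the family $\Set{\phi_{t,s}}_{(s,t)\in\rTT^2}$. That lemma produces a flow $\Set{\psi_{t,s}}$ with $\abs{\psi_{t,s}(a)-\phi_{t,s}(a)}\leq C\omega_{s,t}^{(n+1)/p}$ provided $\phi$ is an \emph{almost flow}: each $\phi_{t,s}$ is Lipschitz (with a Lipschitz constant close to $1$, controlled uniformly for small $\omega_{s,t}$), and $\phi$ satisfies the approximate cocycle bound
\begin{equation*}
    \abs{\phi_{t,s}\circ\phi_{s,r}(a)-\phi_{t,r}(a)}\leq C\omega_{r,t}^{(n+1)/p}\text{ for any }(r,s,t)\in\rTT^3,\ a\in\uV.
\end{equation*}
Once this is established, setting $y_t(a)=\psi_{t,0}(a)$ gives a continuous path with $y_0(a)=a$, and the bound \eqref{eq:312} is exactly the statement that $y$ is a $D$-solution. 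The finite-dimensionality of $\cW$ is used only to guarantee that $\blambda_{s,t}=\logG(\bx_{s,t})$ lands in a finite-dimensional Lie-type submodule, so that the vector fields $F[\blambda_{s,t}]$ are genuinely Lipschitz and the ODE \eqref{eq:ode:1} has solutions — this is what makes each $\phi_{t,s}$ well defined; it also matches the hypotheses of the non-linear sewing lemma.

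\textbf{The Lipschitz estimate} is the routine half. By Hypothesis~\ref{hyp:305}, $\normlip{F[\blambda^{(i)}_{s,t}]}\leq\nu_i\abs{\blambda^{(i)}_{s,t}}$, and from \eqref{eq:Wrp:1} together with the fact that $\logG$ is built from the products $\prodG$ one gets $\abs{\blambda^{(i)}_{s,t}}\lesssim\omega_{s,t}^{i/p}$. Hence the vector field $F[\blambda_{s,t}]$ has Lipschitz norm $\lesssim\omega_{s,t}^{1/p}$ (the $i=1$ term dominates as $\omega_{s,t}\to0$), and Gronwall applied to \eqref{eq:ode:1} yields $\normlip{\phi_{t,s}-\ci}\lesssim\omega_{s,t}^{1/p}$, which is the required near-identity Lipschitz control. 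One also needs $\normsup{\phi_{t,s}-\ci}\lesssim\omega_{s,t}^{1/p}$, which comes the same way from the $\normsup{\cdot}$ half of Hypothesis~\ref{hyp:305}.

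\textbf{The approximate cocycle bound is the main obstacle, and it is where the composition formula does the work.} The point is that $\phi_{t,s}\circ\phi_{s,r}=y_1[\blambda_{s,t}]\circ y_1[\blambda_{r,s}]$, so the functional composition formula \eqref{eq:302} — applied with $\alpha=\blambda_{r,s}$ and $\beta=\blambda_{s,t}$, both in $\cL_{\leq n}$ by Hypothesis~\ref{hyp:304}, and using Hypothesis~\ref{hyp:303} so that $\alpha\binBCHDG\beta\in\cL_{\leq n}$ — gives
\begin{equation*}
    \phi_{t,s}\circ\phi_{s,r}(a)=y_1[\blambda_{r,s}\binBCHDG\blambda_{s,t}](a)+R_1[\blambda_{r,s},\expG(\blambda_{s,t})](a)+R_1[\blambda_{s,t},1](\phi_{s,r}(a))-R_1[\blambda_{r,s}\binBCHDG\blambda_{s,t},1](a).
\end{equation*}
Now $\expG(\blambda_{r,s})\prodG\expG(\blambda_{s,t})=\bx_{r,s}\prodG\bx_{s,t}$, which by \eqref{eq:314}–\eqref{eq:315} differs from $\bx_{r,t}=\expG(\blambda_{r,t})$ only by the ideal term $\bx_{r,s}\prodB\bx_{s,t}\in\cW_{>n}$; applying $\logG$ and the Lipschitz continuity of $F$-built flows in the driving element, $y_1[\blambda_{r,s}\binBCHDG\blambda_{s,t}]=\phi_{t,r}+\grandO(\abs{\bx_{r,s}\prodB\bx_{s,t}})$, and $\abs{\bx_{r,s}\prodB\bx_{s,t}}\lesssim\omega_{r,t}^{(n+1)/p}$ by \eqref{eq:315} and \eqref{eq:Wrp:1}. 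Each remainder $R_1[\cdot,\cdot]$ is, by the Taylor formula, an integral of $F$ evaluated at a $\prodB$-product; every such product lies in $\cW_{>n}$, so applying Hypothesis~\ref{hyp:305} term by term and the degree-$i/p$ bounds on $\blambda$ gives $\normsup{R_1[\cdot,\cdot]}\lesssim\omega_{r,t}^{(n+1)/p}$ — here the condition $n\geq\floor{p}$ is exactly what makes $(n+1)/p>1$, and the near-identity Lipschitz bound on $\phi_{s,r}$ lets $R_1[\blambda_{s,t},1](\phi_{s,r}(a))$ be estimated uniformly in $a$. Collecting the four contributions gives the approximate cocycle bound with an exponent $(n+1)/p>1$, so the non-linear sewing lemma applies and produces the flow $\psi$; setting $y_t(a)=\psi_{t,0}(a)$ finishes the proof. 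The quantitative dependence of $C$ on $n$ advertised in the introduction is then read off from the explicit constants in Hypothesis~\ref{hyp:305} and the factorial gains in the Taylor remainder, with no recourse to geodesic approximations.
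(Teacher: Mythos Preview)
Your overall strategy matches the paper's: verify that $\phi$ is an almost flow (sup-norm and Lipschitz bounds on $\phi_{t,s}-\ci$ via Gronwall, approximate cocycle via the composition formula~\eqref{eq:302} with $\alpha=\blambda_{r,s}$, $\beta=\blambda_{s,t}$) and then invoke the non-linear sewing lemma \cite[Theorem~2]{brault1}.

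There is, however, an unnecessary detour at the crucial step, and it leans on something you do not prove. You write $y_1[\blambda_{r,s}\binBCHDG\blambda_{s,t}]=\phi_{t,r}+\grandO(\abs{\bx_{r,s}\prodB\bx_{s,t}})$ by appealing to a ``Lipschitz continuity of $F$-built flows in the driving element''. No such continuity result is needed (and you would otherwise have to state and prove it). Since $\bx$ is multiplicative in $\cW$ and $\prodG$ is precisely the projection of the product onto $\cW_{\leq n}$, one has $\expG(\blambda_{r,s})\prodG\expG(\blambda_{s,t})=\bx_{r,s}^{(\leq n)}\prodG\bx_{s,t}^{(\leq n)}=\bx_{r,t}^{(\leq n)}=\expG(\blambda_{r,t})$ \emph{exactly} in $\cW_{\leq n}$; by the definition of $\binBCHDG$ this gives $\blambda_{r,s}\binBCHDG\blambda_{s,t}=\blambda_{r,t}$, hence $y_1[\blambda_{r,s}\binBCHDG\blambda_{s,t}]=\phi_{t,r}$ identically. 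This is how the paper argues, and it also explains why Hypothesis~\ref{hyp:303} (which you invoke but which is not among the stated hypotheses of the proposition) is not required here: membership of $\blambda_{r,s}\binBCHDG\blambda_{s,t}$ in $\cL_{\leq n}$ holds because it \emph{is} $\blambda_{r,t}$, which lies in $\cL_{\leq n}$ by Hypothesis~\ref{hyp:304}. The approximate cocycle bound then comes entirely from the three $R_1$-remainders.

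One smaller point: your account of where finite-dimensionality enters is off. Hypothesis~\ref{hyp:305} already makes $F[\blambda_{s,t}]=\sum_{i=1}^n F[\blambda_{s,t}^{(i)}]$ Lipschitz with constant $\sum_{i=1}^n\nu_i\abs{\blambda_{s,t}^{(i)}}$, so the ODE defining $\phi_{t,s}$ is well posed regardless of $\dim\cW$. The finite-dimensionality hypothesis is there for the \emph{existence} part of the non-linear sewing lemma, which in the bare (non-stable) almost-flow setting uses a compactness argument; compare Proposition~\ref{prop:301}, where existence in infinite dimensions is recovered once $\phi$ is a stable almost flow.
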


\begin{remark}
    Although it is possible to give an explicit bound for $C$
    from the following proof, this constant is improved
    in Section~\ref{sec:decay}.
\end{remark}

\begin{notation}
    \label{not:generalnorm}
    We denote by $\cCb^n$ the set of functions of class $\cC^n$ with are bounded
    with bounded, continuous derivatives up to order $n$. For $g\in\cCb^n$, 
    we set $\generalnorm{\leq n}{g}:=\sum_{k=1}^n \normsup{\dD^k g}$.
\end{notation}

\begin{corollary}[Newton's formula]
    \label{cor:GDE}
    Assume Hypotheses~\ref{hyp:304} and \ref{hyp:305}. Let $F^\dag$ be the Newtonian 
    map related to $F$ through~\eqref{eq:316}. Then for any function $g$ of class $\cCb^n$, 
    \begin{equation*}
	\abs*{g(y_t)-F^\dag\Bra*{\bx^{(\leq n)}_{s,t}}g(y_s) }\leq C\generalnorm{\leq n}{g}\omega_{s,t}^{(n+1)/p}
	\text{ for }(s,t)\in\rTT^2 
    \end{equation*}
    for a constant $C$ 
    whenever $\normsup{F^\dag[\alpha]g}\leq K\abs{\alpha}\cdot\generalnorm{\leq n}{g}$
    for some constant~$K\geq 0$ and any~$\alpha\in\cL_{\leq n}$.
\end{corollary}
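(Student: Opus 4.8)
The plan is to transport Proposition~\ref{prop:GDE} along the test function $g$ by means of the operational Taylor formula of Section~\ref{sec:taylor}. Let $y$ be the D-solution produced by Proposition~\ref{prop:GDE}, so that $\abs{y_t-\phi_{t,s}(y_s)}\leq C\omega_{s,t}^{(n+1)/p}$, where $\phi_{t,s}=y_1[\blambda_{s,t}]$ and $\blambda_{s,t}=\logG(\bx_{s,t})\in\cL_{\leq n}$. First I would apply the operational Taylor formula with $\alpha=\blambda_{s,t}$, $\beta=1$ and $t=1$; using $F^\dag[1]=1$, the identity $\alpha\prodG 1=\alpha$ on $\cW_{\leq n}$, and the fact that $\expG$ inverts $\logG$ on $\cW_{\leq n}$ (so $\expG(\blambda_{s,t})=\bx^{(\leq n)}_{s,t}$), this yields $Y_1[\blambda_{s,t}]g=F^\dag[\bx^{(\leq n)}_{s,t}]g+R_1^\dag[\blambda_{s,t},1]g$ for every suitable $g$. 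Evaluating at $y_s$ and using $Y_1[\blambda_{s,t}]g(y_s)=g(\phi_{t,s}(y_s))$ gives the decomposition
\[
g(y_t)-F^\dag[\bx^{(\leq n)}_{s,t}]g(y_s)=\bigl(g(y_t)-g(\phi_{t,s}(y_s))\bigr)+R_1^\dag[\blambda_{s,t},1]g(y_s),
\]
and it suffices to bound each summand by $C\generalnorm{\leq n}{g}\omega_{s,t}^{(n+1)/p}$.

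The first summand is the easy half: by the mean value inequality and Proposition~\ref{prop:GDE}, $\abs{g(y_t)-g(\phi_{t,s}(y_s))}\leq\normsup{\dD g}\,\abs{y_t-\phi_{t,s}(y_s)}\leq C\normsup{\dD g}\,\omega_{s,t}^{(n+1)/p}$, and $\normsup{\dD g}\leq\generalnorm{\leq n}{g}$ because $n\geq1$. For the second summand I would invoke the integral form of the remainder from the Taylor lemma, $R_1^\dag[\blambda_{s,t},1]g(y_s)=\int_0^1 Y_u[\blambda_{s,t}]F^\dag[\zeta_u]g(y_s)\vd u$ with $\zeta_u:=\blambda_{s,t}\prodB\expG((1-u)\blambda_{s,t})\in\cW_{>n}$. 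Since $Y_u[\blambda_{s,t}]h(y_s)=h(y_u[\blambda_{s,t}](y_s))$ is evaluation along the ODE flow, it does not increase the supremum norm, whence $\abs{R_1^\dag[\blambda_{s,t},1]g(y_s)}\leq\sup_{u\in[0,1]}\normsup{F^\dag[\zeta_u]g}$; so everything comes down to this quantity.

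Controlling $\normsup{F^\dag[\zeta_u]g}$ uniformly in $u$ is the step I expect to be the main obstacle, and it splits into an algebraic and an analytic part. Algebraically, one expands $\zeta_u$ into its finitely many homogeneous components via \eqref{eq:314}--\eqref{eq:315}: each is a product $\blambda^{(i)}_{s,t}\,\eta^{(j)}_u$ with $\eta_u:=\expG((1-u)\blambda_{s,t})$, $i,j\geq1$ and $n+1\leq i+j\leq 2n$. Transferring \eqref{eq:Wrp:1} from $\bx$ to $\blambda_{s,t}=\logG(\bx_{s,t})$ and then to $\eta_u$, and using super-additivity of $\omega$ to collapse products of powers, gives $\abs{\blambda^{(i)}_{s,t}}\leq c_i\,\omega_{s,t}^{i/p}$ and $\abs{\eta^{(j)}_u}\leq c'_j\,\omega_{s,t}^{j/p}$ with constants independent of $u$; since $\omega$ is bounded and $i+j\geq n+1$, summing over components yields $\abs{\zeta_u}\leq C_1\,\omega_{s,t}^{(n+1)/p}$. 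Analytically, the bound assumed in the statement, $\normsup{F^\dag[\alpha]g}\leq K\abs{\alpha}\,\generalnorm{\leq n}{g}$, is only available for $\alpha\in\cL_{\leq n}$, whereas $\zeta_u\in\cW_{>n}$: here I would peel off the $\cL_{\leq n}$-factors $\blambda^{(i)}_{s,t}$ in each homogeneous component through the multiplicative rule $F^\dag[\alpha'\beta']g=F^\dag[\alpha']F^\dag[\beta']g$ and Hypothesis~\ref{hyp:305}, falling back on the assumed estimate; the $\prodG/\prodB$ splitting is exactly what guarantees that the intermediate functions remain suitable and that the assumed bound can be applied at each stage. Combining the two parts gives $\sup_{u}\normsup{F^\dag[\zeta_u]g}\leq C_2\,\generalnorm{\leq n}{g}\,\omega_{s,t}^{(n+1)/p}$, and adding this to the estimate of the first summand yields the claim with a constant $C$ depending only on $p$, $n$, $K$ and the families $\{\mu_i\}$ and $\{\nu_i\}$.
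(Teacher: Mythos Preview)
Your proposal is correct and follows essentially the same route as the paper. Both arguments split
\[
g(y_t)-F^\dag[\bx^{(\leq n)}_{s,t}]g(y_s)=\bigl(g(y_t)-g(\phi_{t,s}(y_s))\bigr)+R_1^\dag[\blambda_{s,t},1]g(y_s),
\]
bound the first term via the Lipschitz constant of $g$ together with Proposition~\ref{prop:GDE}, and bound the second via the homogeneity estimates on $\blambda_{s,t}$ and $\expG(\theta\blambda_{s,t})$; the paper compresses this last step into a one-line appeal to Lemma~\ref{lem:12}, whereas you spell out the decomposition of $\zeta_u$ into components $\blambda^{(i)}_{s,t}\eta^{(j)}_u$ and the peeling argument, which is exactly the content behind that citation.
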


We recall the definition of an \emph{almost flow} (or \emph{approximate flow} in \cite{bailleul12a}), 
here in the context of a Banach space. The definition of \cite{brault1} is slightly weaker. 
For the sake of simplicity, 
we restrict ourselves to bounded functions/vector fields: 
see \textit{e.g.} \cite{brault3} for controls allowing to remove such an assumption.
Also, we may weaken the control over the Lipschitz norm of $F[\alpha]$ to a control over 
its Hölder norm in Hypothesis~\ref{hyp:305}.

\begin{definition}[Almost flow]
    A family  $\phi:=\Set{\phi_{s,t}}_{(s,t)\in\rTT^2}$ of maps from $\uV$ to $\uV$
    is an \emph{almost flow} if for some non-decreasing, continuous function $\delta:\RR_+\to\RR_+$
    with $\delta(0)=0$,
    \begin{itemize}[noitemsep,topsep=-\parskip,partopsep=0pt]
	\item $\phi_{t,t}=\ci$ for any $t\in\uV$.
	\item $\normsup{\phi_{t,s}-\ci}\leq \delta_T$ for any $(s,t)\in\rTT^2$.
	\item $\normlip{\phi_{t,s}}\leq 1+\delta_T$ for any $(s,t)\in\rTT^2$.
	\item For some $\theta>1$ and some constant $L\geq0$,  
	    \begin{equation}
		\label{eq:af}
		\normsup{\phi_{t,s}\circ\phi_{s,r}-\phi_{t,r}}\leq L\omega_{r,t}^\theta
		\text{ for any }(r,s,t)\in\rTT^3.
	    \end{equation}
    \end{itemize}
    Besides,  $\phi$ is a \emph{flow} if $L=0$ in \eqref{eq:af}.
\end{definition}

The proof of the next results is proved using an homogeneity argument.
We skip its proof.

\begin{lemma}
    \label{lem:12}
    There exist  some constants $\lambda_i,L_i\geq0$, $i=1,\dotsc,n$ such that 
    \begin{equation}
	\label{eq:310}
	\abs{\blambda_{s,t}^{(i)}}\leq \lambda_i \omega_{s,t}^{i/p}
	\text{ and }
	\abs{\exp(\theta\blambda_{s,t})^{(i)}}\leq L_i\omega_{s,t}^{i/p}
	\text{ for any }(s,t)\in\rTT^2\text{ and }0\leq \theta\leq 1
    \end{equation}
    for $i=1,\dotsc,n$.
\end{lemma}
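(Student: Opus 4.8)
The plan is to run a straightforward homogeneity argument, using that on the truncated algebra $\cW_{\leq n}$ both $\logG$ and $\expG$ are in fact polynomial maps. First I would note that, since $\bx^{(0)}_{s,t}=1$, the element $\bar\bx_{s,t}:=\sum_{j\geq1}\bx^{(j)}_{s,t}$ has vanishing degree-zero part, so every component of $\bar\bx_{s,t}^{\prodG k}$ has degree $\geq k$, whence $\bar\bx_{s,t}^{\prodG k}=0$ once $k>n$; hence $\blambda_{s,t}=\logG(1+\bar\bx_{s,t})$ is the finite sum dictated by the definition of $\logG$, and for each $1\leq i\leq n$ the component $\blambda^{(i)}_{s,t}$ is a finite $\RR$-linear combination --- with combinatorial coefficients depending only on $n$ --- of products $\bx^{(j_1)}_{s,t}\dotsb\bx^{(j_k)}_{s,t}$ with $j_1,\dotsc,j_k\geq1$ and $j_1+\dotsb+j_k=i$ (for such products $\prodG$ and $\cdot$ coincide, since they already lie in $\cW_i\subseteq\cW_{\leq n}$).

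Next I would bound each such monomial: it is homogeneous of degree $i$, so by submultiplicativity of $\abs\cdot$ and~\eqref{eq:Wrp:1},
\begin{equation*}
    \abs*{\bx^{(j_1)}_{s,t}\dotsb\bx^{(j_k)}_{s,t}}\leq\prod_{l=1}^{k}\mu_{j_l}\,\omega_{s,t}^{j_l/p}=\Bigl(\prod_{l=1}^{k}\mu_{j_l}\Bigr)\omega_{s,t}^{i/p},
\end{equation*}
and summing the finitely many monomials of $\blambda^{(i)}_{s,t}$, weighted by the moduli of their coefficients, yields $\abs{\blambda^{(i)}_{s,t}}\leq\lambda_i\omega_{s,t}^{i/p}$ for a constant $\lambda_i$ depending only on $n,i$ and $\mu_1,\dotsc,\mu_i$ (finite dimensionality of $\cW$ plays no role). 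For the second estimate I would argue the same way: since $\blambda^{(0)}_{s,t}=0$, the exponential series truncates to $\exp(\theta\blambda_{s,t})^{(i)}=\sum_{k=1}^{i}\frac{\theta^k}{k!}\,(\blambda_{s,t}^{\prodG k})^{(i)}$ for $1\leq i\leq n$, with $(\blambda_{s,t}^{\prodG k})^{(i)}=\sum_{j_1+\dotsb+j_k=i}\blambda^{(j_1)}_{s,t}\dotsb\blambda^{(j_k)}_{s,t}$; bounding each factor with the estimate just obtained and using $0\leq\theta\leq1$ to replace $\theta^k/k!$ by $1/k!$ gives $\abs{\exp(\theta\blambda_{s,t})^{(i)}}\leq L_i\omega_{s,t}^{i/p}$, uniformly in $\theta\in[0,1]$, with $L_i:=\sum_{k=1}^{i}\frac1{k!}\sum_{j_1+\dotsb+j_k=i}\prod_{l=1}^{k}\lambda_{j_l}$.

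I do not expect a genuine obstacle here. The two points that deserve (minimal) care are that $\logG$ and $\expG$ truly collapse to polynomial maps --- a consequence of $\cW_{>n}$ being a two-sided ideal together with the vanishing of the degree-zero parts of $\bar\bx_{s,t}$ and $\blambda_{s,t}$ --- and that passing to a fixed homogeneous component does not increase $\abs\cdot$, which is immediate from $\abs\alpha=\sum_i\abs{\alpha^{(i)}}$; the claimed uniformity in $\theta$ then comes for free. One should also note that in degrees $\leq n$ the algebra exponential $\exp$ appearing in the statement coincides with $\expG$, so no separate argument is needed there.
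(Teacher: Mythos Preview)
Your proposal is correct and is precisely the homogeneity argument the paper alludes to; the paper in fact skips the proof, merely stating that it ``is proved using an homogeneity argument,'' and your write-up supplies exactly that. The only details worth flagging are the ones you already address: that $\logG$ and $\expG$ collapse to polynomials on $\cW_{\leq n}$ because the degree-zero parts vanish, and that for $1\leq i\leq n$ the $i$-th components of $\exp(\theta\blambda_{s,t})$ and $\expG(\theta\blambda_{s,t})$ agree (cf.~\eqref{eq:313}), so the statement's use of $\exp$ rather than $\expG$ is immaterial.
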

\begin{proof}[Proof of Proposition~\ref{prop:GDE}]
    We prove first that $\phi$ is an almost flow.
    Since for any $s\in\TT$, $\blambda_{s,s}=0$, $\phi_{s,s}(a)=a$.

    From applications of the Gronwall lemma,
    \begin{align*}
	\normsup{\phi_{t,s}-\ci}\leq N_{s,t}
	\normlip{\phi_{t,s}}\leq N_{s,t}
    \end{align*}
    with 
    \begin{equation*}
	N_{s,t}:=\sum_{i=1}^n \nu_i\abs{\blambda_{s,t}^{(i)}}
	\leq \sum_{i=1}^n \nu_i\lambda_i\omega_{s,t}^{i/p},
    \end{equation*}
    for any $(s,t)\in\rTT^2$. We then set $\delta_T:=N_{0,T}$.

    Since, $\bx_{r,s}\prodG\bx_{s,t}=\bx_{r,t}$ in $\cW_{\leq n}$, 
    it follows from \eqref{eq:309} that $\lambda_{r,s}\binBCHDG \lambda_{s,t}=\lambda_{r,t}$
    for any $(r,s,t)\in\rTT^2$.
    The composition formula yields
    \begin{equation*}
	\phi_{t,s}\circ\phi_{s,r}(a)
	-\phi_{t,r}(a)
	=R[\blambda_{r,s},\bx^{(\leq n)}_{s,t}](a)
	+R[\blambda_{s,t},1](\phi_{r,s}(a))
	-R[\blambda_{r,t},1](a).
    \end{equation*}
    With \eqref{eq:310}, \eqref{eq:Wrp:1} and Hypothesis \eqref{hyp:305}, 
    for some constant $C'$, 
    \begin{equation*}
	\normsup{R[\blambda_{r,s},\bx^{(\leq n)}_{s,t}]}
	\leq \sum_{k=n+1}^{2n}\nu_k\sum_{i+j=k}\Paren*{\sum_{\ell+\ell'=i}\lambda_\ell\Lambda_{\ell'}}
	\mu_i\omega_{r,t}^{i/p}\omega_{s,t}^{j/p}
	\leq C'\omega_{r,t}^{(n+1)/p}
    \end{equation*}
    for any $(r,s,t)\in\rTT^3$.
    With $s=t$, this inequality applies to $\normsup{R[\blambda_{s,t},1]}$.
    Hence, 
    \begin{equation*}
	\normsup{\phi_{t,s}\circ\phi_{s,r}-\phi_{t,r}}\leq C\omega_{r,t}^{(n+1)/p}.
    \end{equation*}
    This proves that $\phi$ is an almost flow. 
    The results follows from \cite[Theorem~2]{brault1}.
\end{proof}
\begin{proof}[Proof of Corollary~\ref{cor:GDE}]
    For any $(s,t)\in\rTT^2$, 
    \begin{multline*}
	\abs*{g(y_t)-F^\dag\Bra*{\bx^{(\leq n)}_{s,t}}g(y_s)}
	\leq 
	\abs{g(y_t)-\phi_{t,s}g(y_s)}
	+\abs*{\phi_{t,s}g(y_s)-F^\dag\Bra*{\bx_{s,t}}g(y_s)}
	\\
	\leq
	\normlip{g}\cdot\abs{y_t-\phi_{t,s}(y_s)}
	+\normsup{R^\dag[\blambda_{s,t},1]g(y_s)}
	\\
	\leq 
	C\normlip{g}\omega_{s,t}^{(n+1)/p}
	+C'\generalnorm{\leq n}{g}\omega_{s,t}^{(n+1)/p}.
    \end{multline*}
    The last inequality follows from Lemma~\ref{lem:12}.
\end{proof}
\begin{remark}
    In the proof of Proposition~\ref{prop:GDE}, when $F$ and $F^\dag$ are related, we have proved that 
    \begin{equation*}
	F^\dag[\bx_{r,s}\bx_{s,t}]\ci(a)=y_1[\blambda_{s,t}](y_1[\blambda_{r,s}](a))+\text{remainder}
    \end{equation*}
    as well as 
    \begin{equation*}
	F^\dag[\bx_{r,s}\bx_{s,t}]\ci(a)=F[\bx_{s,t}](F[\bx_{r,s}](a))+\text{remainder}.
    \end{equation*}
    As the coefficients of $F^\dag[\bx_{s,t}]$ may only be of class $\cCb^1$, 
    $F^\dag[\bx_{r,s}]F^\dag[\bx_{s,t}]$ is not necessarily well defined, while 
    $F[\bx_{s,t}]\circ F[\bx_{r,s}]$ is.
\end{remark}

We now give a sufficient condition to ensure uniqueness. Actually, 
much more results can be given, such as generic properties, 
rate of convergence of numerical schemes and so on as 
for ODE with Lipschitz vector fields \cite{brault1,brault2,brault3}.

\begin{definition}[4-points control]
    A function $g:\uV\to\uV$ satisfies a \emph{4-points control} whenever 
    \begin{equation*}
	\abs{g(a)-g(b)-g(c)+g(d)}
	\leq \widehat{g}(\abs{a-b}\vee\abs{c-d})\times (\abs{a-c}\vee \abs{b-d})
	+g^\oast\abs{a-b-c+d}
    \end{equation*}
    for a non-decreasing, continuous function $\widehat{g}:\RR_+\to\RR_+$ and $C\geq 1$. 
\end{definition}

The proof of the next lemma is a direct consequence of the Gronwall lemma.

\begin{lemma}
    \label{lem:11}
    Let $g$ be a function that satisfies a 4-points control.
    Let $y$ be the solution to the  ODE $y_\tau(a)=a+\int_0^\tau g(y_\sigma(a))\vd \sigma$ 
    (which is necessarily unique since $g$ is Lipschitz continuous). 
    Then $h:=a\mapsto y_1(a)$ satisfies a 4-points control 
    with $\widehat{h}:=\exp(g^\oast)\widehat{g}(\normlip{h})\normlip{h}$ and 
    $h^\oast:=\exp(g^\oast)$, where $\normlip{h}\leq \exp(\normlip{g})$.
\end{lemma}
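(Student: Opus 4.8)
The plan is to control the flow map $h = y_1$ by differentiating the four-point difference along the ODE and applying Gronwall. First I would write $Y_\tau(a) := y_\tau(a)$ and consider the four-point quantity
\[
    D_\tau := Y_\tau(a) - Y_\tau(b) - Y_\tau(c) + Y_\tau(d).
\]
Since $Y_\tau$ solves $Y_\tau(z) = z + \int_0^\tau g(Y_\sigma(z))\vd\sigma$, we have $D_0 = a - b - c + d$ and
\[
    D_\tau = D_0 + \int_0^\tau \Paren*{g(Y_\sigma(a)) - g(Y_\sigma(b)) - g(Y_\sigma(c)) + g(Y_\sigma(d))}\vd\sigma.
\]
The integrand is exactly a four-point difference of $g$ evaluated at the four points $Y_\sigma(a), Y_\sigma(b), Y_\sigma(c), Y_\sigma(d)$, so the $4$-points control on $g$ bounds it by
\[
    \widehat{g}\Paren*{\abs{Y_\sigma(a)-Y_\sigma(b)}\vee\abs{Y_\sigma(c)-Y_\sigma(d)}}
    \times\Paren*{\abs{Y_\sigma(a)-Y_\sigma(c)}\vee\abs{Y_\sigma(b)-Y_\sigma(d)}}
    + g^\oast\abs{D_\sigma}.
\]

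Next I would estimate the three auxiliary quantities appearing on the right. The two-point differences $\abs{Y_\sigma(a)-Y_\sigma(b)}$ etc. are controlled by the Lipschitz norm of the flow: since $g$ is Lipschitz, Gronwall gives $\normlip{Y_\sigma}\leq\exp(\sigma\normlip{g})\leq\exp(\normlip{g}) =: \normlip{h}$ for $\sigma\in[0,1]$, hence
\[
    \abs{Y_\sigma(a)-Y_\sigma(b)}\vee\abs{Y_\sigma(c)-Y_\sigma(d)}\leq \normlip{h}\Paren*{\abs{a-b}\vee\abs{c-d}},
\]
and similarly for the factor $\abs{Y_\sigma(a)-Y_\sigma(c)}\vee\abs{Y_\sigma(b)-Y_\sigma(d)}$. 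Plugging these in and using that $\widehat{g}$ is non-decreasing, the integrand is bounded by
\[
    \widehat{g}(\normlip{h})\,\normlip{h}^2\,\Paren*{\abs{a-b}\vee\abs{c-d}}\Paren*{\abs{a-c}\vee\abs{b-d}} + g^\oast\abs{D_\sigma}.
\]

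Finally I would close the argument with Gronwall's lemma applied to $\tau\mapsto\abs{D_\tau}$: writing $M := \widehat{g}(\normlip{h})\,\normlip{h}^2\,\Paren*{\abs{a-b}\vee\abs{c-d}}\Paren*{\abs{a-c}\vee\abs{b-d}}$ for the constant part, we get $\abs{D_\tau}\leq\Paren*{\abs{D_0} + M\tau}\exp(g^\oast\tau)$, so at $\tau=1$,
\[
    \abs{h(a)-h(b)-h(c)+h(d)} = \abs{D_1} \leq \exp(g^\oast)\,M + \exp(g^\oast)\,\abs{a-b-c+d}.
\]
Wait --- I should double-check the claimed constants: the statement asserts $\widehat{h} = \exp(g^\oast)\widehat{g}(\normlip{h})\normlip{h}$ with only one factor of $\normlip{h}$, whereas the naive bound above produces $\normlip{h}^2$. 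The resolution is that one of the two two-point factors need not be expanded via the flow's Lipschitz constant: one keeps $\abs{Y_\sigma(a)-Y_\sigma(c)}\vee\abs{Y_\sigma(b)-Y_\sigma(d)}\leq\normlip{h}\Paren*{\abs{a-c}\vee\abs{b-d}}$ but leaves the other two-point quantity to be absorbed differently, or --- more likely --- the definition intends $\widehat{g}$ already precomposed so that only one power of $\normlip{h}$ survives; in any case this is a bookkeeping matter. The main obstacle is precisely this constant bookkeeping: matching the $4$-points structure (which is not symmetric in how the two "small" directions enter) against the Lipschitz blow-up of the flow, and making sure the term $g^\oast\abs{D_\sigma}$ is the one fed to Gronwall while the $\widehat{g}$-term is treated as an inhomogeneity. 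Everything else is a routine application of the integral form of the ODE and Gronwall's inequality.
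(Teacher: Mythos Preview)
Your approach is exactly what the paper intends: it states only that the lemma is ``a direct consequence of the Gronwall lemma,'' and your argument---differentiate the four-point quantity $D_\tau$, apply the 4-points control of $g$ to the integrand, bound the two-point factors via the flow's Lipschitz constant, and close with Gronwall---is precisely that.

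Your flagged discrepancy about $\normlip{h}$ versus $\normlip{h}^2$ is a bookkeeping slip on your side, not in the statement. In the 4-points control the quantity $\abs{a-b}\vee\abs{c-d}$ sits \emph{inside} the argument of $\widehat{g}$; it is not a multiplicative factor. So after applying $\abs{Y_\sigma(a)-Y_\sigma(b)}\vee\abs{Y_\sigma(c)-Y_\sigma(d)}\leq \normlip{h}\,(\abs{a-b}\vee\abs{c-d})$ you get
\[
\widehat{g}\bigl(\normlip{h}\,(\abs{a-b}\vee\abs{c-d})\bigr)\cdot \normlip{h}\,(\abs{a-c}\vee\abs{b-d}) + g^\oast\abs{D_\sigma},
\]
with a single explicit factor $\normlip{h}$; the other one is absorbed into the argument of $\widehat{g}$. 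The expression $\widehat{h}=\exp(g^\oast)\,\widehat{g}(\normlip{h})\,\normlip{h}$ in the statement is shorthand for the function $x\mapsto \exp(g^\oast)\,\widehat{g}(\normlip{h}\,x)\,\normlip{h}$, which is exactly what your Gronwall step produces at $\tau=1$.
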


\begin{proposition}
    \label{prop:301}
    Assume that $F[\blambda_{s,t}]$ satisfies a 4-points control 
    for any $(s,t)\in\rTT^2$ with $F[\blambda_{s,t}]^\oast\leq \delta_T$
    and for some $\theta>1$, 
    \begin{equation}
	\label{eq:105}
	\normlip{R[\blambda_{s,r},\bx_{s,t}^{(\leq n)}]}
	+\normlip{R[\bx_{s,t}^{(\leq n)},1]}\leq C\omega_{r,t}^\theta
	\text{ for any }(r,s,t)\in\rTT^3. 
    \end{equation}
    Then $\Set{\phi_{s,t}}$ is a stable almost flow and the D-solution to \eqref{eq:311} is unique.
    The D-solution also exists in an infinite Banach space.
\end{proposition}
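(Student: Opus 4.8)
The plan is to verify that $\{\phi_{s,t}\}$ is a \emph{stable} almost flow --- that is, an almost flow in the sense already proved in Proposition~\ref{prop:GDE}, together with the additional quantitative control on differences $\phi_{t,s}(a)-\phi_{t,s}(b)$ through the 4-points control --- and then invoke the corresponding uniqueness and existence result of the non-linear sewing lemma from \cite{brault1,brault2}. The almost-flow property itself is already established in the proof of Proposition~\ref{prop:GDE} under Hypotheses~\ref{hyp:304} and~\ref{hyp:305}, so the only new content is the stability estimate and the Lipschitz-type control~\eqref{eq:105} on the remainders, which I would feed into the abstract theorem.

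First I would observe that $\phi_{t,s}=y_1[\blambda_{s,t}]$ is the time-one map of the ODE~\eqref{eq:ode:1} driven by the vector field $F[\blambda_{s,t}]$, which by hypothesis satisfies a 4-points control with $F[\blambda_{s,t}]^\oast\leq\delta_T$. Lemma~\ref{lem:11} then applies verbatim: it tells us that $\phi_{t,s}$ itself satisfies a 4-points control, with $\widehat{\phi_{t,s}}=\exp(F[\blambda_{s,t}]^\oast)\,\widehat{F[\blambda_{s,t}]}(\normlip{\phi_{t,s}})\,\normlip{\phi_{t,s}}$ and $\phi_{t,s}^\oast=\exp(F[\blambda_{s,t}]^\oast)\leq\exp(\delta_T)$, where $\normlip{\phi_{t,s}}\leq\exp(\normlip{F[\blambda_{s,t}]})\leq 1+\delta_T$ by Hypothesis~\ref{hyp:305} and Lemma~\ref{lem:12}. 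This gives precisely the extra ingredient needed to promote an almost flow to a \emph{stable} almost flow in the terminology of \cite{brault1}.

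Next I would combine this with the composition formula identity already derived in the proof of Proposition~\ref{prop:GDE},
\begin{equation*}
    \phi_{t,s}\circ\phi_{s,r}(a)-\phi_{t,r}(a)
    =R[\blambda_{r,s},\bx^{(\leq n)}_{s,t}](a)
    +R[\blambda_{s,t},1](\phi_{r,s}(a))
    -R[\blambda_{r,t},1](a),
\end{equation*}
and estimate the \emph{Lipschitz} norm rather than the sup norm of the right-hand side. Using the explicit expression of the Taylor remainder $R_t[\alpha,\beta]$ from the Taylor formula with remainder, the bounds of Lemma~\ref{lem:12} on $\abs{\blambda_{s,t}^{(i)}}$ and $\abs{\exp(\theta\blambda_{s,t})^{(i)}}$, together with assumption~\eqref{eq:105}, one controls $\normlip{\phi_{t,s}\circ\phi_{s,r}-\phi_{t,r}}\leq C\omega_{r,t}^{(n+1)/p}$ with $\theta=(n+1)/p>1$; the Lipschitz norm of a composition with $\phi_{r,s}$ costs only the factor $\normlip{\phi_{r,s}}\leq 1+\delta_T$. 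Together with the sup-norm bound from Proposition~\ref{prop:GDE}, this establishes the hypotheses of the abstract uniqueness theorem.

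Finally I would conclude by citing the relevant statement of the non-linear sewing lemma --- the existence-and-uniqueness theorem for stable almost flows in \cite{brault1,brault2} --- which both yields uniqueness of the D-solution and, crucially, does not require finite-dimensionality of $\cW$ (the finite-dimensionality in Proposition~\ref{prop:GDE} was only used to get existence via a compactness argument, whereas the stable-almost-flow theory constructs the flow by a contraction-type iteration that works in any Banach space). The main obstacle I anticipate is bookkeeping in the Lipschitz estimate of the remainder terms: one must differentiate the composed expressions inside $R[\cdot,\cdot]$ and track how the 4-points constants $\widehat{F[\blambda_{s,t}]}$ and $F[\blambda_{s,t}]^\oast$ propagate through the nested integrals defining $R_t[\alpha,\beta]$, making sure the homogeneity in $\omega_{r,t}$ is preserved at each order $i+j=n+1,\dots,2n$. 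This is essentially the same computation as in the proof of Proposition~\ref{prop:GDE} but performed one derivative higher, so it is routine once~\eqref{eq:105} is taken as a hypothesis.
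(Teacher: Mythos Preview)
The paper does not actually supply a proof of Proposition~\ref{prop:301}; it is stated and then immediately followed by another proposition, so the argument is left implicit. Your proposal is exactly the intended one: apply Lemma~\ref{lem:11} to transfer the 4-points control from $F[\blambda_{s,t}]$ to $\phi_{t,s}$, reuse the composition-formula identity from the proof of Proposition~\ref{prop:GDE} together with the Lipschitz hypothesis~\eqref{eq:105} on the remainders, and then invoke the stable-almost-flow existence/uniqueness theorem of \cite{brault1,brault2}, which does not require $\cW$ to be finite dimensional. There is no alternative route hidden in the paper, and your bookkeeping remarks about the remainder terms are on target.
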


Alternatively, we may consider working on $F[\bx^{(\leq n)}_{s,t}]$ using 
perturbation results \cite{brault1,brault2}.

\begin{proposition}
    The family $\psi$
    defined by  $\psi_{t,s}(a):=F[\bx^{(\leq n)}_{s,t}]$, $(s,t)\in\rTT^2$ 
    is an almost flow with $\normsup{\psi_{s,t}-\phi_{s,t}}\leq C\omega_{s,t}^{\theta\wedge(n+1)/p}$
    for any $(s,t)\in\rTT^2$. If in addition~\eqref{eq:105} holds and~$\phi$ is a stable almost 
    flow, then $\psi$ is also a stable almost flow.
\end{proposition}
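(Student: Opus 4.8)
The plan is to realise $\psi$ as a Taylor truncation of $\phi$, so that the almost-flow (and stability) properties transfer from $\phi$ to $\psi$ with an explicit remainder. First I would apply the Taylor formula with remainder \eqref{eq:T1} with $t=1$, $\alpha=\blambda_{s,t}\in\cL_{\leq n}$ (legitimate by Hypothesis~\ref{hyp:304}) and $\beta=1\in\cW_{\leq n}$. Since $F[1]=\ci$, since $\gamma\prodG 1=\gamma$ for $\gamma\in\cW_{\leq n}$, and since $\expG(\blambda_{s,t})=\expG(\logG(\bx_{s,t}))=\bx^{(\leq n)}_{s,t}$, this produces the identity
\[\phi_{t,s}(a)=y_1[\blambda_{s,t}](a)=F\bigl[\bx^{(\leq n)}_{s,t}\bigr](a)+R_1[\blambda_{s,t},1](a)=\psi_{t,s}(a)+R_1[\blambda_{s,t},1](a),\]
i.e.\ $\psi_{t,s}-\phi_{t,s}=-R_1[\blambda_{s,t},1]$. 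The supremum norm of this remainder is exactly the quantity $\normsup{R[\blambda_{s,t},1]}$ already controlled in the proof of Proposition~\ref{prop:GDE}: the explicit integral form of $R_1$, Lemma~\ref{lem:12} and Hypothesis~\ref{hyp:305} give $\normsup{R_1[\blambda_{s,t},1]}\leq C\omega_{s,t}^{(n+1)/p}$. As $n\geq\floor{p}$ forces $(n+1)/p>1$ and $\theta>1$, absorbing a power of the finite quantity $\omega_{0,T}$ turns this into $\normsup{\psi_{s,t}-\phi_{s,t}}\leq C\omega_{s,t}^{\theta\wedge(n+1)/p}$.

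Next I would check the defining properties of an almost flow for $\psi$. From $\bx^{(\leq n)}_{t,t}=1$ and $F[1]=\ci$ we get $\psi_{t,t}=\ci$. Writing $\psi_{t,s}-\ci=\sum_{i=1}^n F\bigl[\bx^{(i)}_{s,t}\bigr]$ and combining Hypothesis~\ref{hyp:305} with \eqref{eq:Wrp:1} bounds $\normsup{\psi_{t,s}-\ci}$ by $N_{s,t}:=\sum_{i=1}^n\nu_i\mu_i\omega_{s,t}^{i/p}$ and $\normlip{\psi_{t,s}}$ by $1+N_{s,t}$, hence by $\delta'_T$ and $1+\delta'_T$ with $\delta'_T:=\sum_{i=1}^n\nu_i\mu_i\omega_{0,T}^{i/p}$, which supplies the required uniform and Lipschitz bounds. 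For the consistency inequality \eqref{eq:af}, I would set $\rho_{t,s}:=\psi_{t,s}-\phi_{t,s}$ and decompose
\[\psi_{t,s}\circ\psi_{s,r}-\psi_{t,r}=\bigl(\phi_{t,s}\circ\phi_{s,r}-\phi_{t,r}\bigr)+\bigl(\psi_{t,s}\circ\psi_{s,r}-\phi_{t,s}\circ\phi_{s,r}\bigr)-\rho_{t,r}.\]
The first bracket is $\leq L\omega_{r,t}^\theta$ because $\phi$ is an almost flow; the last is $\leq C\omega_{r,t}^{(n+1)/p}$; and the middle one, after inserting $\pm\psi_{t,s}\circ\phi_{s,r}$, is bounded by $\normlip{\psi_{t,s}}\normsup{\rho_{s,r}}+\normsup{\rho_{t,s}}\leq(1+\delta'_T)C\omega_{r,s}^{(n+1)/p}+C\omega_{s,t}^{(n+1)/p}$, which by super-additivity of $\omega$ is $\leq C'\omega_{r,t}^{(n+1)/p}$. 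Summing the three contributions gives $\normsup{\psi_{t,s}\circ\psi_{s,r}-\psi_{t,r}}\leq C''\omega_{r,t}^{\theta\wedge(n+1)/p}$ with exponent strictly above $1$; hence $\psi$ is an almost flow.

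Finally, for the stability claim I would re-run the same $\psi=\phi-R_1[\blambda_{\cdot,\cdot},1]$ decomposition, now feeding in \eqref{eq:105} — which furnishes exactly the Lipschitz-norm control of the remainder terms that played no role above — together with the hypothesis that $\phi$ is a stable almost flow, and verify the stability inequalities for $\psi$; equivalently I would invoke the perturbation results of \cite{brault1,brault2}, according to which a stable almost flow remains stable when perturbed by a family $\rho$ with $\normsup{\rho_{t,s}}+\normlip{\rho_{t,s}}\lesssim\omega_{s,t}^{\vartheta}$ for some $\vartheta>1$. The routine part of the argument is the Taylor identity and the supremum estimates, which merely reproduce computations done for Proposition~\ref{prop:GDE}; the main obstacle I anticipate is the bookkeeping for stability — matching the precise form of the $4$-points/stability conditions of \cite{brault1,brault2} with the remainder bounds available here and confirming that \eqref{eq:105} indeed delivers the Lipschitz (not merely supremum) control of $R_1[\blambda_{s,t},1]$ and of $R_1[\blambda_{s,r},\bx^{(\leq n)}_{s,t}]$ that those conditions require.
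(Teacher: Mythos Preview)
The paper does not give an explicit proof of this proposition: it is stated immediately after the sentence ``Alternatively, we may consider working on $F[\bx^{(\leq n)}_{s,t}]$ using perturbation results \cite{brault1,brault2}'', and the next paragraph begins the decay-estimate section. Your sketch is precisely the natural unpacking of that one-line justification: identify $\psi_{t,s}-\phi_{t,s}=-R_1[\blambda_{s,t},1]$ via the Taylor formula \eqref{eq:T1} with $\beta=1$, recycle the $\omega^{(n+1)/p}$ bound on this remainder from the proof of Proposition~\ref{prop:GDE}, and then either verify the almost-flow axioms for $\psi$ directly (as you do) or invoke the perturbation lemmas of \cite{brault1,brault2}. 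Both routes are legitimate and your computations are correct; in particular the decomposition of $\psi_{t,s}\circ\psi_{s,r}-\psi_{t,r}$ and the use of \eqref{eq:105} to upgrade the sup-norm remainder control to a Lipschitz one for stability are exactly the ingredients those perturbation results require.
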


\subsection{Decay estimate}

\label{sec:decay}

One could see the $\phi_{t,s}$ as numerical integrators and to use 
the almost flow to construct approximation schemes.
This is why \eqref{eq:312}, which corresponds to consistency, 
actually determines the rate of convergence from the knowledge.

An inequality of type~\eqref{eq:312} only state  consistency.  When
$\normlip{\phi_{t,s}}$ is of order $t-s$, for example for ODE, then the
Gronwall lemma gives the rate of convergence.  This is no longer true when
$\phi_{s,t}$ is not of order $t-s$.  In Proposition~\ref{prop:301}, we give a
sufficient condition to get uniqueness and rate of convergence.  The Davie
lemma (See Appendix~\ref{sec:davie}) is a substitute to the Gronwall lemma.
Its requires to already have an
estimate on $\Gamma_{s,t}:=y_t-\phi_{t,s}(y_s)$ of type 
$\abs{\Gamma_{s,t}}\leq C\omega_{s,t}^\theta$ for some $\theta>1$. Given that
$\abs{\Gamma_{r,s,t}}\leq M\omega_{r,t}^\theta$ with
$\Gamma_{r,s,t}=\Gamma_{r,s}+\Gamma_{s,t}-\Gamma_{r,t}$, then $C$ may be
replaced by $KM$ for a constant $K$ depending only on $\omega_{0,T}$ and
$\theta>1$.  This was the strategy proposed first by P.~Friz and N.~Victoir
\cite{friz2008,friz} to study high-order Euler schemes. They however use a geodesic
approximation to get first $\abs{\Gamma_{s,t}}\leq C\omega_{s,t}^\theta$ with a
constant $C$ that depends on the length of an approximation by a smooth path. They
hence obtain a constant $M$  in $\abs{\Gamma_{r,s,t}}\leq M\omega_{r,t}^\theta$
that does not depend on the regularity of the path. 

It is possible to get the constant $C$ explicitly. For this, it relies on
getting bounds on the $\lambda_i$ in \eqref{eq:310}.  However, this is useless
as the constant may be strongly improved.  For this, we follow the general idea
from~\cite{boedihardjo} (see also \cite{boedihardjo18a,boedihardjo18a} for
related works).  The restriction to consider only rough paths with values in
a finite-dimensional space is removed as we do not rely on using a sequence of
smooth approximations of the path.

Here, we consider a Newtonian operator $F^\dag$, as we saw in Lemma~\ref{lem:MtoO} how to transform 
a Newtonian map into a Newtonian operator.  
From now, we denote by~$y$ a D-solution to $y_t=a+\int_0^t F[\vd \bx_r](y_r)\vd r$.

\begin{notation}
    We define \begin{equation}
	\label{eq:317}
	R^{s,t}_j(g):=g(y_t)-F^\dag\Bra*{\bx_{s,t}^{(\leq j)}}g(y_s)
    \end{equation}
    for $j=0,1,2,\dotsc,n$,  $g\in\cC^j$  and any $(s,t)\in\rTT^2$.
\end{notation}
\begin{hypothesis}
    \label{hyp:10}
    Hypothesis~\ref{hyp:305} holds. Besides, 
    there exists a constant $\gamma\geq 0$ such that $n-1+\gamma>p$ and for each $j\in\Set{1,\dotsc,n-1}$ 
    a constant $k_j\geq 0 $ such that 
    \begin{equation}
	\label{eq:73}
	\normsup{R^{s,t}_j(F[\alpha]g)}\leq k_j\abs{\alpha}\cdot\generalnorm{\leq j}{g}\omega_{s,t}^{(\gamma+j)/p}
    \end{equation}
    for any $g\in\cCb^j$ and any $\alpha\in\cW_{\leq j}$.
\end{hypothesis}
\begin{proposition}[Propagation of decay]
    \label{prop:decay}
    Consider Hypothesis~\ref{hyp:10}. We assume that for a constant $A$,
    \begin{equation}
	\label{eq:72}
	\normsup{R^{s,t}_n(g)}\leq A\generalnorm{\leq n}{g}\omega_{s,t}^{(n+\gamma)/p},\text{ for any }0\leq s\leq t\leq T,
    \end{equation}
    for  $g\in\cCb^n$.
    Then \eqref{eq:72} holds true with $A$ replaced by 
    \begin{equation}
	\label{eq:74}
	k_n:=\sup_{\substack{(r,s,t)\in\rTT^3\\r<s<t}}
	\abs{\bx}\dfrac{
	    (1-2^{(p-n-\gamma)/p})^{-1}
	    \sum_{j=1}^n k_{n-j}\nu_j\omega_{r,s}^{(\gamma+n-j)/p}\omega_{s,t}^{j/p}
	}{\omega_{r,t}^{(n+\gamma)/p}}.
    \end{equation}
\end{proposition}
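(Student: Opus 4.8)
The plan is to bootstrap the bound \eqref{eq:72} from scale $T$ down to all scales via a dyadic/Davie-type iteration, using the multiplicativity of $\bx$ and the decomposition of $R^{s,t}_n$ coming from the Taylor formula. First I would fix $g\in\cCb^n$ and, for $(r,s,t)\in\rTT^3$ with $r<s<t$, write out the cocycle-type defect
\begin{equation*}
\Gamma^g_{r,s,t}:=R^{r,t}_n(g)-R^{r,s}_n(g)-\text{(transported remainder on }[s,t]\text{)}.
\end{equation*}
Concretely, from \eqref{eq:317} and the Chen relation $\bx_{r,t}=\bx_{r,s}\bx_{s,t}$ (projected to $\cW_{\leq n}$ this is $\bx_{r,s}\prodG\bx_{s,t}$ up to the $\prodB$ term), one has
\begin{equation*}
g(y_t)-F^\dag[\bx^{(\leq n)}_{r,t}]g(y_r)
=\bigl(g(y_t)-F^\dag[\bx^{(\leq n)}_{s,t}]g(y_s)\bigr)
+\Bigl(F^\dag[\bx^{(\leq n)}_{s,t}]g(y_s)-F^\dag[\bx^{(\leq n)}_{r,t}]g(y_r)\Bigr).
\end{equation*}
The first bracket is $R^{s,t}_n(g)$. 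For the second bracket I would expand $F^\dag[\bx^{(\leq n)}_{s,t}]g(y_s)$ around $y_r$ using the Newton/Taylor formula of the excerpt (the operational form \eqref{eq:T2}), so that it becomes $F^\dag[\bx^{(\leq n)}_{r,s}\prodG\bx^{(\leq n)}_{s,t}]g(y_r)$ plus a remainder, and then use that $\bx^{(\leq n)}_{r,s}\prodG\bx^{(\leq n)}_{s,t}=\bx^{(\leq n)}_{r,t}$ to cancel the leading term against $F^\dag[\bx^{(\leq n)}_{r,t}]g(y_r)$. What survives is a sum of terms each of which is of the shape $R^{r,s}_{n-j}(F[\alpha]g)$ with $\alpha$ a component of $\bx_{s,t}$ of degree $j$, $j=1,\dots,n$, i.e. exactly the objects controlled by Hypothesis~\ref{hyp:10} / \eqref{eq:73}. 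This yields
\begin{equation*}
\normsup{\Gamma^g_{r,s,t}}\le \generalnorm{\leq n}{g}\,\abs{\bx}\sum_{j=1}^n k_{n-j}\nu_j\,\omega_{r,s}^{(\gamma+n-j)/p}\omega_{s,t}^{j/p}=:M\,\omega_{r,t}^{(n+\gamma)/p},
\end{equation*}
after bounding each $\omega_{r,s},\omega_{s,t}\le\omega_{r,t}$ in the exponents to factor out $\omega_{r,t}^{(n+\gamma)/p}$; here $M$ is (up to the $\generalnorm{\leq n}{g}$ factor) the numerator appearing in \eqref{eq:74}.

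Having a super-additive control $\omega$, an a priori bound $\normsup{R^{s,t}_n(g)}\le A\generalnorm{\leq n}{g}\omega_{s,t}^{(n+\gamma)/p}$ (valid at scale $T$, hence trivially at all scales with this same $A$), and a defect bound $\normsup{\Gamma^g_{r,s,t}}\le M\omega_{r,t}^{(n+\gamma)/p}$ with exponent $(n+\gamma)/p>1$, I would invoke the Davie lemma (Appendix~\ref{sec:davie}) in the sup-norm: it upgrades $A$ to $K\,M$ where $K=(1-2^{1-(n+\gamma)/p})^{-1}=(1-2^{(p-n-\gamma)/p})^{-1}$ depends only on the exponent. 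Taking the supremum over $(r,s,t)$ of $M/(\generalnorm{\leq n}{g}\,\omega_{r,t}^{(n+\gamma)/p})$ gives precisely $k_n$ as in \eqref{eq:74}, and since the resulting constant is independent of $g$, we conclude \eqref{eq:72} holds with $A$ replaced by $k_n$. Note the argument is purely analytic in $\uV$ and never uses finite-dimensionality of $\cW$ nor smooth approximations of $\bx$, which is the point of the section.

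The main obstacle I anticipate is the bookkeeping in the expansion of the second bracket: one must verify that after subtracting $F^\dag[\bx^{(\leq n)}_{r,t}]g(y_r)$ every residual term genuinely has the form $R^{r,s}_{n-j}(F[\alpha]g)$ for $\alpha\in\cW_{\leq j}$ — in particular that the degree counting matches so that the $\prodB$-type corrections (which live in $\cW_{>n}$) contribute only at order $\ge (n+1)/p$ and get absorbed, and that the regularity demands are met (here $F[\alpha]g\in\cCb^{n-j}$ is needed for $R^{r,s}_{n-j}$ to make sense, which is why \eqref{eq:73} is stated for $g\in\cCb^j$ with the $\generalnorm{\leq j}{g}$ seminorm). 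Once the recursive structure $R_n \leftrightarrow \{R_{n-j}(F[\cdot]g)\}$ is laid bare, the Davie iteration is standard. A secondary, purely technical point is keeping the norm $\generalnorm{\leq n}{g}$ (versus $\normsup{g}$ or $\normlip{g}$) consistently as the homogeneity weight through the Taylor expansion, so that $k_n$ comes out as a clean $g$-independent constant.
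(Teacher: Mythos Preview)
Your proposal is correct and follows essentially the same route as the paper: derive the three–term identity
\[
R^{r,s}_n(g)+R^{s,t}_n(g)-R^{r,t}_n(g)=-\sum_{j=1}^n R^{r,s}_{n-j}\bigl(F^\dag[\bx^{(j)}_{s,t}]g\bigr),
\]
bound the right-hand side by Hypothesis~\ref{hyp:10}, and feed this into the Davie lemma with exponent $(n+\gamma)/p>1$ to replace $A$ by $k_n$.

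One small correction on the tool you cite: the expansion of $F^\dag[\bx^{(\leq n)}_{s,t}]g(y_s)$ around $y_r$ is not done via \eqref{eq:T2}, which concerns the ODE flow $Y_t[\alpha]$ and would land you at $\phi_{s,r}(y_r)$ rather than at the D-solution point $y_s$. Instead one simply applies the \emph{definition} \eqref{eq:317} at level $n-j$ to the test function $h=F^\dag[\bx^{(j)}_{s,t}]g$, i.e.
\[
F^\dag[\bx^{(j)}_{s,t}]g(y_s)=F^\dag\bigl[\bx^{(\leq n-j)}_{r,s}\bigr]F^\dag[\bx^{(j)}_{s,t}]g(y_r)+R^{r,s}_{n-j}\bigl(F^\dag[\bx^{(j)}_{s,t}]g\bigr),
\]
summing over $j=1,\dotsc,n$ and using the Chen relation in $\cW_{\leq n}$ to cancel the leading pieces against $F^\dag[\bx^{(\leq n)}_{r,t}]g(y_r)$. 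This bypasses the ODE flow entirely and is exactly why no $\prodB$-type corrections appear and the bookkeeping you flag as the main obstacle is in fact clean.
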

\begin{proof}
    Set for $(r,s,t)\in\rTT^3$ and a suitable function $g:\uV\to\uV$, 
    \begin{equation}
	\label{eq:64}
	V_{r,s,t}:=F^\dag[\bx^{(\leq n)}_{s,t}-1]g(y_s)-F^\dag[\bx^{(\leq n)}_{s,t}-1]g(y_r).
    \end{equation}
    Recall that $F^\dag[1]g=g$ so that $F^\dag[\bx^{(\leq n)}_{s,t}]g(a)=F^\dag[\bx^{(\leq n)}_{s,t}-1]g(a)+g(a)$.

    On the one hand, 
    \begin{equation}
	\label{eq:65}
	R^{r,s}_n(g)+R^{s,t}_n(g)
	=g(y_t)-F^\dag[\bx_{r,s}^{(\leq n)}]g(y_r)-F^\dag[\bx^{(\leq n)}_{s,t}-1]g(y_r)-V_{r,s,t}. 
    \end{equation}
    On the other hand, 
    \begin{multline}
	\label{eq:66}
	R^{r,t}_n(g)
	=g(y_t)-\sum_{\substack{0\leq i+j\leq n\\i,j\geq 0}}F^\dag[\bx_{r,s}^{(i)}\bx_{s,t}^{(j)}]g(y_r)
	\\
	=g(y_t)-\sum_{j=1}^n\sum_{i=1}^{n-j}F^\dag[\bx_{r,s}^{(i)}\bx_{s,t}^{(j)}]g(y_r)
	-F^\dag[\bx^{(\leq n)}_{s,t}]g(y_r)-F^\dag[\bx^{(\leq n)}_{r,s}-1]g(y_r).
    \end{multline}
    With \eqref{eq:317}, 
    \begin{multline*}
	F^\dag[\bx^{(j)}_{s,t}]g(y_s)-F^\dag[\bx^{(j)}_{s,t}]g(y_r)
	=F^\dag[\bx^{(\leq n-j)}_{r,s}]F^\dag[\bx^{(j)}_{s,t}]g(y_r)+R_{n-j}^{r,s}(F^\dag[\bx^{(j)}_{s,t}]g)
	\\
	=\sum_{i=1}^{n-1} F^\dag[\bx^{(j)}_{r,s}\bx^{(i)}_{s,t}]g(y_r)+R_{n-j}^{r,s}(F^\dag[\bx^{(j)}_{s,t}]g). 
    \end{multline*}
    With $V_{s,t}$ introduced in \eqref{eq:64}, 
    \begin{equation}
	\label{eq:67}
	-V_{s,t}=\sum_{j=1}^n \sum_{i=1}^{n-j} F^\dag[\bx^{(j)}_{r,s}\bx^{(i)}_{s,t}]g(y_r)
	+\sum_{j=1}^n R^{r,s}_{n-j}(F^\dag[\bx_{s,t}^{(j)}]g)(y_r).
    \end{equation}
    Combining \eqref{eq:65} with \eqref{eq:66} and \eqref{eq:67}, 
    \begin{equation*}
	R^{r,s}_n(g)+R^{s,t}_n(g)-R^{r,t}_n(g)
	=-\sum_{j=1}^n R^{r,s}_{n-j}(F^\dag[\bx^{(j)}_{s,t}]g). 
    \end{equation*}
    Using Hypothesis~\ref{hyp:10}, 
    \begin{equation*}
	\abs{R^{r,t}_n(g)}\leq \abs{R^{r,s}_n(g)}+\abs{R^{s,t}_n(g)}
	+\sum_{j=1}^n \opnorm{g} k_{n-j}\abs{\bx}\nu_j\omega_{r,s}^{(\gamma+n-j)/p}\omega_{s,t}^{j/p}.
    \end{equation*}

    The Davie lemma (Lemma \ref{lem:davie} in Appendix) allows one to conclude.
\end{proof}
An immediate consequence of Proposition~\ref{prop:decay} combined with the neo-classical
inequality \cite[Theorem 1.2]{hara} is the following control similar to the one
in \cite{boedihardjo} without being restricted to finite dimensional rough paths as no smooth
approximation of the path is used.
In \cite{boutaib}, it is proved that the consistency holds at order $(n+1)/\gamma$ even in presence of an infinite
dimensional rough path. Yet the constant is not identified.
Thanks to the Lyons extension theorem 
(See~\cite[Theorem~2.2.1, p.~242]{lyons98a} or~\cite[Theorem~3.1.2]{lyons02b}), 
\eqref{eq:75} and \eqref{eq:76} are satisfied for rough paths by using a proper choice of the control $\omega$.
Actually, with Example~\ref{ex:product}, Proposition~\ref{prop:decay} may be used to prove the
Lyons extension theorem.
\begin{corollary}[Propagation of factorial decay] 
    Let us fix $n\geq 1$ and $\gamma>0$. Assume that  
    \begin{align}
	\label{eq:75}
	k_j&\leq \frac{B}{\Fact{j/p}}\text{ for }j=0,\dotsc,m\text{ with }m+\gamma>p
	\\
	\label{eq:76}
	\text{ and }
	\nu_j&\leq \frac{KB}{\Fact{j/p}}\text{ for }i=0,\dotsc,n
    \end{align}
    for some $K\geq 0$ with 
    \begin{equation*}
	B\leq \sqrt{p(1-2^{(p-m-\gamma)/p})}.
    \end{equation*}
    Then \eqref{eq:75} is also true for $j=m+1,\dotsc,n$.
\end{corollary}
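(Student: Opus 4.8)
The plan is to run an induction on $j$ from $m+1$ up to $n$, feeding the output of Proposition~\ref{prop:decay} back into itself at each step. Suppose by the induction hypothesis that \eqref{eq:75} holds for all indices $0,\dotsc,j-1$ (the base case $j-1=m$ being the assumption of the corollary), and that the condition $j-1+\gamma>p$ needed to apply the proposition at level $j$ is verified — this is automatic since $j-1\geq m$ and $m+\gamma>p$. We also have the bounds \eqref{eq:76} on the $\nu_i$'s at our disposal for all $i\leq n$. Then Proposition~\ref{prop:decay} tells us that $k_j$ (the new constant governing $R^{s,t}_j$) satisfies the bound \eqref{eq:74}, namely
\begin{equation*}
    k_j\leq \sup_{r<s<t}\abs{\bx}\,\frac{(1-2^{(p-j-\gamma)/p})^{-1}\sum_{i=1}^{j}k_{j-i}\nu_i\,\omega_{r,s}^{(\gamma+j-i)/p}\omega_{s,t}^{i/p}}{\omega_{r,t}^{(j+\gamma)/p}}.
\end{equation*}

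The heart of the argument is then purely arithmetic: substitute the factorial bounds $k_{j-i}\leq B/\Fact*{(j-i)/p}$ and $\nu_i\leq KB/\Fact*{i/p}$ into the numerator, which produces a sum of the form
\begin{equation*}
    (1-2^{(p-j-\gamma)/p})^{-1}B^2 K\sum_{i=1}^{j}\frac{1}{\Fact*{(j-i)/p}\,\Fact*{i/p}}\,\omega_{r,s}^{(\gamma+j-i)/p}\omega_{s,t}^{i/p}.
\end{equation*}
Here I would invoke the neo-classical inequality of Hara--Hino \cite[Theorem 1.2]{hara}, which bounds $\sum_{i} \omega_{r,s}^{(j-i)/p}\omega_{s,t}^{i/p}/(\Fact*{(j-i)/p}\Fact*{i/p})$ by a constant (in the cited normalization, $p^2/\Fact*{j/p}$ after absorbing the extra $\omega^{\gamma/p}\leq\omega_{r,t}^{\gamma/p}$ factor appropriately) times $\omega_{r,t}^{j/p}/\Fact*{j/p}$. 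Combined with $\abs{\bx}$ being controlled by the $\omega$ through \eqref{eq:Wrp:1}, the whole supremum collapses to something of the form $C_j\cdot B^2 K/\Fact*{j/p}$ where $C_j$ packages the $(1-2^{(p-j-\gamma)/p})^{-1}$ factor and the neo-classical constant. The constraint $B\leq\sqrt{p(1-2^{(p-m-\gamma)/p})}$ is precisely calibrated so that $B^2\cdot(1-2^{(p-m-\gamma)/p})^{-1}\cdot(\text{neo-classical constant})\leq p\cdot p^{-1}=1$ up to the $K$ factor; one checks that $(1-2^{(p-j-\gamma)/p})^{-1}\leq (1-2^{(p-m-\gamma)/p})^{-1}$ for $j\geq m$ since the exponent $p-j-\gamma$ is more negative, so the bound at level $m$ suffices for all later levels. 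This yields $k_j\leq KB/\Fact*{j/p}$, but in fact we want $k_j\leq B/\Fact*{j/p}$ to close the induction cleanly — so one must be careful about where the $K$ lands; the right reading is that \eqref{eq:75} carries no $K$, so the neo-classical constant together with $B$ must eat the $K\nu$-contribution, meaning the precise constant bookkeeping has to be done so that the product of $B$, the geometric-series factor, and the neo-classical constant is at most $1$, with $K$ already built into the $\nu_j$ bound and cancelling against a reciprocal elsewhere; I would track this explicitly.

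The main obstacle I anticipate is exactly this constant-chasing: making the neo-classical inequality's normalization match the $\Fact*{\cdot/p}$ convention used here, correctly handling the shift by $\gamma$ in the exponents (the neo-classical inequality as usually stated has no $\gamma$, so one writes $\omega_{r,s}^{(\gamma+j-i)/p}=\omega_{r,s}^{\gamma/p}\omega_{r,s}^{(j-i)/p}\leq\omega_{r,t}^{\gamma/p}\omega_{r,s}^{(j-i)/p}$ and pulls the $\gamma$-power out), and verifying that the threshold $B\leq\sqrt{p(1-2^{(p-m-\gamma)/p})}$ is the sharp one that makes the self-map $A\mapsto k_n$ in Proposition~\ref{prop:decay} contractive enough to preserve the factorial rate rather than degrade it. Everything else — the induction skeleton, the applicability of Proposition~\ref{prop:decay} at each level, the monotonicity of the geometric factor in $j$ — is routine. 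I would therefore structure the write-up as: (i) set up the induction and check the hypothesis $j-1+\gamma>p$; (ii) apply Proposition~\ref{prop:decay} and substitute the factorial bounds; (iii) apply the neo-classical inequality and collect constants; (iv) verify the constant inequality using the hypothesis on $B$, closing the induction.
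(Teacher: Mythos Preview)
Your proposal is correct and follows exactly the route the paper indicates: the paper gives no detailed argument but states that the corollary is ``an immediate consequence of Proposition~\ref{prop:decay} combined with the neo-classical inequality \cite[Theorem 1.2]{hara}'', and your four-step plan (induct on $j$, apply Proposition~\ref{prop:decay}, invoke Hara--Hino on the sum after factoring out $\omega_{r,s}^{\gamma/p}\leq\omega_{r,t}^{\gamma/p}$, then collect constants using monotonicity of $(1-2^{(p-j-\gamma)/p})^{-1}$ in $j$) is precisely that consequence unpacked. Your hesitation about the exact landing of $K$ and $\abs{\bx}$ in the final constant inequality is justified --- the paper does not track these explicitly either --- but the structure of the argument is the intended one.
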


\section{Application to ODEs}

\label{sec:ode}

Let us consider a set of letters $I$.
The algebra $\cW$ is the algebra of words (\textit{i.e.}, monomials) freely generated by $I$. 
The unit element of $\cW$ is the empty word $\emptyset$. The length of a word $w$ is denoted by $\abs{w}$.
By convention, $\abs{\emptyset}=0$.
We define $\cW_{k}:=\Set{w\in\cW\given \abs{w}=k}$, so that $\cW$ is graded by $\Set{\cW_k}_{k\geq 0}$.
We transform any $\RR$-valued family $\Set{m_i}_{i\in I}$ into 
a homomorphism from $\cW$ to $\RR$ by $m_{i_1\dotsb i_k}=m_{i_1}\dotsb m_{i_k}$
for any word $i_1\dotsb i_k$ of $\cW$.
Let us fix $n>0$. Each $i\in I$ is associated to a function $f_i:\uV\to\uV$ of class $\cC^{n}$
and globally Lipschitz. 
We define a Newtonian map $F$ through 
\begin{equation*}
    F[\emptyset]=\ci,\ F[i]=f_i\text{ for }i\in I\text{ and } 
    F[iw](a)=\dD F[w](a)\cdot F[i](a)
\end{equation*}
for any $w\in\cW_{\leq n}$.
For $w\in\cW_{\leq n}$, $F[w]\in\cC^{n+1-\abs{w}}$. In particular, $F[iw]$ is continuous
when $\abs{w}=n$ and $i\in I$.
\begin{proposition}
    We set $\alpha=\sum_{i\in I} m_ii\in\cA$ for $m_i\in\RR$.
    Let $y$ be the unique solution to the ODE 
    \begin{equation*}
	y_t=a+\int_0^t \sum_{i\in I} m_i f_i(y_s)\vd s=a+\int_0^t F[\alpha](y_s)\vd s,\ t\geq 0\text{ for }a\in\uV.
    \end{equation*}
    For $w\in\cW_{\leq n}$, 
    \begin{equation}
	\label{eq:taylor:ode}
	F[w](y_t)=\sum_{k=0}^{n-\abs{w}} \frac{t^k}{k!}\sum_{i_1\dotsb i_k\in I}F[m_{i_1\dotsb i_k} i_1\dotsb i_kw](a)
	+R(t,a)
    \end{equation}
    with 
    \begin{equation*}
	R(t,a)=     \int_0^t \sum_{i_1\dotsb i_{n-\abs{w}}\in I} 
	\frac{(t-s)^{n-\abs{w}}}{(n-\abs{w})!}
	F[m_{i_1\dotsb i_{n+1-\abs{w}}} i_1\dotsb i_{n+1-\abs{w}}w](y_s)\vd s.
    \end{equation*}
\end{proposition}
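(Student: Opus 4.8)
The plan is to recognize this proposition as a direct specialization of the already-established "Taylor formula with remainder, functional form" to the word algebra $\cW$ and the concrete Newtonian map $F$ built from the vector fields $f_i$.

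First I would unwind the data into the general framework. The submodule $\cL$ here can be taken to be the span of the letters (plus their projections), and $\alpha = \sum_{i\in I} m_i i$ lies in $\cL_{\leq n}$ since it sits in degree $1$. The solution $y$ of the ODE in the statement is precisely $y_t[\alpha](a)$ of \eqref{eq:ode:1}. So the Taylor formula \eqref{eq:T1} applies verbatim with this $\alpha$ and with $\beta = w$ for any word $w\in\cW_{\leq n}$, giving
\begin{equation*}
    F[w](y_t) = F[\expG(t\alpha)\prodG w](a) + R_t[\alpha,w](a).
\end{equation*}
The remaining work is purely algebraic: compute $\expG(t\alpha)\prodG w$ and the remainder term explicitly in the word algebra.

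Second, I would compute $\expG(t\alpha)$. Since $\alpha\in\cW_1$, we have $\alpha^{\prodG k}$ equals the ordinary product $\alpha^k$ as long as $k\leq n$ (no truncation kicks in below degree $n+1$), and then this gets truncated in $\cW_{\leq n}$; but $\alpha^k = \sum_{i_1,\dots,i_k\in I} m_{i_1}\cdots m_{i_k}\, i_1\cdots i_k = \sum_{i_1\cdots i_k} m_{i_1\cdots i_k}\, i_1\cdots i_k$ using the multiplicativity of $m$. Hence $\expG(t\alpha) = \sum_{k\geq 0} \frac{t^k}{k!}\alpha^k$ truncated to degree $n$. Then $\expG(t\alpha)\prodG w$ projects $\alpha^k w$ onto $\cW_{\leq n}$; since $w$ has length $\abs{w}$ and $\alpha^k$ has length $k$, the term survives iff $k + \abs{w}\leq n$, i.e. $k\leq n-\abs{w}$, and then $\alpha^k w = \sum_{i_1\cdots i_k} m_{i_1\cdots i_k}\, i_1\cdots i_k w$. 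Applying the linear map $F$ term by term reproduces exactly the main sum in \eqref{eq:taylor:ode}, using $F[m_{i_1\cdots i_k} i_1\cdots i_k w] = m_{i_1\cdots i_k} F[i_1\cdots i_k w]$ by linearity.

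Third, I would identify the remainder. From the lemma, $R_t[\alpha,w](a) = \int_0^t F[\alpha \prodB (\expG((t-s)\alpha)\prodG w)](y_s)\vd s$. The inner term $\expG((t-s)\alpha)\prodG w$ is, by the above, $\sum_{k=0}^{n-\abs{w}}\frac{(t-s)^k}{k!}\alpha^k w$; applying $\alpha \prodB (\cdot)$ keeps only the degree-$(n+1)$ part, which forces $k = n-\abs{w}$ and gives $\frac{(t-s)^{n-\abs{w}}}{(n-\abs{w})!}\,\alpha\cdot\alpha^{n-\abs{w}} w = \frac{(t-s)^{n-\abs{w}}}{(n-\abs{w})!}\,\alpha^{n+1-\abs{w}} w$. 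Expanding $\alpha^{n+1-\abs{w}}$ into words and applying $F$ yields precisely the stated $R(t,a)$ (modulo the harmless index-name discrepancy in the displayed formula, where the summation label should run over $i_1\cdots i_{n+1-\abs{w}}$). The only point requiring a word of care is that all the $F[\cdot]$ appearing are well-defined and continuous: the main-sum terms $F[i_1\cdots i_k w]$ have regularity $\cC^{n+1-(k+\abs{w})}\supseteq\cC^1$ since $k+\abs{w}\leq n$, and the remainder integrand $F[i_1\cdots i_{n+1-\abs{w}} w]$ has a word of length exactly $n+1$, hence is $\cC^0$ — exactly the borderline regularity noted just before the proposition. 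I do not expect any genuine obstacle; the "hard part," such as it is, is just bookkeeping to match the general $\prodG/\prodB$ calculus against the explicit word-algebra computations.
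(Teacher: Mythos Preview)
Your proposal is correct and follows essentially the same route as the paper: apply the Taylor formula with remainder \eqref{eq:T1} to $\beta=w$ and then compute $\expG(t\alpha)\prodG w$ and $\alpha\prodB(\expG((t-s)\alpha)\prodG w)$ explicitly in the word algebra, using that $\alpha$ has degree~$1$. The paper's proof is precisely this computation, and you also correctly spotted the index typo in the remainder's summation range.
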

\begin{proof}
    For $\alpha=\sum_{i\in I} m_i i$, and $t\geq 0$, 
    \begin{equation*}
	\exp(t\alpha)
	=\sum_{k\geq 0}\frac{t^k}{k!} \sum_{i_1\dotsb i_k\in I} m_{i_1}\dotsb m_{i_k} i_1\dotsb i_k
	=\sum_{k\geq 0}\frac{t^k}{k!} \sum_{i_1\dotsb i_k\in I} m_{i_1\dotsb i_k} i_1\dotsb i_k.
    \end{equation*}
    Since $\cW_{\leq n}$ contains only words of size smaller than $n$, 
    \begin{gather*}
	\exp(t\alpha)\prodG w=\sum_{k=0}^{n-\abs{w}}\frac{t^k}{k!}\sum_{i_1\dotsb i_k\in I} m_{i_1\dotsb i_k} i_1\dotsb i_kw
	\\
	\text{and }
	\alpha \prodB \exp((t-s) \alpha)\prodG w=\frac{(t-s)^{n-\abs{w}}}{(n-\abs{w})!}
	\sum_{i_1\dotsb i_{n+1-\abs{w}}\in I} m_{i_1\dotsb i_{n+1-\abs{w}}} i_1\dotsb i_{n+1-\abs{w}}w.
    \end{gather*}
    This concludes the proof.
\end{proof}

We now show that it is also useful to recover some standard results on ODE.

\begin{corollary}[{\cite[Lemma~5.4, p.~85]{hairer_10a}}]
    For $i,j$ such that $f_i,f_j\in\cC^1(\uV,\uV)$, 
    we assume that the vector fields $f_i\dD$ and $f_j\dD$ commutes,
    that is $\dD f_j\cdot f_i-\dD f_i\cdot f_j$ vanishes. Then $\Phi[i]\circ\Phi[j]=\Phi[j]\circ\Phi[i]$.
\end{corollary}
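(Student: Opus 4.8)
The plan is to deduce this from the classical fact that commuting vector fields have commuting flows, which is itself a consequence of ODE uniqueness. Throughout I would write $y_t[i]$ and $y_t[j]$ for the flows of $f_i$ and $f_j$ (so $\Phi[i]=y_1[i]$ and $\Phi[j]=y_1[j]$), and note that the hypothesis reads $\dD f_j(a)\cdot f_i(a)=\dD f_i(a)\cdot f_j(a)$ for every $a\in\uV$; in the language of the paper this is precisely the vanishing of the first-order operator $F^\dag[\lieG{i,j}]$.

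First I would establish a \emph{transport identity}: for all $s\geq 0$ and $a\in\uV$,
\[\dD\bigl(y_s[j]\bigr)(a)\cdot f_i(a)=f_i\bigl(y_s[j](a)\bigr).\]
Fixing $a$ and setting $J_s:=\dD(y_s[j])(a)$, the map $y_s[j]$ is $\cC^1$ in its initial condition (since $f_j\in\cC^1$) and $J_s$ solves the variational equation $\dot J_s=\dD f_j(y_s[j](a))\,J_s$ with $J_0=\id$; hence $z_s:=J_s f_i(a)$ solves $\dot z_s=\dD f_j(y_s[j](a))\,z_s$ with $z_0=f_i(a)$. On the other hand $\tilde z_s:=f_i(y_s[j](a))$ is $\cC^1$ in $s$ and, by the chain rule together with the commutation hypothesis, $\dot{\tilde z}_s=\dD f_i(y_s[j](a))\cdot f_j(y_s[j](a))=\dD f_j(y_s[j](a))\cdot f_i(y_s[j](a))=\dD f_j(y_s[j](a))\,\tilde z_s$, with $\tilde z_0=f_i(a)$. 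Since $s\mapsto\dD f_j(y_s[j](a))$ is continuous, uniqueness for linear ODEs forces $z_s=\tilde z_s$, which is the claim.

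Then I would conclude as follows. For fixed $a$, set $w_t:=y_1[j]\bigl(y_t[i](a)\bigr)$, so $w_0=y_1[j](a)$ and, differentiating in $t$ and using the transport identity at $s=1$ with base point $y_t[i](a)$, $\dot w_t=\dD(y_1[j])(y_t[i](a))\cdot f_i(y_t[i](a))=f_i(w_t)$. Thus $w$ solves $\dot w=f_i(w)$ with $w_0=y_1[j](a)$; as $f_i$ is globally Lipschitz, uniqueness gives $w_t=y_t[i]\bigl(y_1[j](a)\bigr)$ for all $t$, and $t=1$ yields $y_1[j]\circ y_1[i]=y_1[i]\circ y_1[j]$, that is $\Phi[i]\circ\Phi[j]=\Phi[j]\circ\Phi[i]$. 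The hypothesis being symmetric in $i$ and $j$, no further argument is needed.

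The only step requiring care — and the one I would write out in detail — is the appeal in the transport step to differentiability of the flow $y_s[j]$ with respect to its initial condition and to the variational equation it obeys; for a $\cC^1$, globally Lipschitz vector field this is standard, and the remaining ingredients (the chain rule, continuity of $s\mapsto\dD f_j(y_s[j](a))$, and uniqueness for linear ODEs with continuous coefficients) are routine.
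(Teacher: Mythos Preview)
Your argument is correct and is the standard textbook route: prove the transport identity $\dD(y_s[j])(a)\cdot f_i(a)=f_i(y_s[j](a))$ via the variational equation and the commutation hypothesis, then use ODE uniqueness to deduce that the flows commute. The only regularity point you flag is genuine but unproblematic here, since $f_j\in\cC^1$ and globally Lipschitz is exactly what is needed for the flow to be $\cC^1$ in the initial condition and for the variational equation to hold.

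The paper, however, proceeds quite differently, because the corollary is placed there precisely to illustrate the machinery of Section~\ref{sec:taylor}. It works at order $n=2$: the truncated BCH formula gives $\expG(t\alpha)\prodG\expG(t\beta)=\expG(t\alpha+t\beta+\tfrac{t^2}{2}\lie{\alpha,\beta})$, and the composition formula (Lemma on \eqref{eq:302}) yields $\Phi[tj]\circ\Phi[ti]=\Phi[(i\star j)(t)]+\epsilon(t)$ with $\abs{\epsilon(t)}\leq Ct^3$. The commutation hypothesis $F[ij]=F[ji]$ makes $(i\star j)(t)$ symmetric in $i,j$, hence $\abs{\Phi[ti]\circ\Phi[tj]-\Phi[tj]\circ\Phi[ti]}\leq Ct^3$, and one concludes by the subdivision/telescoping argument of \cite[Lemma~III.5.4]{hairer_10a}. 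Your proof is more elementary and self-contained; the paper's proof is there to show that its Taylor/composition formalism recovers this classical fact as a byproduct.
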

\begin{proof}
    We set $n=2$. For $\alpha,\beta\in\cA$ and $t\geq 0$, 
    a classical computation shows that 
    \begin{equation*}
	\expG(t\alpha)\prodG\expG(t\beta)=\expG\Paren*{t\alpha+t\beta+\frac{t^2}{2}\lie{\alpha,\beta} }
    \end{equation*}
    with $\lie{\alpha,\beta}:=\alpha\beta-\beta\alpha$. 
    We then define $(\alpha\star\beta)(t):=t\alpha+t\beta-\frac{t^2}{2}\lie{\alpha,\beta}$.
    The composition formula yields
    \begin{equation*}
	\Phi[\gamma(t)]=\Phi[t\beta]\circ\Phi[t\alpha]+\epsilon(t)\text{ with }\abs{\epsilon(t)}\leq Ct^3
    \end{equation*}
    as an explicit computation on \eqref{eq:taylor:ode} shows it.
    If $F[ij]=F[ji]$, then $\Phi[(i\star j)(t)]=\Phi[(i+j)t]$. Hence,
    $\abs{\Phi[t\beta]\circ\Phi[t\alpha]-\Phi[t\alpha]\circ\Phi[t\beta]}\leq Ct^3$.
    We conclude using the technique proposed in \cite[Lemma III.5.4, p.~85]{hairer_10a}.
\end{proof}

Similarly, results such as convergence of the Strang splitting and the Lie-Trotter formula \cite{hairer_10a} can be recovered by this approach.  

\begin{remark}
    \label{rem:ode:1}
    A variant consists in assuming that the weights $m_i$ are piecewise continuous functions from $\TT$
    to $\RR$, so that the ODE is a controlled one, where the $m_i$ are the time derivatives
    of the piecewise continuous control $x:\TT\to\RR^{\dim(I)}$. This leads to similar algebraic
    computations.
\end{remark}
\begin{remark}
    When one considers the non-autonomous differential equation $\dot{y}_t=f(t,y_t)$,
    one may fix some integer $n\geq 1$ and set $m_i(t)=\ind{t\in[iT/n,(i+1)T/n]}$
    as well as $f_i(a)=f(iT/n,a)$. Using Remark~\ref{rem:ode:1}, 
    we then obtain discrete time approximations of the chronological expansion
    developed in \cite{agrachev} as well as the Magnus formula \cite{blanes,strichartz,hairer_10a}.
    This also explains why the Magnus formula is sometimes called 
    a \emph{continuous-time Baker-Campbell-Hausdorff-Dynkin formula}. 
\end{remark}


\section{High-order expansion of rough differential equations}

\label{sec:rde}

We now consider the situation where $\cW$ is a tensor algebra. In this case,
our notion of RDE is the same as the one by A.~M. Davie \cite{davie05a},
P.~Friz \& N.~Victoir \cite{friz2008,friz}, I.~Bailleul \cite{bailleul12a} as
we saw in \cite{brault1,brault2} that the framework of almost flows encompasses
all these constructions.

Here, we focus on studying high-order expansions of the numerical schemes. 
Such a study was initiated in \cite{friz2008}. It was then followed
by \cite{bailleul12a,bailleul13b,bailleul15a,boedihardjo} and more
recently by~\cite{2003.12626} where Runge-Kutta schemes were studied
or in~\cite{2002.10432} for studying rough transport equation.

\begin{notation}[Tensor algebra]
    We consider a separable Banach space $\uU$ with basis $\Set{e_i}_{i\in I}$.  We
    denote by $\uT(\uU)$ its tensor algebra
    \begin{equation*}
	\cW:=\uT(\uU):=\RR\oplus \uU\oplus \uU^{\otimes 2}\oplus\dotsb. 
    \end{equation*}
    The tensor algebra $\uT(\uU)$ is isomorphic to the algebra of
    words generated by the alphabet $I$. It is also graded by the $\cW_k=\uU^{\otimes k}$,  $k\geq 0$.
\end{notation}

Let us fix $n\geq 1$. 
We associate with each $e_i$ a function $f_i$ of class at most $\cC^n$ and globally Lipschitz.
We consider the differential operators defined by 
\begin{equation*}
    F^\dag[1]g:=g,\ F^\dag[e_i]g:=\dD g\cdot f_i\text{ and }F^\dag[\alpha\otimes \beta]g=F^\dag[\alpha]gF^\dag[\beta]g.
\end{equation*}
Thus, $F^\dag$ defines a Newtonian operator, which is easily put in relation with the Newtonian map
of Section~\ref{sec:ode}.
\begin{notation}[Lie algebra]
    We denote by $\cL$ the smallest Lie 
    algebra generated $\uU$ and close under the Lie brackets $\lie{\alpha,\beta}:=\alpha\otimes\beta-\beta\otimes \alpha$. 
\end{notation}
The proof of the following result is standard. It assert that if $\alpha\in\cL_{\leq n}$, 
then $F[\alpha]$ is actually a first order differential operator. It is proved inductively on the levels 
of brackets in $\ug(\uU)$.  
\begin{lemma}
    \label{lem:hoe:1}
    If $\alpha\in\cL$, then $F^\dag[\alpha]g(a)=\dD g(a)\cdot F^\dag[\alpha]\ci(a)$.
\end{lemma}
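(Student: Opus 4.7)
I would prove the lemma by induction on the number of brackets needed to build $\alpha$ inside the Lie algebra $\cL$, using that $\cL$ is the smallest subset of $\uT(\uU)$ containing $\uU$ and closed under $\RR$-linear combinations and the commutator $\lie{\cdot,\cdot}$.

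\textbf{Base case.} For $\alpha = e_i \in \uU$, the definition gives $F^\dag[e_i]g(a) = \dD g(a) \cdot f_i(a)$ and $F^\dag[e_i]\ci(a) = \dD\ci(a) \cdot f_i(a) = f_i(a)$, so the identity is immediate. Linearity of $F^\dag$ extends this to all of $\uU$.

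\textbf{Inductive step.} Assume $\alpha, \beta \in \cL$ each satisfy $F^\dag[\alpha]g(a) = \dD g(a) \cdot F[\alpha](a)$ and $F^\dag[\beta]g(a) = \dD g(a) \cdot F[\beta](a)$, where I write $F[\gamma] := F^\dag[\gamma]\ci$. I compute $F^\dag[\alpha \otimes \beta]g = F^\dag[\alpha]F^\dag[\beta]g$ by applying $F^\dag[\alpha]$ (viewed, via the induction hypothesis, as the first-order operator $\dD(\cdot) \cdot F[\alpha]$) to the function $H := F^\dag[\beta]g = \dD g \cdot F[\beta]$. The product rule for Fréchet derivatives gives
\begin{equation*}
    F^\dag[\alpha]F^\dag[\beta]g(a) = \dD^2 g(a) \cdot (F[\alpha](a), F[\beta](a)) + \dD g(a) \cdot \dD F[\beta](a) \cdot F[\alpha](a),
\end{equation*}
and symmetrically for $F^\dag[\beta]F^\dag[\alpha]g(a)$. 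Because $g$ is of class $\cC^2$, the bilinear form $\dD^2 g(a)$ is symmetric, so the two second-order contributions cancel when subtracted. This yields
\begin{equation*}
    F^\dag[\lie{\alpha,\beta}]g(a) = \dD g(a) \cdot \bigl(\dD F[\beta](a)\cdot F[\alpha](a) - \dD F[\alpha](a)\cdot F[\beta](a)\bigr),
\end{equation*}
and I identify the parenthesized vector field as $F^\dag[\lie{\alpha,\beta}]\ci(a)$ by applying the same computation with $g = \ci$ (whose second derivative vanishes). Closure of the property under $\RR$-linear combinations is immediate from linearity of $F^\dag$.

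The only slightly delicate point is ensuring the needed regularity at each level of bracketing: the induction must be carried out inside $\cL_{\leq n}$, with enough smoothness of the $f_i$ (class $\cC^n$) so that the vector fields $F[\gamma]$ and the test function $g$ involved in each application of the product rule are $\cC^1$ and $\cC^2$ respectively. Keeping track of the grading — that $\lie{\cL_i, \cL_j} \subset \cL_{i+j}$ — ensures we never exceed the available regularity before reaching level $n$. Once that bookkeeping is in place, the argument is a direct application of the symmetry of the Hessian, which is the standard mechanism by which commutators of first-order operators remain first-order.
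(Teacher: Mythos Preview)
Your proof is correct and follows precisely the approach the paper indicates: an induction on the number of nested brackets in $\cL$, with the inductive step resting on the symmetry of the second Fréchet derivative to ensure commutators of first-order operators remain first-order. The paper itself omits the details, stating only that the result is standard and \textquote{proved inductively on the levels of brackets in $\ug(\uU)$}, so your argument fills in exactly what was intended.
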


We refer to \cite{lyons98a,friz,lyons02b,hairer_kelly} for an extensive discussion
about the properties of weak geometric rough paths.

\begin{definition}[Weak geometric rough path]
    Let $\bx$ be a $\cWln$-valued $p$-rough path (possibly $n=+\infty$ with $\cW_{\leq\infty}=\cW$). 
    We say that $\bx$ is a \emph{weak geometric rough path} if $\log(\bx_{s,t})$ belongs 
    to the Lie algebra.
\end{definition}

The Baker-Campbell-Hausdorff-Dynkin formula \cite{bonfiglioli} states for for any $\alpha,\beta\in\cL$, 
there exists a $\gamma\in\cL$, which is explicitly computable from $\alpha$ and $\beta$ such that 
\begin{equation}
    \label{eq:BCHD}
    \exp(\alpha)\otimes\exp(\beta)=\exp(\gamma)
\end{equation}
for some $\gamma\in\cL$, where $\exp$ is the exponential in the tensor product.
Since $\cW_{> n}$ is a two-sided ideal, by identifying $\cW_{\leq n}$ with 
the quotient space $\cW/\cW_{> n}$, there exists $\gamma_n\in\cL_{\leq n}$ such that 
\begin{equation}
    \label{eq:BCHD:G}
    \expG(\alpha)\prodG\expG(\beta)=\expG(\gamma_n),
\end{equation}
as $\prodG$ is identified with the truncated tensor product $\otimes$. Therefore, Hypothesis~\ref{hyp:303}
holds.

Moreover, when $\alpha\in\cL_{\leq n}$, then $F^\dag[\alpha]$ is a first-order
differential operator by Lemma~\ref{lem:hoe:1} as $\cL$ is stable by projection onto $\cW_{\leq n}$. 
This proves that $F^\dag$ is a Newtonian operator
with respect to the choice of the tensor algebra $\uT(\uU)$ for $\cW$ and its Lie 
algebra for $\cL$.

We then recover the high-order expansion given in \cite{friz2008,boedihardjo}
without relying on the use of sub-Riemannian geodesics. Thus, these
results also hold true in an infinite dimensional setting, as already noted
in \cite{bailleul13b,boutaib}. However, with respect to the two last 
works, we benefits from the decay estimate of \cite{boedihardjo}. 


\begin{proposition}[Consistency]
    Under the above hypotheses, when $\bx$ is a finite $p$-rough
    paths and $f_i\in\cC^n$ with $n+1>p$, 
    any D-solution $z$ satisfies
    \begin{equation*}
	\abs{z_t-F^\dag[\bx^{(\leq n)}_{s,t}]\ci(z_s)}\leq C\omega_{s,t}^{(n+1)/p}
	\text{ and }
	\abs{z_t-\phi_{t,s}(z_s)}\leq C\omega_{s,t}^{(n+1)/p}
    \end{equation*}
    where $\phi_{t,s}(a)=y[\logG(\bx_{s,t}^{(\leq n)})](a)$.
\end{proposition}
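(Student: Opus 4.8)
The plan is to verify that the present setting is a special case of Proposition~\ref{prop:GDE} (plus Corollary~\ref{cor:GDE} for the operational statement) and that the decay estimate of Section~\ref{sec:decay} applies to pin down the exponent $(n+1)/p$. First I would check Hypothesis~\ref{hyp:304}: $\cW=\uT(\uU)$ is graded by $\cW_k=\uU^{\otimes k}$, the submodule $\cL$ is the free Lie algebra, which is stable under projection onto $\cW_{\leq n}$ (hence satisfies Hypothesis~\ref{hyp:101}), and $\blambda_{s,t}=\logG(\bx_{s,t})$ lies in $\cL_{\leq n}$ precisely because $\bx$ is a weak geometric rough path. The map $F^\dag$ built from the $f_i$ is a Newtonian operator by the discussion preceding the statement (the key point being Lemma~\ref{lem:hoe:1}: for $\alpha\in\cL_{\leq n}$, $F^\dag[\alpha]$ is a genuine first-order differential operator), and $F[\alpha]:=F^\dag[\alpha]\ci$ is the associated Newtonian map by Lemma~\ref{lem:OtoM}. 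Since $f_i\in\cC^n$ and globally Lipschitz, Hypothesis~\ref{hyp:305} holds with suitable constants $\nu_i$ depending on the $\cC^n$-norms of the $f_i$. The fact that $\bx$ is a finite $p$-rough path supplies the finite-dimensionality required by Proposition~\ref{prop:GDE}, and $n+1>p$ gives $n\geq\floor{p}$.

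Granting all this, Proposition~\ref{prop:GDE} directly produces a D-solution $z$ with $\abs{z_t-\phi_{t,s}(z_s)}\leq C\omega_{s,t}^{(n+1)/p}$, where $\phi_{t,s}(a)=y[\blambda_{s,t}](a)=y[\logG(\bx_{s,t}^{(\leq n)})](a)$, which is the second claimed inequality. For the first one, I would apply Corollary~\ref{cor:GDE} (or rather the underlying identity $\phi_{t,s}=F[\expG(\blambda_{s,t})]+R_1[\blambda_{s,t},1]$ coming from the Taylor formula \eqref{eq:T1} with $\beta=1$): since $\expG(\blambda_{s,t})=\bx_{s,t}^{(\leq n)}$ in $\cW_{\leq n}$, one has $\phi_{t,s}(a)=F^\dag[\bx_{s,t}^{(\leq n)}]\ci(a)+R_1[\blambda_{s,t},1](a)$, and the remainder $R_1[\blambda_{s,t},1]$ is controlled by $C'\omega_{s,t}^{(n+1)/p}$ via Lemma~\ref{lem:12} together with Hypothesis~\ref{hyp:305} exactly as in the proof of Proposition~\ref{prop:GDE}. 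Combining the two bounds by the triangle inequality yields $\abs{z_t-F^\dag[\bx_{s,t}^{(\leq n)}]\ci(z_s)}\leq C\omega_{s,t}^{(n+1)/p}$.

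The genuinely substantive point — and the one I would flag as the main obstacle — is obtaining the exponent $(n+1)/p$ rather than the naive $3/p$ that, as the introduction stresses, comes out of unstructured estimates. This is where the algebraic input is indispensable: the Taylor remainder $R_1[\blambda_{s,t},1]$ is a sum of terms of the form $\dD F[\beta^{(j)}]\cdot F[\blambda_{s,t}^{(i)}]$ with $i+j\geq n+1$, and the weak-geometric hypothesis is what guarantees, through Lemma~\ref{lem:hoe:1}, that no spurious higher-order differential operators appear and that each factor is controlled by $\omega_{s,t}^{i/p}$ and $\omega_{s,t}^{j/p}$ respectively (Lemma~\ref{lem:12}), so the product is $O(\omega_{s,t}^{(n+1)/p})$. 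To sharpen the constant $C$ and to cover the infinite-dimensional case one invokes Proposition~\ref{prop:decay} (propagation of decay) with $\gamma=1$: one checks Hypothesis~\ref{hyp:10}, i.e. that $R^{s,t}_j(F[\alpha]g)$ obeys \eqref{eq:73} for $j<n$ — which follows inductively from the Taylor formula in operational form \eqref{eq:T2} and the bounds on the $f_i$ — and then \eqref{eq:72} propagates to order $n$. I would present the argument as: reduce to the hypotheses of Section~\ref{sec:flow}, quote Proposition~\ref{prop:GDE} and Proposition~\ref{prop:decay}, and spell out only the identification $\expG(\blambda_{s,t})=\bx_{s,t}^{(\leq n)}$ and the remainder estimate, since the rest is routine.
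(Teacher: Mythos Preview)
Your proposal is correct and matches the paper's intended argument: the paper does not write out a separate proof for this proposition but presents it as a direct specialisation of the general framework (Proposition~\ref{prop:GDE}, Corollary~\ref{cor:GDE}, and the decay estimate Proposition~\ref{prop:decay}) once one has verified that $\cW=\uT(\uU)$ with $\cL$ the free Lie algebra satisfies Hypotheses~\ref{hyp:100}--\ref{hyp:303} and that $F^\dag$ is a Newtonian operator via Lemma~\ref{lem:hoe:1}. Your identification of the key step---that the weak-geometric hypothesis forces $\blambda_{s,t}\in\cL_{\leq n}$, so every $F^\dag[\blambda_{s,t}^{(i)}]$ is genuinely first order and the remainder $R_1[\blambda_{s,t},1]$ picks up the full exponent $(n+1)/p$ through Lemma~\ref{lem:12}---is exactly the mechanism the paper relies on; the only cosmetic point is that the second inequality is really the \emph{definition} of a D-solution (equation~\eqref{eq:312}), so Proposition~\ref{prop:GDE} is invoked for existence rather than for that estimate.
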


Coupled with the results in \cite{brault2,brault3}, we also obtain a rate 
of convergence of $(n+1)/p-1$ of the numerical scheme. This is the same rate
as in~\cite[Theorem~10.3.3]{friz}.


\section{Branched rough paths}

\label{sec:BRP}

As in Sections~\ref{sec:ode} and \ref{sec:rde}, we consider an alphabet $I$
whose letters $i$ are associated to functions $f_i$. We introduce
first a few notions on trees. 

\begin{definition}[Decorated tree]
    A \emph{$I$-decorated tree} is a tree whose vertices are associated to 
    a letter $i$ in $I$.
    We denote by $\cT$ the set of decorated trees.
\end{definition}
\begin{notation}[Forest]
    The formal, commutative and associative product of trees is called a \emph{forest}. 
    A forest can be seen as a monomial of the $\RR$-module $\RR\bkt{\cT}$ freely generated
    by the set $\cT$. The neutral element of $\RR\bkt{\cT}$, called the \emph{empty tree}, 
    is denoted by $\emptyset$.
\end{notation}
\begin{definition}[Grafting]
    Let us consider $m$ trees $\tau_1\dotsb\tau_m\in\cT$ as well as a decoration $i\in I$.
    We denote by $\Graft{\tau_1,\dotsc,\tau_m}_i$ the tree obtained by grafting the roots of $\tau_1,\dotsc,\tau_m$
    to a new vertex which we decorate with $i$.
    The tree with a single root decorated by $i$ is written~$\bullet_i$.
    The number of nodes of $\tau\in\cT$ is denoted by $\abs{\tau}$, with $\abs{\emptyset}=0$.
    It holds that $\abs{\Graft{\tau_1,\dotsc,\tau_m}}=\abs{\tau_1}+\dotsb+\abs{\tau_m}+1$.
    The gratfting operation $(\tau_1,\dotsc,\tau_m)\mapsto \Graft{\tau_1,\dotsc,\tau_m}$
    is extended to $\RR\bkt{\cT}^m$ by multilinearity.
\end{definition}

Decorated rooted trees are defined recursively through the grafting operation
from the family of single root trees $\Set{\bullet_i}_{i\in I}$.


\subsection{B-series}

From the set $\Set{f_i}_{i\in I}$ of functions, we define 
the elementary differentials which is a family of functions indiced
by $I$-decorated rooted trees. The \emph{B} in B-series stands for J.C. Butcher, 
who was the first to exhibit their algebraic features 
to study Runge-Kutta schemes in \cite{Butcher1972}.

\begin{definition}[Elementary differentials]
    To each $I$-decorated rooted tree $\tau$ in $\cT$, we associate the function $F[\tau]:\uV\to\uV$
    by the following way 
    \begin{gather*}
	F[\emptyset](a)=a,\ F[\bullet_i](a)=f_i(a)
	\\
	\text{ and }
	F[\Graft{\tau_1,\dotsc,\tau_m}_i](a)=\dD^m f_i(a)\cdot 
	F[\tau_1](a)\otimes\dotsb\otimes F[\tau_m](a), 
    \end{gather*}
    as long as the $f_i$ have enough regularity to ensure that $F[\tau]$ is continuous.
    We extend $F$ by linearity over $\RR\bkt{\cT}$.
    Such an $F$ is called an \emph{elementary differential}.
\end{definition}

To apply our construction, we define a new algebra $(\cW,+,\cdot)$ from 
$\RR\bkt{\cT}$ with the product defined as a bilinear relation recursively from 
\begin{align*}
    \sigma\cdot\bullet_i&=\Graft{\sigma}_i
    \\
    \text{and }
    \sigma\cdot\Graft{\tau_1,\dotsc,\tau_m}_i
			&=\Graft{\sigma,\tau_1,\dotsc,\tau_m}_i
			+\sum_{j=1}^m \Graft{\tau_1,\dotsc,\sigma\cdot\tau_j,\dotsc,\tau_m}_i.
\end{align*}
This algebra $\cW$ is graded by the number of nodes.

\begin{remark}
    The algebra $(\cW,+,\cdot)$ is non-associative so that $\tau_1\dotsb\tau_m$
    means $\tau_1\cdot (\dotsb(\tau_{m-1}\cdot\tau_m)\cdots)$.
\end{remark}

\begin{hypothesis}
    Let us fix $n\geq 1$ such that for each $\tau\in\cW_{\leq n}$, $F[\tau]\in\cC^1$.
\end{hypothesis}

An immediate consequence of the definition of our product is the following one. 

\begin{lemma}
     The map $F$ is a Newtonian with $\cL=\cW$. 
\end{lemma}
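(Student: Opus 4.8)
The plan is to verify the four defining properties of a Newtonian map directly from the definitions of the elementary differential $F$ and the non-associative product $\cdot$ on $\cW$, with the grafting-based product formula doing all the work in the last (and only nontrivial) property.

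First I would check the easy points. By definition $F[\emptyset](a)=a$, so $F[\emptyset]=\ci$, which is the unit condition (here the empty tree plays the role of $1\in\cW_0$). The regularity conditions — $F[\tau]\in\cC^1$ for $\tau\in\cW_{\leq n}$ and $F[\tau]$ Lipschitz for $\tau\in\cL_{\leq n}=\cW_{\leq n}$ — are exactly the content of the standing Hypothesis just stated (together with the global Lipschitz assumption on the $f_i$ and the fact that each $f_i$ is $\cC^n$, so that grafting up to $n$ nodes still yields a $\cC^1$ function). So these require no argument beyond citing the hypotheses; I would state $\cL=\cW$ and note $\cL$ trivially satisfies Hypothesis~\ref{hyp:101} since it is all of $\cW$ and hence stable under the grading projections.

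The substantive step is the Newton/Leibniz identity $F[\sigma\cdot\beta]=\dD F[\beta]\cdot F[\sigma]$ for $\sigma,\beta\in\cW$. By bilinearity of both sides in $\beta$ it suffices to treat $\beta=\tau$ a single tree, and I would induct on $\abs{\tau}$. The base case $\tau=\bullet_i$: the product gives $\sigma\cdot\bullet_i=\Graft{\sigma}_i$, and by the definition of elementary differentials $F[\Graft{\sigma}_i](a)=\dD f_i(a)\cdot F[\sigma](a)=\dD F[\bullet_i](a)\cdot F[\sigma](a)$, as wanted. For the inductive step take $\tau=\Graft{\tau_1,\dots,\tau_m}_i$; then
\begin{equation*}
    \sigma\cdot\tau=\Graft{\sigma,\tau_1,\dots,\tau_m}_i+\sum_{j=1}^m\Graft{\tau_1,\dots,\sigma\cdot\tau_j,\dots,\tau_m}_i.
\end{equation*}
Applying $F$ and using its definition on each grafted tree, the first term contributes $\dD^{m+1}f_i(a)\cdot F[\sigma]\otimes F[\tau_1]\otimes\dots\otimes F[\tau_m]$, and the $j$-th term in the sum contributes $\dD^m f_i(a)\cdot F[\tau_1]\otimes\dots\otimes F[\sigma\cdot\tau_j]\otimes\dots\otimes F[\tau_m]$, which by the induction hypothesis equals $\dD^m f_i(a)\cdot F[\tau_1]\otimes\dots\otimes(\dD F[\tau_j]\cdot F[\sigma])\otimes\dots\otimes F[\tau_m]$. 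Summing, this is precisely the Leibniz expansion of $\dD\bigl(a\mapsto \dD^m f_i(a)\cdot F[\tau_1](a)\otimes\dots\otimes F[\tau_m](a)\bigr)\cdot F[\sigma]=\dD F[\tau]\cdot F[\sigma]$, using the chain rule together with symmetry of higher derivatives (so that differentiating the $f_i$-slot produces one extra derivative applied to $F[\sigma]$). That closes the induction.

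The main obstacle, such as it is, is purely bookkeeping: one must match the expansion of $\dD F[\Graft{\tau_1,\dots,\tau_m}_i]\cdot F[\sigma]$ obtained from the product rule on an $(m{+}1)$-linear expression against the sum produced by the grafting product, and be careful that the assumptions guarantee all the derivatives appearing actually exist (i.e.\ one only ever differentiates $F[\tau]$ for $\tau\in\cW_{\leq n}$, where $\cC^1$ regularity is assumed). No analytic estimates are needed — the statement is algebraic once the regularity hypotheses are granted — so I would keep the proof to the short induction above and simply remark that the restriction to $\cWln$ in the definition of Newtonian map is exactly what the standing Hypothesis provides.
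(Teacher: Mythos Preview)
Your proof is correct and is exactly the natural unpacking of what the paper means when it calls the lemma ``an immediate consequence of the definition of our product'': the paper gives no proof at all, and your induction on $\abs{\tau}$, matching the Leibniz expansion of $\dD F[\Graft{\tau_1,\dots,\tau_m}_i]\cdot F[\sigma]$ against the recursive definition of $\sigma\cdot\tau$, is precisely the verification that definition was designed to make trivial. There is nothing to add.
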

\begin{example}
    \label{ex:bseries}
    Assume that $I$ contains only a single letter, so that we drop any mention to it. Let 
    $y$ be the solution to the ODE $y_t=a+\int_0^t f(y_s)\vd s=a+\int_0^t F[\bullet](y_s)\vd s$. 
    The Taylor formula at order $n$ implies that 
    \begin{equation*}
	y_t=a+F[\expG(t\bullet)](a)+\grandO(t^{n+1}),  
    \end{equation*}
    which in turns means that $y^{(m)}_{|t=0}(a)=F[\bullet^{\cdot m}](a)$, 
    where $y^{(m)}$ is the $m$-th order derivative of $y$ (in time).
\end{example}

For the reason given in Example~\ref{ex:bseries}, $F[\gamma]$ is called
a \emph{B-series}. 

\begin{proposition} Let $\bx:=\Set{\bx_{s,t}}_{(s,t)\in\rTT^2}$ be 
    a $\cW_{\leq n}$-valued $p$-rough path with $n\geq \floor{p}$. 
    Then there exists a D-solution $y$ associated to the almost flow
    $\phi$ defined by $\phi_{t,s}(a):=F[\bx_{s,t}](a)$ and 
    \begin{equation*}
	\abs{y_t-F[\bx_{s,t}](y_s)}\leq C\omega_{s,t}^{(n+1)/p},\
	\text{ for }(s,t)\in\rTT^2.
    \end{equation*}
\end{proposition}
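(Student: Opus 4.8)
The plan is to identify the family $\phi_{t,s}(a) := F[\bx_{s,t}](a)$ as an almost flow in the sense of the definition above, and then invoke \cite[Theorem~2]{brault1} (as in the proof of Proposition~\ref{prop:GDE}) to produce the D-solution $y$ satisfying the stated estimate. First I would check the structural conditions: $\phi_{t,t} = F[\bx_{t,t}] = F[1] = \ci$ since $\bx^{(0)}_{t,t}=1$ and $\bx^{(i)}_{t,t}=0$ for $i\geq 1$ by the rough path bound \eqref{eq:Wrp:1} with $\omega_{t,t}=0$; and the bounds $\normsup{\phi_{t,s}-\ci}\leq\delta_T$ and $\normlip{\phi_{t,s}}\leq 1+\delta_T$ follow from Hypothesis~\ref{hyp:305} applied to each component $\bx^{(i)}_{s,t}$ together with \eqref{eq:Wrp:1}, exactly as $N_{s,t}$ was estimated in the proof of Proposition~\ref{prop:GDE}. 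Here one should note that since $\cL=\cW$ for this Newtonian map, no restriction on $\bx$ beyond being a $\cW_{\leq n}$-valued $p$-rough path is needed, and by the Lyons extension theorem we may regard $\bx$ as a genuine $\cW$-valued rough path with $\bx_{s,t}=\by^{(\leq n)}_{s,t}$.

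The heart of the argument is the consistency bound \eqref{eq:af}, i.e.\ controlling $F[\bx_{r,s}]\circ F[\bx_{s,t}] - F[\bx_{r,t}]$ (taking care of the order of composition). Here I would use the composition formula, but it is cleaner to argue directly from the Taylor formula with remainder. The idea: by Chen's relation $\bx_{r,t}=\bx_{r,s}\bx_{s,t}$, and since $\prodG$ is the projection of the product onto $\cW_{\leq n}$, one has $\bx_{r,t}^{(\leq n)}=\bx_{r,s}\prodG\bx_{s,t}$. Using the Taylor formula \eqref{eq:T1} with $\alpha=\blambda_{r,s}:=\logG(\bx_{r,s})$ and $\beta=\bx_{s,t}^{(\leq n)}$, one has $F[\bx_{s,t}](y_1[\blambda_{r,s}](a))=F[\bx_{r,s}\prodG\bx_{s,t}](a)+R_1[\blambda_{r,s},\bx_{s,t}](a)$; but $\phi_{t,s}$ here is $F[\bx_{s,t}]$ directly rather than $y_1[\blambda_{s,t}]$, so I would instead compare the two via a Taylor expansion of $F[\bx_{s,t}](a)=F[\expG(\blambda_{s,t})](a)$ against $y_1[\blambda_{s,t}](a)=F[\expG(\blambda_{s,t})](a)+R_1[\blambda_{s,t},1](a)$, reducing to the estimates already carried out in Proposition~\ref{prop:GDE}. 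Concretely: write $F[\bx_{r,s}]\circ F[\bx_{s,t}] - F[\bx_{r,t}]$ as a sum of a term handled by the functional composition formula \eqref{eq:302} applied to $y_1[\blambda_{r,s}]$ and $y_1[\blambda_{s,t}]$, plus correction terms of the form $R_1[\,\cdot\,,1]$, each of which is a continuous function bounded by $C\omega^{(n+1)/p}$ via Lemma~\ref{lem:12} and Hypothesis~\ref{hyp:305}, using $\blambda_{r,s}\binBCHDG\blambda_{s,t}=\blambda_{r,t}$ (which holds since $\cL=\cW$ makes Hypothesis~\ref{hyp:303} automatic).

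The main obstacle I expect is bookkeeping the mismatch between the almost flow $\phi_{t,s}=F[\bx_{s,t}]$ (a "Davie-type" scheme) and the ODE-flow scheme $y_1[\blambda_{s,t}]$ used in Proposition~\ref{prop:GDE}: one must show $\normsup{F[\bx_{s,t}]-y_1[\blambda_{s,t}]}\leq C\omega_{s,t}^{(n+1)/p}$, which is precisely the remainder $R_1[\blambda_{s,t},1]$ from the Taylor formula, bounded using $\blambda_{s,t}\prodB\expG((1-u)\blambda_{s,t})\in\cW_{>n}$ and the component bounds of Lemma~\ref{lem:12}. Once this is in hand, $\phi$ and the family $\psi$ from Proposition~\ref{prop:GDE} differ by $O(\omega^{(n+1)/p})$, so $\phi$ inherits the almost-flow property with $\theta=(n+1)/p>1$ (by $n\geq\floor{p}$), and \cite[Theorem~2]{brault1} yields a path $y$ with $y_0(a)=a$ and $\abs{y_t-\phi_{t,s}(y_s)}\leq C\omega_{s,t}^{(n+1)/p}$, as claimed.
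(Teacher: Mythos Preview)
Your proposal is correct and matches the paper's (implicit) argument. The paper does not write out a proof for this proposition: it is presented as an immediate instance of the general machinery, namely Proposition~\ref{prop:GDE} applied with $\cL=\cW$ (so that $\blambda_{s,t}=\logG(\bx_{s,t})\in\cL_{\leq n}$ automatically and Hypothesis~\ref{hyp:303} is trivially satisfied), combined with the perturbation proposition at the end of Section~\ref{sec:flow} stating that $\psi_{t,s}(a):=F[\bx^{(\leq n)}_{s,t}](a)$ is an almost flow with $\normsup{\psi_{t,s}-\phi_{t,s}}\leq C\omega_{s,t}^{(n+1)/p}$. Your plan reconstructs exactly this reduction: compare $F[\bx_{s,t}]$ to $y_1[\blambda_{s,t}]$ via the Taylor remainder $R_1[\blambda_{s,t},1]$, transfer the almost-flow bound, and invoke \cite[Theorem~2]{brault1}.

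One minor correction of bookkeeping: the consistency condition for an almost flow reads $\phi_{t,s}\circ\phi_{s,r}-\phi_{t,r}$, which with $\phi_{t,s}=F[\bx_{s,t}]$ becomes $F[\bx_{s,t}]\circ F[\bx_{r,s}]-F[\bx_{r,t}]$, not $F[\bx_{r,s}]\circ F[\bx_{s,t}]-F[\bx_{r,t}]$ as you wrote. You flagged the need for care here, and indeed the Taylor/composition formula is set up so that $F[\beta](y_1[\alpha](a))\approx F[\expG(\alpha)\prodG\beta](a)$, which matches $F[\bx_{s,t}](y_1[\blambda_{r,s}](a))\approx F[\bx_{r,s}\prodG\bx_{s,t}](a)=F[\bx_{r,t}](a)$; just make sure the write-up reflects this order.
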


We consider now a Banach space $\uU$ with a basis 
$\Set{e_i}_{i\in I}$ whose elements are then indiced by $I$. 
Now, let $\bx$ be a $\uT_{\leq 2}(\uU)$-valued $p$-rough path with $2\leq p<3$
We consider
a family $\Set{f_i}_{i\in I}$ of functions which are regular enough.

The \emph{Davie approximation} is the almost flow (considered first by A.M.~Davie \cite{davie05a})
defined by 
\begin{equation}
    \label{eq:davie:3}
    \phi_{t,s}(a):=a+\sum_{i\in I}f_i(a)\bx^{(1),i}_{s,t}+\sum_{i,j\in I}\bx^{(2),ij}_{s,t}\dD f_j(a)\cdot f_i(a)
    =F[\bx_{s,t}](a).
\end{equation}
which serves as an approximation to the RDE usually written $z_t=a+\int_0^t f(z_s)\vd\bx_s$, 
which actually means $z_t=a+\int_0^t F[\vd \bx_s](y_s)$. 

The proof of the following theorem is only a rewriting using the formalism of trees.
This is a result similar to the one of T.~Cass and M.~Weidner in \cite{1604.07352} and
the ones of I.~Bailleul in \cite{bailleul20a} where it is shown that the Davie's formalism 
is suitable for branched rough paths. 
With \cite[Proposition~3.8, p.~228]{hairer_kelly}, this leads to the same construction
(only at the first level) of the RDE driven by branched rough paths introduced by \cite{gubinelli10a},
yet with a simpler formulation.

\begin{proposition}
    \label{prop:B1}
    The $\cW_{\leq 2}$-valued family $\tau$ defined by 
    \begin{equation*}
	\btau_{s,t}:=
	1+\sum_{i\in I}\bx^{(1),i}_{s,t}\bullet_i+\sum_{i,j\in I}\bx^{(2),ij}_{s,t}\Graft{\bullet_i}_j.
    \end{equation*}
    is a $\cW_{\leq 2}$-valued $p$-rough path. The Davie approximation defined by~\eqref{eq:davie:3} and rewritten  as
    \begin{equation*}
	\phi_{t,s}(a)=F[\btau_{s,t}](a)
	\text{ for }a\in\uV,\ (s,t)\in\rTT^2
    \end{equation*}
    defines an almost flow. 
\end{proposition}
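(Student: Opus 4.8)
The plan is to verify the two assertions in the statement separately: first that $\btau_{s,t}$ is a $\cW_{\leq 2}$-valued $p$-rough path in the sense of the earlier definition, and then that the resulting family $\phi_{t,s}(a) = F[\btau_{s,t}](a)$ is an almost flow. Once both are established, the existence of a D-solution satisfying the stated estimate follows directly from Proposition~\ref{prop:GDE} (or rather its analogue for the present $\cW$), so the actual work is entirely in the two verifications.

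For the rough path claim, I would unwind the definition: the grading-$i$ components are $\btau^{(0)}_{s,t}=1$, $\btau^{(1)}_{s,t}=\sum_i \bx^{(1),i}_{s,t}\bullet_i$ (one node) and $\btau^{(2)}_{s,t}=\sum_{i,j}\bx^{(2),ij}_{s,t}\Graft{\bullet_i}_j$ (two nodes). The estimates $\abs{\btau^{(k)}_{s,t}}\leq \mu_k\omega_{s,t}^{k/p}$ are inherited immediately from the corresponding bounds on $\bx^{(1)}$ and $\bx^{(2)}$, since the map sending tensor components to the appropriate trees is linear and bounded. The real content is the multiplicativity (Chen) relation $\btau_{r,s}\cdot\btau_{s,t}=\btau_{r,t}$ in $\cW_{\leq 2}$, i.e. modulo $\cW_{>2}$. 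Here I would compute $\btau_{r,s}\prodG\btau_{s,t}$ using the explicit non-associative tree product defined just above: the degree-$1$ part is $\btau^{(1)}_{r,s}+\btau^{(1)}_{s,t}$, which matches $\btau^{(1)}_{r,t}$ because $\bx^{(1)}$ is additive; the degree-$2$ part collects $\btau^{(2)}_{r,s}+\btau^{(2)}_{s,t}$ together with the cross term $\btau^{(1)}_{r,s}\cdot\btau^{(1)}_{s,t}=\sum_{i,j}\bx^{(1),i}_{r,s}\bx^{(1),j}_{s,t}\,(\bullet_i\cdot\bullet_j)=\sum_{i,j}\bx^{(1),i}_{r,s}\bx^{(1),j}_{s,t}\Graft{\bullet_i}_j$, and one checks this equals $\btau^{(2)}_{r,t}$ using exactly the Chen relation $\bx^{(2)}_{r,t}=\bx^{(2)}_{r,s}+\bx^{(2)}_{s,t}+\bx^{(1)}_{r,s}\otimes\bx^{(1)}_{s,t}$ for the tensor rough path $\bx$, noting that $\Graft{\bullet_i}_j$ is the tree image of $e_i\otimes e_j$ in the correct order. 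This index-matching — making sure the order of $i$ and $j$ in $\Graft{\bullet_i}_j$ is consistent between the definition of the tree product, the elementary differential $F[\Graft{\bullet_i}_j]=\dD f_j\cdot f_i$, and the Davie approximation \eqref{eq:davie:3} — is the one place where a sign or transposition error could creep in, so that is the step to do carefully.

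For the almost-flow claim, I would simply observe that $F$ restricted to trees is a Newtonian map with $\cL=\cW$ (the lemma stated just before Example~\ref{ex:bseries}), that $F[\btau_{s,t}]=\id + \sum_i\bx^{(1),i}_{s,t}f_i + \sum_{i,j}\bx^{(2),ij}_{s,t}\dD f_j\cdot f_i$ by definition of the elementary differentials, which is precisely the Davie approximation \eqref{eq:davie:3}, and then invoke the machinery of Section~\ref{sec:flow}: with $\blambda_{s,t}=\logG(\btau_{s,t})$ lying in $\cL_{\leq 2}=\cW_{\leq 2}$ (automatic since $\cL=\cW$), the regularity hypotheses on the $f_i$ (enough to make $F[\tau]$ of class $\cC^1$ for $\tau\in\cW_{\leq 2}$, hence $f_i\in\cC^2$) give Hypothesis~\ref{hyp:305}-type Lipschitz/sup bounds, and the composition formula yields the almost-flow estimate \eqref{eq:af} with $\theta=3/p>1$ since $p<3$. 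Concretely: $\phi_{t,t}=\ci$ because $\btau_{t,t}=1$; the sup and Lipschitz bounds on $\phi_{t,s}-\ci$ follow from the bounds on $\bx^{(1)},\bx^{(2)}$ and the regularity of the $f_i$; and the defect $\phi_{t,s}\circ\phi_{s,r}-\phi_{t,r}$ is controlled exactly as in the proof of Proposition~\ref{prop:GDE}, using that $F[\btau_{r,s}]\circ F[\btau_{s,t}]-F[\btau_{r,t}]$ picks up only remainder terms of order $\omega_{r,t}^{3/p}$ because $\btau_{r,s}\cdot\btau_{s,t}$ and $\btau_{r,t}$ agree in $\cW_{\leq 2}$ and differ only in $\cW_{>2}$.

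In short, the proof is a bookkeeping exercise: the rough-path property of $\btau$ is a direct translation of the Chen relation for $\bx$ through the linear map into trees, and the almost-flow property is an immediate application of the general results of Section~\ref{sec:flow} once one notes that $F[\btau_{s,t}]$ coincides with the classical Davie approximation. The only genuine obstacle — and it is a minor one — is keeping the decoration indices of the grafted trees $\Graft{\bullet_i}_j$ in the order that makes the tree product, the elementary differential, and the tensor Chen relation all consistent; everything else is routine.
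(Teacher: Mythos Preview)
Your proposal is correct and follows essentially the same route as the paper: verify the Chen relation for $\btau$ in $\cW_{\leq 2}$ by expanding $\btau_{r,s}\prodG\btau_{s,t}$, use $\bullet_i\cdot\bullet_j=\Graft{\bullet_i}_j$ together with the Chen relation $\bx^{(2)}_{r,t}=\bx^{(2)}_{r,s}+\bx^{(2)}_{s,t}+\bx^{(1)}_{r,s}\otimes\bx^{(1)}_{s,t}$ for the degree-$2$ term, note that the size bounds are inherited from those on $\bx$, and then defer the almost-flow property to the general machinery of Section~\ref{sec:flow}. The paper's own proof is in fact briefer than yours (it only writes out the multiplicativity check and declares the size bounds ``easily obtained''), so you have simply spelled out what the paper leaves implicit; the one small wobble is that you invoke $\blambda_{s,t}=\logG(\btau_{s,t})$ and Proposition~\ref{prop:GDE} (the ODE-based almost flow) when the object you actually need to control is $F[\btau_{s,t}]$ itself --- for that the relevant result is the unnamed proposition immediately after Proposition~\ref{prop:301}, which asserts that $\psi_{t,s}:=F[\bx_{s,t}^{(\leq n)}]$ is an almost flow close to the ODE-based one --- but your closing sentence about $F[\btau_{r,s}]\circ F[\btau_{s,t}]-F[\btau_{r,t}]$ being of order $\omega_{r,t}^{3/p}$ is exactly the right direct argument, so no genuine gap.
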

\begin{proof}
    In $\cW_{\leq 2}$
    (recall that $\prodG$ is nothing more than $\cdot$ in the quotient space $\cW_{\leq 2}$), 
    \begin{equation*}
	\btau_{r,s}\prodG\btau_{s,t}
	=1+\sum_{i\in I}\bullet_i \bx_{r,t}^{(1),i}
	+\sum_{i,j\in I}\Graft{\bullet_j}_i(\bx^{(2),ij}_{r,s}+\bx^{(2),ij}_{s,t})
	+\sum_{i,j\in I}\bullet_i\cdot\bullet_j \bx^{(1),i}_{r,s}\bx^{(2),j}_{s,t},
    \end{equation*}
    since $\bx^{(1)}_{r,s}+\bx^{(1)}_{s,t}=x^{1}_{r,t}$. Regarding the term of order $2$, 
    $\bx^{(2)}_{r,t}=\bx^{(2)}_{r,s}+\bx^{(2)}_{s,t}+\bx^{(1)}_{r,s}\otimes\bx^{(1)}_{s,t}$
    in $\uT_{\leq 2}(\uU)$. We check that $\bullet_i\cdot\bullet_j=\Graft{\bullet_i}_j$. 
    Thus, $\btau$ is multiplicative in $\cW_{\leq 2}$. 

    The controls over $\abs{\btau^{(i)}_{s,t}}$ are easily obtained.
\end{proof}
The main implication of Proposition~\ref{prop:B1} is the convergence of higher-order
numerical schemes constructed from B-series, also called Runge-Kutta schemes.
\begin{corollary}
    Let $n\geq 3$.
    We consider $\btau$ given in Proposition~\ref{prop:B1}.
    Any $\cW_{\leq n}$-valued $p$-rough path $\bsigma$ with 
    $\bsigma^{(\leq 2)}_{s,t}=\btau_{s,t}$ for any $(s,t)\in\rTT^2$
    gives rise to the same flow as the one that stems from the approximation $\Set{F[\btau_{s,t}]}_{(s,t)\in\rTT^2}$
\end{corollary}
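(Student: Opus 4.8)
The plan is to exploit the stability of the rectification procedure in the non-linear sewing lemma: two almost flows differing by $C\omega_{s,t}^{\theta}$ with $\theta>1$ are rectified to the same flow (this is the content of the perturbation results of \cite{brault1,brault2}, already invoked in Section~\ref{sec:flow}). So it suffices to compare the almost flow attached to $\bsigma$ with the Davie almost flow $\Set{F[\btau_{s,t}]}$ of Proposition~\ref{prop:B1}.

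First I would split off the higher levels. Since $\bsigma^{(\leq 2)}_{s,t}=\btau_{s,t}$, we have $\bsigma^{(\leq n)}_{s,t}=\btau_{s,t}+\sum_{i=3}^{n}\bsigma^{(i)}_{s,t}$, and the rough-path bound~\eqref{eq:Wrp:1} gives $\abs{\bsigma^{(i)}_{s,t}}\leq\mu_i\omega_{s,t}^{i/p}$ for $i=3,\dots,n$. Using that $F$ is linear and that the B-series algebra satisfies $\cL=\cW$, Hypothesis~\ref{hyp:305} applies level by level, so that
\[
\normsup{F[\bsigma^{(\leq n)}_{s,t}]-F[\btau_{s,t}]}\leq\sum_{i=3}^{n}\nu_i\,\abs{\bsigma^{(i)}_{s,t}}\leq C\,\omega_{s,t}^{3/p}.
\]
Because we are in the regime $2\leq p<3$ of Proposition~\ref{prop:B1}, the exponent $3/p$ is strictly larger than $1$; this is the only spot where the hypothesis $p<3$ (equivalently $n\geq 3$ over a level-$2$ rough path) is really used.

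Next I would check that $\Set{F[\bsigma^{(\leq n)}_{s,t}]}$ is itself an almost flow. Since $n\geq 3>\floor{p}$ and $\logG(\bsigma_{s,t})\in\cL_{\leq n}=\cW_{\leq n}$, Hypotheses~\ref{hyp:304} and~\ref{hyp:303} hold automatically with $F$ the elementary differential map, so the $\cW_{\leq n}$-valued rough-path construction of Section~\ref{sec:BRP} applies; equivalently, the ODE flow $y[\logG(\bsigma_{s,t})]_1$ is an almost flow by Proposition~\ref{prop:GDE} and, by the Taylor formula with remainder (taken with $\beta=1$) together with Hypothesis~\ref{hyp:305} and Lemma~\ref{lem:12}, differs from $F[\bsigma^{(\leq n)}_{s,t}]$ by $C\,\omega_{s,t}^{(n+1)/p}$ with $(n+1)/p>1$. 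By Proposition~\ref{prop:B1}, $\Set{F[\btau_{s,t}]}$ is an almost flow as well.

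Finally, the two almost flows $\Set{F[\bsigma^{(\leq n)}_{s,t}]}$ and $\Set{F[\btau_{s,t}]}$ differ by $C\omega_{s,t}^{3/p}$ with $3/p>1$, so by the perturbation results for almost flows \cite{brault1,brault2} they are rectified to one and the same flow; since this rectified flow is precisely the flow stemming from $\bsigma$ on the one hand and from $\Set{F[\btau_{s,t}]}$ on the other, the corollary follows. I do not expect a genuine obstacle: the comparison is a one-line linearity estimate, and the substantive input — invariance of the rectified flow under an $\omega^{\theta}$-perturbation with $\theta>1$ — is already available from the non-linear sewing lemma. The only point demanding attention is bookkeeping of the exponents, in particular using $p<3$ to secure $3/p>1$.
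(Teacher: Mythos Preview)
The paper states this corollary without an explicit proof, treating it as immediate from Proposition~\ref{prop:B1} and the perturbation results for almost flows invoked in Section~\ref{sec:flow}; your argument supplies precisely those details and is correct. The key step --- bounding $\normsup{F[\bsigma^{(\leq n)}_{s,t}]-F[\btau_{s,t}]}$ by $C\omega_{s,t}^{3/p}$ with $3/p>1$ via linearity and Hypothesis~\ref{hyp:305}, then invoking invariance of the rectified flow under $\omega^{\theta}$-perturbations with $\theta>1$ --- is exactly the intended route.
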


Finally, in $\cW_{\leq 2}$, 
\begin{equation*}
    \log(\btau_{s,t})
    =
    \sum_{i\in I}\bullet_i \bx_{r,t}^{(1),i}
    +\sum_{i,j\in I}\Graft{\bullet_j}_i(\bx^{(2),ij}_{r,s}+\bx^{(2),ij}_{s,t})
    -\frac{1}{2}\sum_{i,j\in I}\bx^{(1),i}\bx^{(1),j}\Graft{\bullet_i}_j.
\end{equation*}

The next lemma is a rewriting in the context of branched trees on the 
definition of weak geometric rough paths \cite{lyons02b,friz,lejay_victoir}.

\begin{lemma}
    \label{lem:B1}
    A weak geometric $p$-rough path is a rough path $\bx$ such 
    that $\bx_{s,t}=\exp(\blambda_{s,t})$ where for any $(s,t)\in\rTT^2$, 
    $\blambda_{s,t}$
    belongs to the Lie algebra generated by the Lie brackets in $\uT_{\leq n}(\uU)$.
    In $\cW_{\leq 2}$, 
    \begin{equation}
	\label{eq:B1}
	\bx_{s,t}=1+\bx^{(1)}_{s,t}+\frac{1}{2}\bx^{(1)}_{s,t}\otimes\bx^{(1)}_{s,t}
	+\ba_{s,t}
    \end{equation}
    where $\ba_{s,t}\in\uU\otimes\uU$ is anti-symmetric, that is $\ba^{ij}_{s,t}=-\ba^{ji}_{s,t}$
    for any $i,j\in I$.
\end{lemma}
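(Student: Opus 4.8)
The plan is to reduce everything to a degree-$\leq 2$ computation in the truncated tensor algebra $\cW_{\leq 2}=\uT_{\leq 2}(\uU)$, using the explicit description of the low-degree part of the free Lie algebra. First I would record the linear-algebra fact that the Lie algebra generated by the brackets $\lie{e_i,e_j}=e_i\otimes e_j-e_j\otimes e_i$, once projected onto $\cW_{\leq 2}$, equals $\uU\oplus\mathsf{A}$, where $\mathsf{A}\subset\uU\otimes\uU$ is the subspace of anti-symmetric tensors: every $\sum_{i,j}a^{ij}e_i\otimes e_j$ with $a^{ij}=-a^{ji}$ is $\sum_{i<j}a^{ij}\lie{e_i,e_j}$, and conversely each $\lie{e_i,e_j}$ is anti-symmetric, so $\mathsf{A}=\lie{\uU,\uU}$. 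I would also note that products of grading $\geq 3$ vanish in $\cW_{\leq 2}$ and that $\prodG$ there coincides with the truncated tensor product, so $\expG$ and $\logG$ are polynomials of degree $\leq 2$ and no convergence issue arises.

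For the forward direction I would decompose the projection of $\blambda_{s,t}$ onto $\cW_{\leq 2}$ as $\blambda_{s,t}=\blambda^{(1)}_{s,t}+\blambda^{(2)}_{s,t}$ with $\blambda^{(1)}_{s,t}\in\uU$ and $\blambda^{(2)}_{s,t}\in\mathsf{A}$, and then expand
\[
\bx_{s,t}=\exp(\blambda_{s,t})=1+\blambda^{(1)}_{s,t}+\Bigl(\blambda^{(2)}_{s,t}+\tfrac12\,\blambda^{(1)}_{s,t}\otimes\blambda^{(1)}_{s,t}\Bigr).
\]
Matching homogeneous components gives $\bx^{(1)}_{s,t}=\blambda^{(1)}_{s,t}$ and $\bx^{(2)}_{s,t}=\tfrac12\,\bx^{(1)}_{s,t}\otimes\bx^{(1)}_{s,t}+\blambda^{(2)}_{s,t}$, so \eqref{eq:B1} holds with $\ba_{s,t}:=\blambda^{(2)}_{s,t}$, which is anti-symmetric by construction.

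For the converse I would start from \eqref{eq:B1} with $\ba_{s,t}$ anti-symmetric, set $v:=\bx^{(1)}_{s,t}$, and compute $\logG$ to grading $2$: since the only surviving grading-$2$ term of the square is $v\otimes v$, one gets $\logG(\bx_{s,t})=\bigl(v+\tfrac12\,v\otimes v+\ba_{s,t}\bigr)-\tfrac12\,v\otimes v=v+\ba_{s,t}$, which lies in $\uU\oplus\mathsf{A}=\uU\oplus\lie{\uU,\uU}$, hence in the truncated Lie algebra; so $\bx_{s,t}$ is the level-$\leq 2$ part of a weak geometric rough path. The only genuinely nontrivial step is the identification $\mathsf{A}=\lie{\uU,\uU}$ in the first paragraph, and even that is elementary; everything else is bookkeeping with terminating series, so I do not anticipate a real obstacle here.
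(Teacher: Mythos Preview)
Your argument is correct. The paper does not actually give a proof of this lemma: it is introduced as ``a rewriting in the context of branched trees on the definition of weak geometric rough paths'' and is stated without justification, the content being regarded as essentially definitional. What you have written is precisely the elementary degree-$\leq 2$ computation that underlies this rewriting --- identifying the grading-$2$ part of the truncated Lie algebra with the anti-symmetric tensors via $\lie{\uU,\uU}=\mathsf{A}$, and then reading off $\expG$ and $\logG$ as degree-$2$ polynomials --- so there is nothing to compare against and no gap to flag.
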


It follows from Lemma~\ref{lem:B1} that if $\btau_{s,t}$ proceeds from a weak geometric $\uT_{\leq 2}(\uU)$-valued
rough path as in \eqref{eq:B1}, then 
\begin{equation*}
    \blambda_{s,t}:=\logG(\btau_{s,t})=\bx^{(1)}_{s,t}+\sum_{i,j\in I}\Graft{\bullet_j}_i\ba^{ij}_{s,t}.
\end{equation*}
One recovers the construction from I.~Bailleul \cite{bailleul12a,bailleul13b} when one considers 
as an almost flow $\Set{\Phi[\blambda_{s,t}]}_{(s,t)\in\rTT^2}$.

Let us end this section by noting that 
for any $\cW_{\leq n}$-valued $p$-rough path $\btau$, 
it is always possible to compute $\blambda_{s,t}=\logG(\btau_{s,t})$, 
whether or not the path is a  weak geometric rough path. This has to 
be connected with the results of M.~Hairer and D.~Kelly \cite{hairer_kelly}.


\subsection{Aromatic rough paths}

The notion of aromatic tree extends the one of rooted trees.
Aromatic trees were introduced in \cite{MT2016,MR3510021}
as an extension of B-series to study equivariant numerical schemes.

\begin{definition}[Directed graph]
    A \emph{directed graph} $g$ is defined as a finite set of vertices $V(g)$ and edges $E(g)\subset V(g)\times V(g)$. 
    We allow the empty graph $\emptyset$ with no vertices.
    Two graphs $g$ and $h$ are \emph{equivalent} is there exists a one-to-one map $f:V(g)\to V(h)$
    such that $(f\times f)(E(g))=E(h)$.
\end{definition}
\begin{definition}[Aromatic forest]
    An \emph{aromatic forest} is an equivalence class of directed graphs where 
    each node has at most one outgoing edge. A \emph{root} of an aromatic forest
    is a node with no outgoing edge. 
\end{definition}

A directed graph may have several connected components.
An aromatic forest is then an unordered collection of connected components,
each one with exactly one root (\emph{rooted tree}) or no root so that it contains
one cycle called an \emph{aroma}. This is how it differs from the construction
of Section~\ref{sec:BRP}. A grading is naturally induced by the number 
of vertices.

We now consider that $\uV:=\RR^m$. Let $I$ be a set of letters. 
To each letter $\alpha\in I$, we associate 
a function $f[\alpha]:\uV\to\uV$ regular enough.
The $i$-th component of $f[\alpha]$ is written $f^i[\alpha]$, $i=1,\dotsc,m$.

\begin{definition}[Elementary differential operator]
    We consider a $I$-decorated aromatic forest $\tau$. 
    We associate with $\tau$ a differential operator $F^\dag[\tau]$ constructed the following way:
    \begin{itemize}[noitemsep,topsep=-\parskip,partopsep=0pt]
	\item $F^\dag[\emptyset]$ is the identity operator. 
	\item Each vertex $v$ of $\tau$ is also associated with some
	    dummy indice $i\in\Set{1,\dotsc,m}$
	    and thus decorated by $(\alpha,i)\in I\times\Set{1,\dotsc,m}$.
	\item Let $v$ be a vertex of $\tau$ with $s$ ingoing edges 
	    from the vertices labelled by $(\alpha_1,i_1),\dotsc,(\alpha_s,i_s)$.
	    We associate the factor
	    \begin{equation*}
		f(\alpha,i,i_1,\dotsc,i_s):=\frac{\partial^s f^{i}[\alpha]}{\partial x_{i_1}\dotsb \partial x_{i_s}}.
	    \end{equation*}
	    If the vertex has no ingoing edge, then $f(\alpha,i):=f^i[\alpha]$. 
	\item To each root of $\tau$ labelled by $(\alpha,i)$, 
	    we associate the factor $f(\alpha,i):=\frac{\partial}{\partial x_i}$.
	\item To construct $F^\dag[\tau]$,
	    we multiply the factors and we sum according the Einstein summation convention, 
	    \textit{i.e.}, we sum over each dummy indice appearing twice, 
	    lower (differentiation) and upper (choice of component). 
    \end{itemize}
\end{definition}

The order of the differential operator $F^\dag[\tau]$ depends on the number of roots
in general.

\begin{example}
    The tree $\bullet_{\alpha}$ which gives rise to the vector field $F^\dag[\bullet_\alpha]:=\sum_{i=1}^m f^i[\alpha] \frac{\partial}{\partial x_i}$,
    while
    \begin{equation*}
	\Graft{\bullet_\alpha}_{\beta} \text{ gives rise to }
	F^\dag[\Graft{\bullet_\alpha}_{\beta}]:=
	\sum_{i,j=1}^m f^i[\alpha]\frac{\partial f^j[\beta]}{\partial x_i}\frac{\partial}{\partial x_j}
    \end{equation*}
    and
    \begin{equation*}
	\Graft{\bullet_\alpha \bullet_\gamma}_{\beta} \text{ gives rise to }
	F^\dag[\Graft{\bullet_\alpha \bullet_\gamma}_{\beta}]:=
	\sum_{i,j,k=1}^m f^i[\alpha]f^j[\gamma]\frac{\partial^2 f^k[\beta]}{\partial x_i\partial{x_j}}\frac{\partial}{\partial x_k}.
    \end{equation*}
\end{example}

\begin{example}
    The aromatic forest $\bullet_\alpha\bullet_\beta$
    gives rise to the second-order differential operator
    $F^\dag[\bullet_\alpha\bullet_\beta]=\sum_{i,j} f^i[\alpha]f^j[\beta]\frac{\partial^2}{\partial x_i\partial{x_j}}$.
\end{example}

\begin{example}
    An \emph{aroma} is a loop with a single edge both ingoing and outgoing.
An aroma with a single vertex is associated with the divergence $\sum_i \frac{\partial f_i[\alpha]}{\partial x_i}$. It is a differential operator of order $0$.
\end{example}

\begin{remark}
    \label{rem:aromatic:1}
    Elementary differentials which are differential operators of first order 
    are encoded by aromatic forest with a single root.
\end{remark}

\begin{definition}[Composition of $I$-decorated aromatic forest]
    Given two aromatic $I$-decorated aromatic forests, we define their products $\sigma\cdot\tau$ 
    as the sum of the graphs~$\gamma$ obtained the following way:
    \begin{itemize}[noitemsep,topsep=-\parskip,partopsep=0pt]
	\item We superpose $\sigma$ and $\tau$, that is we construct a graph whose edges (resp. vertices)
	    is obtained by the union of the edges (resp. vertices) of $\sigma$ and $\tau$.
	\item We form new graphs by considering all the possible 
	    way to add edges from roots of $\sigma$ to nodes of $\tau$. 
    \end{itemize}
\end{definition}

As the product of $\sigma$ and $\tau$ consists in adding all graphs obtained by 
considering all possible edges outgoing from the roots of $\sigma$, 
one may check that the product is associative (but not commutative). 
Besides, the Lie brackets between two elements each with one root 
as one root.

\begin{example}
    The product of $\sigma=\Graft{\bullet_\alpha}_\beta$ with 
    $\tau=\Graft{\bullet_\gamma}_{\delta}$ is 
    \begin{equation*}
\sigma\cdot \tau
=
	\Graft{\bullet_\alpha}_\beta\Graft{\bullet_\gamma}_{\delta}
	+
	\Graft{\Graft{\Graft{\bullet_\alpha}_\beta}_\gamma}_{\delta}
	+
	\Graft{\Graft{\bullet_\alpha}_\beta,\bullet_\gamma}_{\delta}.
    \end{equation*}
    The corresponding differential operator is
    \begin{multline*}
	F^\dag[\sigma\cdot\tau]
	=
	\sum_{i,j,k,\ell=1}^m f^i[\alpha]\frac{\partial f^j[\beta]}{\partial x_i}
	f^k[\gamma]\frac{\partial f^\ell[\delta]}{\partial x_k}
	\frac{\partial^2}{\partial x_j\partial x_\ell}
	\\
	+\sum_{i,j,k,\ell=1}^m
	f^i[\alpha]\frac{\partial f^j[\beta]}{\partial x_i}
	\frac{\partial f^k[\gamma]}{\partial x_j}\frac{\partial f^\ell[\delta]}{\partial x_k}
	\frac{\partial}{\partial x_\ell}
	+
	\sum_{i,j,k,\ell=1}^m
	f^i[\alpha]\frac{\partial f^j[\beta]}{\partial x_i}
	f^k[\gamma]\frac{\partial^2 f^\ell[\delta]}{\partial x_j\partial x_k}
	\frac{\partial}{\partial x_\ell}.
    \end{multline*}
\end{example}

The product rule (see \cite[Definition~3.7, p.~205]{bogfjellmo}) is
the one that corresponds to the product of two elementary differentials. 
The next lemma follows from \cite[Lemma~3.8, p.~205]{bogfjellmo}
and Remark~\ref{rem:aromatic:1} above.

\begin{lemma} 
   Let $\cL$ be the $\RR$-submodule freely generated by $I$-decorated rooted trees with one root.
    Assume that $F^\dag[\alpha]\ci$ is Lipschitz continuous for any $\alpha\in\cL$
    and $F^\dag[\beta]$ of coefficients of class $\cC^1$ for any $\beta\in\cW$
    where $\cW$ is the set of all $I$-decorated trees. 
    Then~$F^\dag$ is a Newtonian operator.
\end{lemma}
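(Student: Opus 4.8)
The plan is to verify directly the four clauses in the definition of a Newtonian operator for the map $F^\dag$ built from elementary differentials of $I$-decorated aromatic forests; only the last (multiplicativity) clause requires any real work, and for it I would appeal to the product rule for aromatic B-series recalled above, i.e. to \cite[Lemma~3.8, p.~205]{bogfjellmo}.

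First I would record that the ambient structure satisfies Hypotheses~\ref{hyp:100} and~\ref{hyp:101}. Here $\cW$ is the $\RR$-module freely generated by $I$-decorated aromatic forests, graded by the number of vertices; the composition product superposes the two forests (so vertex counts add) and then adds edges outgoing from the roots of the left factor, so it respects the grading and, as noted above, is associative with unit the empty forest $\emptyset\in\cW_0\simeq\RR$. The submodule $\cL$ spanned by the single-rooted forests is homogeneous, hence stable under the projections $\alpha\mapsto\alpha^{(i)}$; set $\cL_{\leq n}=\cL\cap\cW_{\leq n}$.

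Next I would dispatch the first three clauses. The identity $F^\dag[1]=F^\dag[\emptyset]=1$ holds by construction, and $F^\dag[\beta]$ has coefficients of class $\cC^1$ for every $\beta\in\cW_{\leq n}$ by hypothesis. For $\alpha\in\cL_{\leq n}$, Remark~\ref{rem:aromatic:1} gives that $F^\dag[\alpha]$ is a first-order differential operator; moreover its unique root contributes exactly one differentiation factor $\partial/\partial x_i$ while all the remaining factors are pure coefficients, so there is no zero-order term, and therefore $F^\dag[\alpha]g=\dD g\cdot F[\alpha]$ with $F[\alpha]:=F^\dag[\alpha]\ci$ Lipschitz continuous by assumption. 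It then remains to prove $F^\dag[\alpha\beta]g=\dD(F^\dag[\beta]g)\cdot F[\alpha]=F^\dag[\alpha]F^\dag[\beta]g$ for $\alpha\in\cL_{\leq n}$, $\beta\in\cW_{\leq n}$ and any suitable function $g$.

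For this last point I would expand $F^\dag[\alpha](F^\dag[\beta]g)=\dD(F^\dag[\beta]g)\cdot F[\alpha]$ by the Leibniz rule: differentiating $F^\dag[\beta]g$ along the vector field carried by the root $v$ of $\alpha$ either lands on one of the coefficients of $F^\dag[\beta]$, which is exactly the graph obtained by grafting $v$ onto the corresponding vertex of $\beta$, or raises the order of a derivative of $g$, which is exactly the superposed graph in which $v$ remains a root; summing over all these terms is, by the very definition of the aromatic product, $F^\dag[\alpha\beta]g$. This is the product rule \cite[Definition~3.7, p.~205]{bogfjellmo} specialised to a single-rooted left factor, and its validity for general factors — hence the operator identity $F^\dag[\alpha\beta]=F^\dag[\alpha]F^\dag[\beta]$ — is \cite[Lemma~3.8, p.~205]{bogfjellmo}. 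Well-posedness is not an issue, since $F[\alpha]$ is Lipschitz and $F^\dag[\beta]$ has $\cC^1$ coefficients, so $\dD(F^\dag[\beta]g)\cdot F[\alpha]$ is continuous for every suitable $g$. The one genuinely delicate point is precisely this term-by-term matching of the Leibniz expansion with the graphs produced by the product; but, as indicated, it is already carried out in \cite{bogfjellmo}, so the argument reduces to assembling these facts once the first-order reduction of Remark~\ref{rem:aromatic:1} is in place.
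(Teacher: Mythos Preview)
Your proposal is correct and follows the same route as the paper: the paper states that the lemma \textquote{follows from \cite[Lemma~3.8, p.~205]{bogfjellmo} and Remark~\ref{rem:aromatic:1}}, and you invoke precisely these two ingredients, supplying the straightforward checks of the remaining clauses in the definition of a Newtonian operator. Your write-up is more explicit than the paper's one-line justification, but the argument is identical.
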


It is then possible to extend the notion of RDE driven by branched rough
paths to RDE driven by aromatic rough paths.  

\appendix

\section{A Davie lemma}

\label{sec:davie}

There are several forms of the Davie lemma. The one presented here is a simplified form 
of the one in \cite{brault2} and of \cite[Lemma~10.59, p.~270]{friz} for the special case $\varpi(x)=x^\theta$, $\theta>1$.

\begin{lemma}[A Davie's lemma]
    \label{lem:davie}
    Let $\varpi$ be such that $\varpi(0)=0$ and $2\varpi(x)\leq \kappa\varpi(2x)$ for some $\kappa<1$.
    Let $\Set{U_{s,t}}_{0\leq s\leq t\leq T}$ be a $\RR_+$-valued family 
    as well as $E,M\geq 0$ such that 
    \begin{equation*}
	U_{s,t}\leq E\varpi(\omega_{s,t})\text{ and }
	U_{r,t}\leq U_{r,s}+U_{s,t}+M\varpi(\omega_{r,t})
    \end{equation*}
    for any $0\leq r\leq s\leq t\leq T$.  Then 
    $U_{r,t}\leq M/(1-\varkappa)\varpi(\omega_{r,t})$. 
\end{lemma}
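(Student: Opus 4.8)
The plan is to run the standard dyadic (binary subdivision) argument of Davie. First I would fix $0 \le r \le t \le T$ and, for $N \ge 1$, introduce the uniform partition $r = t_0 < t_1 < \dots < t_{2^N} = t$ obtained by $N$ successive bisections, with the natural convention that one works with the control $\omega$ rather than time. For a single bisection $r \le m \le t$ the superadditivity $\omega_{r,m} + \omega_{m,t} \le \omega_{r,t}$ is what drives everything; one wants a point $m$ that splits the $\omega$-mass roughly in half, and by continuity of $\omega$ near the diagonal such a point exists (this is where the regularity hypothesis on $\omega$ is used). The key inequality to iterate is $U_{r,t} \le U_{r,m} + U_{m,t} + M\varpi(\omega_{r,t})$.

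The core estimate: iterating the subadditivity defect down to a partition of mesh $2^{-N}\omega_{r,t}$ (in the $\omega$-sense) gives
\begin{equation*}
    U_{r,t} \le \sum_{i=0}^{2^N-1} U_{t_i,t_{i+1}} + M \sum_{k=0}^{N-1} 2^k \varpi\!\bigl(2^{-k}\omega_{r,t}\bigr).
\end{equation*}
For the first sum, the bound $U_{s,t} \le E\varpi(\omega_{s,t})$ together with $\omega_{t_i,t_{i+1}} \le 2^{-N}\omega_{r,t}$ and monotonicity of $\varpi$ shows it is at most $2^N E \varpi(2^{-N}\omega_{r,t})$. Now the hypothesis $2\varpi(x) \le \kappa\varpi(2x)$, i.e. $\varpi(x/2) \le (\kappa/2)\varpi(x)$, iterated $N$ times, yields $\varpi(2^{-N}\omega_{r,t}) \le (\kappa/2)^N \varpi(\omega_{r,t})$, so the first sum is bounded by $\kappa^N E \varpi(\omega_{r,t}) \to 0$ as $N \to \infty$ since $\kappa < 1$. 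For the geometric-type second sum, the same subdivision inequality gives $2^k\varpi(2^{-k}\omega_{r,t}) \le \kappa^k \varpi(\omega_{r,t})$, hence $\sum_{k\ge 0} 2^k\varpi(2^{-k}\omega_{r,t}) \le \varpi(\omega_{r,t})/(1-\kappa)$. Letting $N \to \infty$ leaves exactly $U_{r,t} \le \dfrac{M}{1-\kappa}\varpi(\omega_{r,t})$, which is the claim (with $\varkappa = \kappa$).

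The step I expect to be the main obstacle — and the only genuinely delicate point — is the existence of a good bisection point $m$ with $\omega_{r,m}$ and $\omega_{m,t}$ each at most (a constant times) $\frac{1}{2}\omega_{r,t}$: $\omega$ is only superadditive, not additive, so $t \mapsto \omega_{r,t}$ need not be continuous or take the intermediate value $\frac12\omega_{r,t}$ exactly. The clean fix is to not insist on an exact half but to choose $m$ so that $\omega_{r,m} \vee \omega_{m,t} \le \frac12\omega_{r,t}$ using a sup/inf argument on $\{u : \omega_{r,u} \le \frac12\omega_{r,t}\}$ together with the near-diagonal continuity of $\omega$; alternatively one reindexes the whole argument directly in terms of the nondecreasing function $u \mapsto \omega_{r,u}$. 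I would then remark that everything is carried out for fixed $r,t$ so no uniformity issues arise, and cite \cite{brault2} or \cite[Lemma~10.59]{friz} for the routine bookkeeping of this bisection step.
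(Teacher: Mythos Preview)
The paper provides no proof of this lemma: it is stated in the appendix as a simplified form of results in \cite{brault2} and \cite[Lemma~10.59, p.~270]{friz}, with the argument left entirely to those references. Your dyadic bisection argument is precisely the standard approach used there and is correct; your identification of the bisection-point subtlety (finding $m$ with both $\omega_{r,m}$ and $\omega_{m,t}$ at most $\tfrac12\omega_{r,t}$) and the sup-argument fix are the usual way this is handled. The only tacit assumption you import is that $\varpi$ is nondecreasing, which is not part of the stated hypotheses but is needed for the step $\varpi(\omega_{t_i,t_{i+1}})\le\varpi(2^{-N}\omega_{r,t})$ and is automatic in the intended case $\varpi(x)=x^\theta$, $\theta>1$.
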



\paragraph*{Acknowledgement.} The author thanks
Laure Coutin and Antoine Brault for interesting discussions related
to this article, and the referees for their valuable comments.



\begin{thebibliography}{00}

\bibitem{agrachev}
A.~A. Agra\v{c}ev and R.~V. Gamkrelidze.
\newblock Exponential representation of flows and a chronological enumeration.
\newblock \emph{Math. USSR Sb.}, 35:727--785, 1979.


\bibitem{bailleul12a}
I.~Bailleul.
\newblock Flows driven by rough paths.
\newblock \emph{Rev. Mat. Iberoamericana}, 31(3):901--934, 2015.

\bibitem{bailleul13b}
I. Bailleul.
\newblock Flows driven by Banach space-valued rough paths.
\newblock In \emph{Séminaire de Probabilités {XLVI}}, vol. 2123 of \emph{Lecture Notes in Math.}, pages 195--205. Springer, Cham, 2014.

\bibitem{bailleul15a}
I.~Bailleul and S.~Riedel.
\newblock Rough flows.
\newblock \emph{J. Math. Soc. Japan.}, 71(3):915--978, 2019.

\bibitem{bailleul20a}
I.~Bailleul.
\newblock On the definition of a solution to a rough differential equation.
\newblock \emph{Ann. Fac. Sci. Toulouse. Math (6)}, 30(3):463--478, 2021.

\bibitem{beauchard}
K. Beauchard, J. Le Borgne and F. Marbach. 
\newblock On expansions for non linear systems, error estimates and convergence issues.
\newblock arXiv \href{http://www.arxiv.org/2012.15653}{2012.15653}, 2018.

\bibitem{2002.10432}
C. Bellingeri, A. Djurdjevac, P.~K. Friz, and N. Tapia.
\newblock Transport and continuity equations with (very) rough noise.
\newblock \emph{Partial Differential Equations and Applications}, 2(49), 2021.

\bibitem{blanes}
S.~Blanes, F.~Casas, J.~A. Oteo, and J.~Ros.
\newblock The {M}agnus expansion and some of its applications.
\newblock \emph{Phys. Rep.}, 470(5-6):151--238, 2009.

\bibitem{boedihardjo16b}
H. Boedihardjo.
\newblock Factorial decay of iterated rough integrals.
\newblock arXiv \href{http://www.arxiv.org/1609.05368}{1609.05368}, 2016.

\bibitem{boedihardjo18a}
H. Boedihardjo.
\newblock Decay rate of iterated integrals of branched rough paths.
\newblock \emph{Ann. Inst. H. Poincaré Anal. Non Linéaire},
  35(4):945--969, 2018.

\bibitem{1712.01965}
H. Boedihardjo and I. Chevyrev.
\newblock An isomorphism between branched and geometric rough paths.
\newblock \emph{Ann. Inst. H. Poincaré Probab. Statist.}, 55(2):1131--1148,
  2019.

\bibitem{boedihardjo}
H. Boedihardjo, T. Lyons, and D. Yang.
\newblock Uniform factorial decay estimates for controlled differential
  equations.
  \newblock \emph{Electron. Commun. Probab.}, 20(94):1-‐11, 2015.

\bibitem{bogfjellmo}
G. Bogfjellmo.
\newblock Algebraic structure of aromatic {B}-series.
\newblock \emph{J. Comput. Dyn.}, 6(2):199--222, 2019.

\bibitem{bonfiglioli}
A. Bonfiglioli and R. Fulci.
\newblock \emph{Topics in noncommutative algebra: The theorem of Campbell, Baker, Hausdorff and Dynkin}, volume 2034 of \emph{Lecture Notes in Math.}.
\newblock Springer, Heidelberg, 2012.

\bibitem{boutaib}
Y. Boutaib,  L. G. Gyurkó, T. Lyons and  D. Yang.
\newblock Dimension-free {E}uler estimates of rough differential equations.
\newblock \emph{Rev. Roumaine Math. Pures Appl.}, 59(1):25--53, 2014.

\bibitem{bruned20a}
Y. Bruned
\newblock Renormalisation from non-geometric to geometric rough paths. 
\newblock, \emph{Ann. Inst. Henri Poincaré Probab. Stat.} (to appear),
\newblock arXiv \href{http://www.arxiv.org/2007.14385}{2007.14385}, 2020.


\bibitem{brault1}
A.~Brault and A.~Lejay.
\newblock The non-linear sewing lemma {I}: weak formulation.
\newblock \emph{Electron. J. Probability}, 24(59):1--24, 2019.

\bibitem{brault2}
A.~Brault and A.~Lejay.
\newblock The non-linear sewing lemma {II}: Lipschitz continuous formulation.
\newblock \emph{J. Differential Equations} 293:482–-519, 2021.

\bibitem{brault3}
A.~Brault and A.~Lejay.
\newblock The non-linear sewing lemma {III}: Stability and generic properties,
\newblock \emph{Forum Mathematicum},  32(5):1177–1197, 2020.

\bibitem{1801.02964}
Y. Bruned, C. Curry, and K. Ebrahimi-Fard.
\newblock Quasi-shuffle algebras and renormalisation of rough differential
  equations.
\newblock \emph{Bull. London Math. Soc.}, 52:43--63, 2020.

\bibitem{Butcher1972}
J.~C. Butcher.
\newblock An algebraic theory of integration methods.
\newblock \emph{Mathematics of Computation}, 26(117):79--79, January 1972.

\bibitem{1604.07352}
T. Cass and M.~P. Weidner.
\newblock Tree algebras over topological vector spaces in rough path theory.
\newblock arXiv \href{http://www.arxiv.org/1604.07352}{1604.07352}, 2016.

\bibitem{chen57}
K.-T. Chen.
\newblock Integration of paths, geometric invariants and a generalized
  Baker-Hausdorff formula.
\newblock \emph{Ann. of Math. (2)}, 65:163--178, 1957.

\bibitem{1804.08515}
C. Curry, K. Ebrahimi-Fard, D. Manchon, and H.~Z. Munthe-Kaas.
\newblock Planarly branched rough paths and rough differential equations on
  homogeneous spaces.
  \newblock \emph{J. Differ. Equations} 269(11):9740--9782, 2020. 

\bibitem{davie05a}
A.~M. Davie.
\newblock Differential equations driven by rough paths: an approach via
  discrete approximation.
\newblock \emph{Appl. Math. Res. Express. AMRX}, 2:Art. ID abm009, 40, 2007.

\bibitem{duistermaat}
J.~J. Duistermaat and J.~A.~C. Kolk.
\newblock \emph{Multidimensional real analysis. I. Differentiation},
  vol.~86 of \emph{Cambridge Studies in Advanced Mathematics}.
\newblock Cambridge University Press, Cambridge, 2004.

\bibitem{feyel}
D. Feyel, A. de~La~Pradelle, and G. Mokobodzki.
\newblock A non-commutative sewing lemma.
\newblock \emph{Electron. Commun. Probab.}, 13:24--34, 2008.

\bibitem{friz2008}
P. Friz and N. Victoir.
\newblock Euler estimates for rough differential equations.
\newblock \emph{J. Differ. Equations}, 244(2):388--412, 2008.

\bibitem{friz}
P.~K. Friz and N.~B. Victoir.
\newblock \emph{Multidimensional stochastic processes as rough paths}, volume
  120 of \emph{Cambridge Studies in Advanced Mathematics}.
\newblock Cambridge University Press, Cambridge, 2010.

\bibitem{grossman}
R.~L. Grossman and R.~G. Larson.
\newblock Differential algebra structures on families of trees.
\newblock \emph{Adv. in Appl. Math.}, 35(1):97--119, 2005.

\bibitem{gub04}
M. Gubinelli.
\newblock Controlling rough paths.
\newblock \emph{J. Funct. Anal.}, 216(1):86--140, 2004.

\bibitem{gubinelli10a}
M. Gubinelli.
\newblock Ramification of rough paths.
\newblock \emph{J. Differ. Equations}, 248(4):693--721, 2010.

\bibitem{hairer_10a}
E. Hairer, C. Lubich, and G. Wanner.
\newblock \emph{Geometric numerical integration (Structure-preserving algorithms for ordinary differential equations)}, vol.~31 of \emph{Springer
  Series in Computational Mathematics}.
\newblock Springer, Heidelberg, 2010.
\newblock Reprint of the second (2006) edition.

\bibitem{hairer_kelly}
M. Hairer and D. Kelly.
\newblock Geometric versus non-geometric rough paths.
\newblock \emph{Ann. Inst. Henri Poincaré Probab. Stat.}, 51(1):207--251,
  2015.

\bibitem{hara}
K. Hara and M. Hino.
\newblock Fractional order Taylor's series and the neo-classical inequality.
\newblock \emph{Bull. Lond. Math. Soc.}, 42(3):467--477, 2010.

\bibitem{lejay_victoir}
A. Lejay and N. Victoir.
\newblock On {$(p,q)$}-rough paths.
\newblock \emph{J. Differ. Equations}, 225(1):103--133, 2006.

\bibitem{lyons02b}
T. Lyons and Z. Qian.
\newblock \emph{System control and rough paths}.
\newblock Oxford Mathematical Monographs. Oxford University Press, Oxford,
  2002.

\bibitem{lyons07a}
T. Lyons and N. Victoir.
\newblock An extension theorem to rough paths.
\newblock \emph{Ann. Inst. H. Poincaré Anal. Non Linéaire},
  24(5):835--847, 2007.

\bibitem{lyons98a}
T.~J. Lyons.
\newblock Differential equations driven by rough signals.
\newblock \emph{Rev. Mat. Iberoamericana}, 14(2):215--310, 1998.


\bibitem{2002.05718}
G. Fløystad, D. Manchon, and H. Z. Munthe-Kaas.
\newblock The universal pre-Lie-Rinehart algebras of aromatic trees. 
\newblock arXiv \href{http://www.arxiv.org/2002.05718}{2002.05718}, 2020.

\bibitem{MR3510021}
R.~I. McLachlan, K. Modin, H. Munthe-Kaas, and O. Verdier.
\newblock B-series methods are exactly the affine equivariant methods.
\newblock \emph{Numer. Math.}, 133(3):599--622, 2016.

\bibitem{mostovoy17a}
J.~Mostovoy, J.~M. Pérez-Izquierdo, and I.~P. Shestakov.
\newblock A non-associative Baker-Campbell-Hausdorff formula.
\newblock \emph{Proc. Amer. Math. Soc.}, 145(12):5109--5122, 2017.

\bibitem{MT2016}
H. Munthe-Kaas and O. Verdier.
\newblock Aromatic Butcher series.
\newblock \emph{Found. Comput. Math.}, 16(1):183--215, 2016.

\bibitem{murua}
A.~Murua.
\newblock Formal series and numerical integrators. {I}. {S}ystems of {ODE}s and
  symplectic integrators.
\newblock \emph{Appl. Numer. Math.}, 29(2):221--251, 1999.

\bibitem{2003.12626}
M. Redmann and S. Riedel.
\newblock Runge-kutta methods for rough differential equations.
\newblock arXiv \href{http://www.arxiv.org/2003.12626}{2003.12626}, 2020.

\bibitem{reich}
S. Reich.
\newblock Backward error analysis for numerical integrators.
\newblock \emph{SIAM J. Numer. Anal.}, 36(5):1549--1570, 1999.

\bibitem{strichartz}
R.~S. Strichartz.
\newblock The Campbell-Baker-Hausdorff-Dynkin formula and solutions of
  differential equations.
\newblock \emph{J. Funct. Anal.}, 72(2):320--345, 1987.

\bibitem{1810.12179}
N. Tapia and L. Zambotti.
\newblock The geometry of the space of branched rough paths.
\newblock \emph{Proc. Lond. Math. Soc. (3)}, 121(2):220--251, 2020. 

\end{thebibliography}
\end{document}